\newtheorem{theorem}{Theorem}[section]
\newtheorem{lemma}[theorem]{Lemma}
\newtheorem{definition}[theorem]{Definition}
\newtheorem{corollary}[theorem]{Corollary}
\newtheorem{proposition}[theorem]{Proposition}
\newtheorem{remark}[theorem]{Remark}
\newtheorem{question}[theorem]{Question}
\newtheorem{example}[theorem]{Example}
\renewcommand{\b}{\mathfrak{b}}
\newcommand{\D}{\mathcal{D}}
\newcommand{\T}{\overline{T}}
\newcommand{\eps}{\varepsilon}
\newcommand{\Pcal}{\mathcal{P}}
\newcommand{\Lcal}{\mathcal{L}}
\newcommand{\ra}{\rightarrow}
\newcommand{\mc}[1]{\mathcal{#1}}
\newcommand{\defn}[1]{\textbf{#1}}
\newcommand{\boundary}{\partial}
\title{Bounding the Kirby-Thompson invariant of spun knots}
\author[Aranda, Pongtanapaisan, Taylor and Zhang]{Rom\'an Aranda, Puttipong Pongtanapaisan,\\ Scott A. Taylor, and Suixin (Cindy) Zhang}
\begin{document}
\pagenumbering{arabic}
\maketitle
\begin{abstract}
A bridge trisection of a smooth surface in $S^4$ is a decomposition analogous to a bridge splitting of a link in $S^3$. The Kirby-Thompson invariant of a bridge trisection measures its complexity in terms of distances between disc sets in the pants complex of the trisection surface. We give the first significant bounds for the Kirby-Thompson invariant of spun knots. In particular, we show that the Kirby-Thompson invariant of the spun trefoil is 15.
\end{abstract}

\section{Introduction}
Every smooth surface in the 4-sphere $S^4$ (or indeed any 4-manifold) admits a certain kind of decomposition known as a \emph{bridge trisection}. These bridge trisections are analogous to bridge positions of classical knots in $S^3$. They give rise to the fundamental notion of the \defn{bridge number} $\b(S)$ of a knotted smooth surface $S$. Bridge trisections and bridge number were defined by Meier and Zupan \cite{meier2017bridge} and are closely related to Gay and Kirby's trisections of smooth 4-manifolds \cite{gay2016trisecting}. The major advantage of both bridge trisections and trisections of 4-manifolds is that the handle structure of the knotted surface or 4-manifold is captured using 2-dimensional data on the trisection surface $\Sigma$. They also give rise to certain diagrammatic representations of knotted surfaces. In recent years, many authors have connected (bridge) trisections to major open problems in the theory of 2-knots and 4-manifolds \cite{PLC_thom, weinstein_trisections, gay2018doubly}.

One pressing problem has been to develop new 2-knot or 4-manifold invariants using trisections. In \cite{kirby2018new}, Kirby and Thompson defined a non-negative integer-valued 4-manifold invariant $\mc{L}(M)$ using the cut-complex of $\Sigma$. In \cite{blair2020kirby}, the third author and collaborators adapted Kirby and Thompson's definition to create an non-negative integer valued invariant $\mc{L}(S)$ of a smooth surface in $S^4$. They showed that for orientable $S$, if $\mc{L}(S) = 0$ then $S$ is an unlink. They also showed that for a connected, irreducible surface $S$, $\mc{L}(S) > \b(S) - g(S) - 2$, where $g(S)$ is the genus of $S$. Using spun knots, Meier and Zupan show that $\b(S)$ can be arbitrarily large for 2-knots $S$; consequently $\mc{L}(S)$ can be as well. However, for spun 2-bridge knots, the only previously known lower bound is that $\mc{L}(S)$ is nonzero. Calculating $\mc{L}(S)$ for specific surfaces remains a challenging problem, as does showing that for fixed bridge number $\mc{L}(S)$ can be arbitrarily large. In this paper, we take steps toward those questions by showing:
\begin{theorem}\label{thm_main}
Let $K\subset S^3$ be a 2-bridge knot with Conway number $p/q$. We have 
\[15\leq \Lcal(S(K)) \leq \min \big\{ 6d(p/q,0) + 6,  6d(p/q,\infty) + 9\big\}.\] 
In particular, if $K$ is a trefoil knot $3/1$, then $\Lcal(S(K))=15$.
\end{theorem}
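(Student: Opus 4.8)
The plan is to prove the two inequalities by separate arguments and then combine them for the trefoil. Throughout, recall that $\mathcal{L}(S(K))$ is the minimum over all bridge trisections $\mathbb{T}$ of $S(K)$ of the quantity $\mathcal{L}(\mathbb{T})$, itself the minimal total length of a system of three paths in the pants complex $\mathcal{P}(\Sigma)$ joining the three cut/disc sets $\mathcal{C}_1,\mathcal{C}_2,\mathcal{C}_3$ attached to the trivial tangles of $\mathbb{T}$, where $\Sigma$ is the (punctured sphere) trisection surface.

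\textbf{Upper bound.} The strategy is to build an explicit bridge trisection of $S(K)$ directly from the Conway/Schubert normal form $\mathfrak{b}(p,q)$ of $K$. A continued-fraction expansion of $p/q$ — equivalently, a geodesic in the Farey graph from $p/q$ to $0$ (respectively to $\infty$) — determines a sequence of elementary moves relating the trivial tangles of $K$, and spinning turns this into a sequence of pants moves (A-moves and S-moves) relating consecutive cut sets on $\Sigma$. The key computation is a bookkeeping lemma: each Farey edge contributes at most $6$ pants moves to the connecting paths, while the two "endpoints" of the expansion (near $p/q$, and near $0$ or $\infty$) contribute the additive constants, which work out to $6$ in the $0$-expansion and $9$ in the $\infty$-expansion. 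Summing along the chosen geodesic and taking whichever of the two expansions is cheaper yields $\mathcal{L}(S(K)) \le \min\{6d(p/q,0)+6,\ 6d(p/q,\infty)+9\}$. The only real work here is the explicit per-step count of pants moves, which is routine but must be done carefully to get the constants right.

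\textbf{Lower bound.} This is the crux and the main obstacle. The general inequality $\mathcal{L}(S) > \mathfrak{b}(S) - g(S) - 2$ from \cite{blair2020kirby} is far too weak to give $15$, so one must argue directly in $\mathcal{P}(\Sigma)$. First observe that that same inequality forces $\mathfrak{b}(S(K))$ to be small whenever $\mathcal{L}(S(K)) < 15$, so it suffices to rule out $\mathcal{L}(\mathbb{T}) \le 14$ for bridge trisections $\mathbb{T}$ of bounded bridge number; in fact the crucial case is the efficient (minimal) bridge trisection, whose trisection surface is a small-punctured sphere. For such $\mathbb{T}$ one identifies the three disc sets $\mathcal{C}_i \subset \mathcal{P}(\Sigma)$ explicitly in terms of the tangle shadows, and then shows, using the combinatorial geometry of the pants complex of the punctured sphere together with the action of its mapping class group, that any connecting system of paths of total length $\le 14$ would force the tangle shadows to be isotopic to the trivial shadows — i.e. would force $K$ to be the unknot, contradicting that $K$ is $2$-bridge. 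Making the reduction to finitely many trisections effective, and carrying out the pants-complex estimate (very possibly with some computer assistance for the punctured-sphere case), is where essentially all the difficulty lies.

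\textbf{The trefoil.} For $K = 3/1$ one computes $d(3/1,0) = d(3/1,\infty) = 2$, so the general upper bound only gives $\mathcal{L}(S(3/1)) \le 18$. To reach $15$ one returns to the standard efficient bridge trisection of the spun trefoil and exhibits by hand an optimal system of pants paths of total length exactly $15$, improving on the generic count; this is a finite, explicit optimization on the trisection sphere of the spun trefoil. Combining this with the general lower bound $\mathcal{L}(S(K)) \ge 15$ gives $\mathcal{L}(S(3/1)) = 15$, completing the proof.
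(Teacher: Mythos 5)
Your proposal diverges from the paper in several substantive ways, and it also contains a concrete computational error that undermines the trefoil step.

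\textbf{Trefoil miscount.} You assert $d(3/1,\infty)=2$, but in the Farey graph $3/1$ and $\infty=1/0$ are adjacent since $|3\cdot 0 - 1\cdot 1| = 1$, so $d(3/1,\infty)=1$. The general upper bound therefore already gives $6\cdot 1 + 9 = 15$, matching the lower bound with no further by-hand optimization of paths on the trefoil's trisection sphere. Your proposed ``return to the efficient bridge trisection and improve on the generic count'' is a non-step; the extra constant $+9$ in the $\infty$-expansion is precisely engineered so that the case $d(p/q,\infty)=1$ closes the gap. (The paper does display explicit length-$5$ paths for the trefoil in Example~\ref{example_L_trefoil}, but only as motivation; Corollary~\ref{cor_upper_2bridges} makes this unnecessary.)

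\textbf{Lower bound: wrong target.} The paper does not show that total length $\leq 14$ forces $K$ to be the unknot. What it actually proves (Theorem~\ref{thm_lower_bound}) is that \emph{every} $(4,2)$-bridge trisection of a knotted connected surface in $S^4$ has $\mathcal{L}(\mathcal{T})\geq 15$, independently of whether the surface is a spun knot. The reduction to $(4,2)$-trisections for spun $2$-bridge knots is via Meier--Zupan's computation $\mathfrak{b}(S(K))=4$ together with the patch-number argument (Theorem~\ref{thm_spun_are_42}), not via the inequality $\mathcal{L}(S)>\mathfrak{b}(S)-g(S)-2$. Moreover, the contradictions in the paper's argument come from showing the bridge trisection would be \emph{stabilized} or \emph{reducible} (Lemmas~\ref{gen_criterion_destab}, \ref{criterion_destab}, and the propositions showing each pair-distance $d(p^i_{ij},p^j_{ij})\geq 5$), which is a property of the trisection surface and bridge disks, not a statement about $K$. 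It is unclear how one would even recover the knot type of $K$ from a short system of pants paths, so the route you sketch --- forcing ``tangle shadows isotopic to trivial shadows'' --- is not an equivalent reformulation and would need substantial extra machinery to carry out, if it can be made to work at all.

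\textbf{Upper bound: right spirit, vague mechanism.} Your Farey-geodesic bookkeeping sketch is consistent with the paper's result, but the actual mechanism is concrete: the paper fixes Meier--Zupan's $\mathcal{T}_{MZ}$ trisection and constructs simplicial maps $G^\pm_{\eps,\bar\delta}:\Pcal(\Sigma_{2b})\to\Pcal(\Sigma_{6b-4})$ induced by collapsing perturbation (sensor) spheres, which send pants paths between disc sets of the original bridge splitting of $K$ to pants paths between disc sets of the trisected spun knot (Lemma~\ref{lem_props_G_map}). The $6d$ arises because each of the three edges of the cut loop carries two images of the same length-$d$ path, and the additive constants $6$ and $9$ come from connecting $G^+_{\eps,\bar\delta}(p_1)$ to $G^-_{\rho,\bar\eps}(p_1)$ at the three corners ($2(b-1)=2$ moves per corner when $p_1\in\Pcal_{\mathrm{comp}}(T^-_K)$, three moves per corner for the slope-$\infty$ curve). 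Your proposal should either reproduce this construction or supply an alternative at the same level of detail; ``6 pants moves per Farey edge'' is a correct-sounding heuristic but does not by itself establish the bound.

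In short: the upper bound strategy is roughly aligned but would need to be made precise along the paper's lines; the trefoil step contains a Farey-distance error that should be corrected (after which the optimization step you describe is unnecessary); and the lower bound as you describe it is a genuinely different and unsupported approach --- the paper's argument is a detailed case analysis of $(4,2)$-bridge trisections producing stabilization, not an argument that small $\mathcal{L}$ recovers the unknot.
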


\begin{proof}
The lower bound and upper bounds are proven in Corollaries \ref{cor_lower_2b} and \ref{cor_upper_2bridges}, respectively. 
\end{proof}

More generally, we construct estimates for any spun knot.  For a trivial $N$-tangle $T$, we define $\Pcal_{comp}(T)$ and $\Pcal_{c}(T)$ to be the sets of pants decompositions in the pants complex $p\in \Pcal(\Sigma_{2N})$ such that all loops in $p$ bound compressing disks and c-disks, respectively. 

\begin{theorem}
Let $K=T^+_K \cup T^-_K$ be a knot in $b$-bridge position. 
Let $d\geq 0$ be the distance in $\Pcal(\Sigma_{2b})$ between the sets $\Pcal_c(T^+_K)$ and $\Pcal_{comp}(T^-_K)$. Then 
\[ 6b-8\leq \Lcal(S(K))\leq 6(d+b-1).\]
\end{theorem}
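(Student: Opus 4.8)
The plan is to produce an explicit bridge trisection of $S(K)$ from the given $b$-bridge position, read the upper bound off of it, and prove the lower bound by showing that \emph{every} bridge trisection of $S(K)$ is forced to be at least this complicated. First I would recall, following Meier--Zupan~\cite{meier2017bridge}, how a $b$-bridge position spins: compatibly with $K = T^+_K\cup_{\Sigma_{2b}} T^-_K$ one writes $S^4$ as a union of three four-dimensional pieces, obtaining a bridge trisection of $S(K)$ whose trisection surface $\widehat\Sigma$ is built in a canonical way from the bridge sphere $\Sigma_{2b}$ and whose three trivial tangles $T_1,T_2,T_3$ are assembled from $T^+_K$, $T^-_K$, and one standard piece. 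I would make this precise enough to locate the three associated disc sets $\mathcal C_1,\mathcal C_2,\mathcal C_3\subset\Pcal(\widehat\Sigma)$ of the trisection, and to fix a reference pants decomposition of $\widehat\Sigma$ consisting of lifts of curves of $\Sigma_{2b}$ together with the $b-1$ extra ``handle'' curves that $\widehat\Sigma$ carries beyond $\Sigma_{2b}$.

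For the upper bound, since $\Lcal(S(K))$ is an infimum over bridge trisections of the sum of the three pairwise pants distances, it suffices to bound $D_{12}+D_{23}+D_{31}$ for this particular trisection. The two distances $D_{12},D_{23}$ involving the standard piece I expect to bound by $2(b-1)$ each, via an explicit short path: the standard tangle's disc set and the $T^\pm_K$ disc sets agree with the reference pants decomposition away from at most $b-1$ curves. For the remaining distance $D_{13}$ I would take a geodesic in $\Pcal(\Sigma_{2b})$ from $\Pcal_c(T^+_K)$ to $\Pcal_{comp}(T^-_K)$, of length $d$, and lift it to a path in $\Pcal(\widehat\Sigma)$; each pants move downstairs should be realized by a bounded number (I expect six) of pants moves upstairs, since changing one curve of $\Sigma_{2b}$ forces only a bounded cascade of flips among the curves lying over it. Allowing $2(b-1)$ further moves to reconcile the handle curves yields $D_{13}\le 6d+2(b-1)$, and summing the three gives $\Lcal(S(K))\le 6d+6(b-1)=6(d+b-1)$.

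For the lower bound, the general estimate $\Lcal(S)>\b(S)-g(S)-2$ of~\cite{blair2020kirby} is far too weak, so a sharper and more local argument is needed. I would argue that in an \emph{arbitrary} bridge trisection of $S(K)$ the three disc sets still record the $b$-bridge structure of $K$, so that any path between two of them in the pants complex must use at least $2b$ moves up to a small additive error; summing three such per-pair estimates gives $\Lcal(S(K))\ge 6b-8$. Concretely this should reduce to a combinatorial lower bound on the pants distance between the two ``tangle disc sets'' in terms of $\b(K)$, for which the efficiency and irreducibility results for bridge trisections from~\cite{blair2020kirby} are the natural input.

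I expect the main obstacles to be twofold. First, pinning down the multiplicative constant (the ``$6$'') when transferring a pants path from $\Pcal(\Sigma_{2b})$ up to $\Pcal(\widehat\Sigma)$: one must understand exactly which curves of $\widehat\Sigma$ lie over a given curve of $\Sigma_{2b}$ and how a single flip propagates, and it is precisely here that the asymmetry in the statement between $\Pcal_c(T^+_K)$ and $\Pcal_{comp}(T^-_K)$ will have to be accounted for. Second, the per-pair lower bound, since one must argue robustly --- for every bridge trisection, not merely the natural one --- that the three disc sets cannot be brought close together in the pants complex; this is the sort of statement that typically carries the real content of Kirby--Thompson-type computations.
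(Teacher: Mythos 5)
Your upper-bound strategy has a structural error that cannot be patched. You propose bounding two of the three pairwise distances $D_{12}, D_{23}$ by $2(b-1)$ each and concentrating all of the $6d$ contribution in $D_{31}$. This cannot be correct: for $b=2$ (e.g.\ the spun trefoil) it would give $D_{12}, D_{23} \leq 2$, whereas Theorem \ref{thm_lower_bound} of the paper (via Propositions \ref{prop_longer_proposition}--\ref{prop_five_44}) proves that each pairwise distance in a $(4,2)$-bridge trisection is at least $5$. Relatedly, the heuristic that ``each pants move downstairs should be realized by a bounded number (I expect six) of pants moves upstairs'' does not reflect what happens: the paper constructs six maps $G^\pm_{\eps,\bar\delta}:\Pcal(\Sigma_{2b})\to\Pcal(\Sigma_{6b-4})$ (indexed by the three cyclic permutations and two tangles) which are each \emph{1-simplicial}, so a single A-move downstairs is realized by a single A-move upstairs, not six. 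The factor of six arises because the length-$d$ geodesic $\lambda$ in $\Pcal(\Sigma_{2b})$ is pushed forward six times, once per $G^\pm_{\eps,\bar\delta}$, and assembled into a hexagonal loop: each of the three distances $d(p^i_{ij},p^j_{ij})$ is bounded by the length of a concatenated path $G^+_{\eps,\bar\delta}(\lambda)$ (length $d$), a length-$2(b-1)$ ``outer'' path between $G^+_{\eps,\bar\delta}(p_1)$ and $G^-_{\rho,\bar\eps}(p_1)$, and $G^-_{\rho,\bar\eps}(\lambda)$ (length $d$), giving $2d + 2(b-1)$ per side and $6(d+b-1)$ total. Your bookkeeping obtains the same final sum, but the per-summand bounds you propose are false, so the argument would not survive closer scrutiny.

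On the lower bound, you propose to prove a new per-pair estimate for arbitrary bridge trisections using efficiency and irreducibility machinery from \cite{blair2020kirby}. In fact the paper does not re-derive this at all; the inequality $6b - 8 \leq \Lcal(S(K))$ is quoted verbatim as Theorem 6.3 of \cite{blair2020kirby} (which, combined with $c_1 + c_2 + c_3 = \b(\mc T) + 2$ for a knotted sphere and the meridional-rank lower bound $c_i \geq b$, yields $\Lcal(\mc T)\geq 2(c_1+c_2+c_3)-8 \geq 6b-8$). You correctly identify the natural tools, but the step you flag as ``carrying the real content'' of the theorem is actually the step the paper simply cites. The genuine new content of this theorem is the upper bound, and your sketch of that bound is where the proof would break down.
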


\begin{proof}
The upper bound is proven in Theorem \ref{thm_upper_bbridges} for a particular minimal bridge trisection of $S(K)$. Since $\Lcal(S(K))$ is the minimum value of $\Lcal(\mc{T})$ along all minimal bridge trisections of $S(K)$ (see Section \ref{section_L_invariant}), the upper bound holds. The lower bound is Theorem 6.3 of \cite{blair2020kirby}. 
\end{proof}

The invariant $\mc{L}(\mc{T})$ for a bridge trisection $\mc{T}$ with trisection surface $\Sigma$ is defined using the pants complex of $\mc{T}$ and the associated disc complexes (see Section \ref{section_L_invariant}). Most of the delicate combinatorial work in this paper consists of a careful analysis of paths in the pants complex. Our techniques may, therefore, also be of interest to those working on surface dynamics. In fact, most of our work in Section \ref{section_combinatorics} focuses in understanding the combinatorics of (4,2)-bridge trisections. We show

\newtheorem*{thm_lower_bound}{Theorem \ref{thm_lower_bound}}
\begin{thm_lower_bound}
Let $\mc{T}$ be a $(4,2)$-bridge trisection for a knotted connected surface $F$ in $S^4$. Then $$L(\mc{T})\geq 15.$$ 
\end{thm_lower_bound}

In \cite{meier2017bridge}, Meier and Zupan described bridge trisection diagrams $\mc{T}_{MZ}$ for twist spun knots. Even though $(\pm 1)$-twist 2-bridge knots are unknotted, it is unclear whether their bridge trisections $\mc{T}_{MZ}$ are stabilized. They form a family of candidates of non-stabilized non-minimal bridge trisections. In order to disprove this, one could try to build upper bounds for $\Lcal(\mc{T}_{MZ})$ of $(\pm 1)$-twist spun knots and use Theorem \ref{thm_lower_bound} to see they are stabilized. 

\subsection*{Acknowledgements}
Taylor was partially supported by NSF Grant DMS-2104022. Aranda and Pongtanapaisan were partially supported by a grant from the Berger Fund and Colby College. The second author acknowledges the Pacific Institute for the Mathematical Sciences for the support. We are grateful to Nathaniel Ferguson for helpful conversations and to Jeffrey Meier for suggesting this project.

\section{Preliminaries}

In this section, we introduce terminology and recall the definitions of the pants complex, a genus-0 trisection of $S^4$ and bridge trisections, and the invariant $\mathcal{L}$. For more detailed explanations please refer to \cite{meier2017bridge,blair2020kirby}

\subsection{The pants complex}
Suppose that $\Sigma$ is a compact surface with punctures. A simple closed curve $\gamma\subset\Sigma$ is called \textbf{essential} if it is disjoint from the punctures, does not bound an unpunctured or once-punctured disk in $\Sigma$, and does not cobound an unpunctured annulus in $\Sigma$ with $\partial \Sigma$. If $\Sigma$ is a sphere, we define the \textbf{inside} of a simple closed curve in $\Sigma$ to be the sides with the least punctures punctures and the \textbf{outside} to be a side that is not an inside. Some curves  have two inside regions and no outside region. We say that a simple closed curve in a sphere $\Sigma$ is an \defn{odd curve} if the number of punctures on each side is odd and an \defn{even curve} otherwise.

A \textbf{pair-of-pants} is a sphere with three punctures, an annulus with one puncture, or a disk with two punctures. A \textbf{pants decomposition} of $\Sigma$ is a collection of pairwise disjoint essential curves cutting $\Sigma$ into pairs of pants. Pants decompositions are considered up to isotopy. If $\Sigma$ is a sphere with $2b \geq 4$ punctures, then each pants decomposition of $\Sigma$ has $2b-3$ curves. Define $P(\Sigma)$, the \textbf{pants complex}\footnote{It is possible to define higher dimensional simplices of $\mc{P}(\Sigma)$, but we will not make use of them.} of $\Sigma$, as follows. Each pants decomposition of $\Sigma$ is a vertex of $P(\Sigma)$. Two vertices are connected by an edge if the two corresponding pants decompositions have all but one (isotopy class of) curve in common and the two curves where they differ (have representatives that) intersect minimally in exactly two points. We say that the two endpoints of an edge differ by an \textbf{A-move}. The distance $d(x, y)$ between two collections of vertices $x$ and $y$ in $P(\Sigma)$ is the minimum number of edges in a path in $P(\Sigma)$ between a vertex of $x$ and a vertex of $y$. For a path $\alpha$ in $\mc{P}(\Sigma)$, we say that a curve $\gamma \subset \Sigma$ is \defn{unmoved} on $\alpha$ if it (up to isotopy) belongs to every vertex of $\alpha$. On the other hand, if we have a path from vertex $a$ to vertex $b$ and if $c$ is a curve in a pants decomposition $x$ that is a vertex of the path, then if the edge of the path leaving $x$ corresponds to an $A$-move replacing $c$ with $c'$, we say that $c$ is \defn{moved} by the path and write $c \mapsto c'$. Clearly, the length of the path is at least the number of curves moved by the path. Some curves may be moved multiple times so it need not be equal to the number of curves that are moved.

A \textbf{trivial tangle} $(B_\delta, \delta)$ is a 3-ball $B_\delta$ containing properly embedded arcs $\delta$ such that, fixing the endpoints of $\delta$, we may isotope $\delta$ into $\partial B_\delta$. We consider the endpoints of $\delta$ on $\Sigma = \boundary B_\delta$ to be punctures on $\Sigma$. A \defn{c-disc} in $(B_\delta, \delta)$ is a properly embedded disc $D \subset B_\delta$ transverse to $\delta$, with $\boundary D$ essential in the (punctured) surface $\Sigma$, and with $|D \cap \delta| \leq 1$. The c-disc $D$ is a \defn{compressing disc} if $|D \cap \delta| = 0$ and a \defn{cut-disc} otherwise. The \defn{disc set} $\mc{D}(B_\delta, \delta)$ for $(B_\delta, \delta)$ consists of the vertices $v$ of $\mc{P}(\Sigma)$ such that each curve in the pants decomposition $v$ bounds a c-disc in $(B_\delta, \delta)$. 

Each arc $\delta_0$ of a trivial tangle $(B_\delta, \delta)$ admits a disc $D$ such that $\boundary D$ is the endpoint union of $\delta_0$ with an arc on $\boundary B_\delta$ and with interior disjoint from $\delta$. Such a disc is called a \defn{bridge disc} and the arc on $\boundary B_\delta$ is a \defn{shadow arc}. There are a collection of pairwise disjoint bridge discs so that each arc of $\delta$ belongs to a bridge disc. The union of all the shadow arcs for such a collection of bridge discs is a \defn{complete shadow arc collection}.

For a link $L \subset S^3$, a decomposition $(S^3, L) = (B_\lambda, \lambda) \cup_\Sigma (B_\tau, \tau)$, where each pair $(B_\delta, \delta)$ is a trivial tangle, is called a \defn{bridge splitting}. The surface $\Sigma=\partial B_i$ for $i=\lambda, \tau$ is the \defn{bridge sphere} of the splitting.  An \textbf{efficient defining pair} is a pair of pants decomposition $(\D_\kappa, \D_\lambda)$ with $x\in  \D_\kappa$ and $y \in \D_\lambda$ such that $d(x, y) = d(\D_\kappa, \D_\lambda)$. Zupan \cite{zupan2013bridge} uses this distance to define a knot invariant for knots in $S^3$. We need the following well-known result (see \cite{bachman2005distance,zupan2013bridge}):
\begin{lemma}\label{std unlink surface}
Suppose that $\Sigma$ is a bridge sphere for an unlink $L \subset S^3$, then:
\begin{enumerate}
    \item If $|L| \geq 2$, there is a sphere $P \subset S^3$ intersecting $\Sigma$ in a single essential simple closed curve and separating components of $L$. Such a sphere is called a \defn{reducing sphere} for $\Sigma$.
    \item If $L_0$ is a component of $L$ such that $|L_0 \cap \Sigma| = 2$, then there is a disc with boundary equal to $L_0$ and interior disjoint from $L$ such that $L_0 \cap \Sigma$ is a single arc. Furthermore, given a collection of pairwise disjoint reducing spheres, there is such a disc disjoint from them.
    \item If $L_0$ is a component of $L$ such that $|L_0 \cap \Sigma| \geq 4$, then there exist discs $D_1$ and $D_2$ on opposite sides of $\Sigma$ such that:
    \begin{enumerate}
        \item For $i = 1,2$, $\boundary D_i$ is the endpoint union of a strand of $L \setminus \Sigma$ and an arc on $\Sigma$;
        \item For $i = 1,2$, the interior of $D_i$ is disjoint from $L \cup \Sigma$;
        \item $D_1 \cap D_2$ is a single point (necessarily a puncture of $\Sigma$).
    \end{enumerate}
    In this case, we say that $L$ is \textbf{perturbed} and call the discs $D_1$ and $D_2$ a \defn{perturbing pair}. Furthermore, given a collection of pairwise disjoint reducing spheres, there exists a perturbing pair disjoint from them.
\end{enumerate}
\end{lemma}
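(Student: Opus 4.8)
The plan is to realise the relevant object --- a splitting sphere for $L$ for part (1), a spanning disc for $L_0$ for parts (2) and (3) --- in a position that minimises its intersection with $\Sigma$ (and, for the ``furthermore'' clauses, with the given reducing spheres), and then to read the conclusion off the resulting intersection pattern. These three statements are standard consequences of such an analysis; they are implicit in the work of Hayashi--Shimokawa, Otal, and Scharlemann--Thompson on bridge and Heegaard splittings of the trivial knot and the unknot, and are recorded in \cite{bachman2005distance,zupan2013bridge}. Write the bridge splitting as $(S^3,L)=(B^+,\tau^+)\cup_\Sigma(B^-,\tau^-)$; throughout we will use the Schoenflies theorem, irreducibility of $3$--balls, and the fact that every component of $L$ meets $\Sigma$ in a positive even number of points.

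For (1), since an unlink with $|L|\ge 2$ is split, fix a sphere $P\subset S^3\setminus L$ separating $L$ into two nonempty sublinks with $|P\cap\Sigma|$ minimal among all such spheres. If $P\cap\Sigma=\emptyset$ then $P$ lies in some $B^{\pm}$ and bounds a ball there disjoint from $\Sigma$, which must then contain or miss an entire component of $L$ --- impossible. So $|P\cap\Sigma|\ge 1$, and a standard innermost--disc exchange on the sphere $P$ (replacing an innermost subdisc of $P\setminus\Sigma$ by a subdisc of $\Sigma$, pushed off) contradicts minimality unless $|P\cap\Sigma|=1$. The resulting curve $c=P\cap\Sigma$ bounds a disc in $B^+$ and one in $B^-$; it cannot bound an unpunctured disc in $\Sigma$ (else $P$ could be isotoped off $\Sigma$, handled above), nor a once--punctured disc (else the component of $L$ through that puncture, being disjoint from $P$, would be confined to the ball $P$ bounds on the appropriate side and hence meet $\Sigma$ only once). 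Hence $c$ is essential and $P$ is a reducing sphere.

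For (2) and (3), let $\mathcal{R}$ be the given pairwise disjoint reducing spheres (possibly none) and let $\Delta$ be a disc with $\partial\Delta=L_0$ and interior disjoint from $L$, which exists as $L$ is an unlink. Isotope $\Delta$ rel boundary, transverse to $\Sigma\cup\bigcup\mathcal{R}$, to minimise the number of components of $\Delta\cap(\Sigma\cup\bigcup\mathcal{R})$. Innermost--circle and outermost--arc moves then clean this up: circles of $\Delta\cap\bigcup\mathcal{R}$ and inessential circles of $\Delta\cap\Sigma$ are isotoped away; an outermost arc of $\Delta\cap\bigcup\mathcal{R}$ cuts off a subdisc meeting some $R\in\mathcal{R}$ in one arc, and since $R$ bounds a ball on each side this subdisc guides an isotopy reducing $|\Delta\cap R|$; and an innermost essential circle of $\Delta\cap\Sigma$ bounds a compressing disc for $\Sigma$ in some $B^{\pm}$, which as in part (1) either matches a disc across $\Sigma$ to give a new reducing sphere (absorbed into $\mathcal{R}$) or, on compressing $\Sigma$, produces a simpler configuration of bridge spheres for sublinks of an unlink, on which we induct on $|L\cap\Sigma|$. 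After this, $\Delta\cap\mathcal{R}=\emptyset$ and $\Delta\cap\Sigma$ is a system of $\tfrac12|L_0\cap\Sigma|$ disjoint arcs joining the punctures $L_0\cap\Sigma$, cutting $\Delta$ into subdiscs lying alternately in $B^+$ and $B^-$, the outermost ones being bridge discs for strands of $L_0$. If $|L_0\cap\Sigma|=2$ there is one such arc, so $\Delta$ is the disc required by (2), now disjoint from $\mathcal{R}$. If $|L_0\cap\Sigma|\ge 4$, this picture, together with $L_0$ being unknotted and $\Sigma$ being a bridge sphere for the \emph{unlink} $L$, forces the splitting to be perturbed along $L_0$: there are bridge discs $D_1\subset B^+$ and $D_2\subset B^-$, with interiors disjoint from $L\cup\Sigma$, meeting in exactly one puncture. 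Since $D_1,D_2$ are produced inside the piece of $(S^3,L)$ cut off by $\mathcal{R}$ containing $L_0$, they are disjoint from $\mathcal{R}$, giving the ``furthermore'' in (3).

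The crux, and where the full thin--position / weak--reduction analysis must enter, is this last step of (3): the spanning disc on its own only supplies disjoint bridge discs for $L_0$, and upgrading two of them on opposite sides of $\Sigma$ to a genuine \emph{perturbing} pair --- interiors and shadow arcs disjoint except at one shared puncture --- is exactly the assertion that a nonminimal bridge presentation of the unknot is a perturbation of a simpler one. A secondary difficulty is the bookkeeping needed to carry $\mathcal{R}$, and the discs already extracted, through the induction on $|L\cap\Sigma|$, checking that compressing $\Sigma$ along an essential disc preserves both unlink--ness of the sublinks and the reducing spheres accumulated so far. Both points are carried out in detail in \cite{bachman2005distance,zupan2013bridge}.
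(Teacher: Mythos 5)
The paper does not actually prove this lemma: it is quoted as a known result with a pointer to \cite{bachman2005distance,zupan2013bridge}, so the honest comparison is between your sketch and the theorems in those references. Your overall framing (realize a splitting sphere, resp.\ spanning disc, minimizing intersections with $\Sigma$ and the reducing spheres, then read off the conclusion) is the right one, and you correctly identify that the heart of part (3) is the Otal/Hayashi--Shimokawa-type statement that a non-minimal bridge position of an unknotted component is perturbed; deferring that to the literature is essentially what the paper itself does.

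However, the steps you claim to carry out outright contain a genuine gap, and it sits exactly where the cited theorems do their work. In part (1), minimality of $|P\cap\Sigma|$ together with innermost-disc exchanges only lets you remove curves of $P\cap\Sigma$ that are \emph{inessential} in the punctured sphere $\Sigma\setminus L$ (an innermost such curve bounds an unpunctured disc in $\Sigma$, and surgering $P$ along it reduces the intersection while one of the two resulting spheres still splits $L$). Once every curve of $P\cap\Sigma$ is essential, an innermost subdisc of $P$ is merely a compressing disc for $\Sigma\setminus L$ in one of the two balls, and there is no isotopy or exchange that discards it: reducing from ``several essential curves'' to ``one essential curve'' is precisely the Haken-type reducibility statement for bridge spheres, which requires the thin-position/weak-reduction machinery (or the triviality of both tangles exploited as in Hayashi--Shimokawa), not general position. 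The same problem recurs in your treatment of parts (2)--(3): an innermost \emph{essential} circle of $\Delta\cap\Sigma$ cannot be ``matched with a disc across $\Sigma$'' without argument (nothing guarantees a compressing disc on the other side with the same boundary), and ``compress $\Sigma$ and induct on $|L\cap\Sigma|$'' changes the bridge sphere, so you would need to verify that the resulting surface is again a bridge sphere for an unlink and that the reducing spheres and the disc $\Delta$ survive the compression --- this bookkeeping is not routine and is not supplied. So as written the proposal proves part (1) only up to the essential-curve case, and parts (2)--(3) only modulo the same unproved reduction plus the perturbation theorem you already flagged; all of these are exactly the content of \cite{bachman2005distance,zupan2013bridge}, which is why the paper cites them rather than arguing by innermost discs.
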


\begin{definition}\label{def_curve_types}
For a link $L$ in $S^3$ with bridge sphere $\Sigma$, the intersection of a reducing sphere with $\Sigma$ is called a \defn{reducing curve} for $(S^3, L)$ on $\Sigma$. Notice that an essential curve is a reducing curve if and only if it bounds compressing discs for $\Sigma$ in both of the trivial tangles on either side of $\Sigma$. Similarly, if $\gamma \subset \Sigma$ is a curve bounding cut discs on both sides of $\Sigma$, then $\gamma$ is a \defn{cut-reducing curve} for $(S^3, L)$ on $\Sigma$.
\end{definition}

\subsection{Bridge trisections}

Suppose that $S$ is a smooth, closed surface in $S^4$. A \defn{bridge trisection} $\mc{T}$ with trisection surface $\Sigma$ (a sphere) is defined as follows\footnote{It is possible to define higher genus bridge trisections \cite{meier2018bridge}, but we will not need them in this paper.}. Suppose that $W_1$, $W_2$, and $W_3$ are 4-balls in $S^4$ such that $W_{i} \cap W_{j}$ is a 3-ball $B_{ij}$ (for $i \neq j$) and that 
\[W_1 \cap W_2 \cap W_3 = B_{12} \cap B_{23} \cap B_{13}\]
is a smooth 2-sphere $\Sigma$. Then we say that $S^4 = W_1 \cup W_2 \cup W_3$ is a \defn{0-trisection} of $S^4$ \cite{gay2016trisecting}. Suppose also that each of $B_{12}$, $B_{23}$, and $B_{13}$ are transverse to $S$ and that $\Sigma$ and $S$ intersect transversally in $2b$ points and that:
\begin{enumerate}
    \item For each $i \in \{1,2,3\}$, $S \cap W_i$ is a trivial disk system;
    \item For each $\{i,j,k\} = \{1,2,3\}$, in $B_{ij} \cup B_{jk}$, the sphere $\Sigma$ is a bridge surface for the link $S \cap (B_{ij} \cup B_{jk})$;
    \item For each $\{i,j,k\} = \{1,2,3\}$, the link $S \cap (B_{ij} \cup B_{jk})$ is an unlink of $c_j$ components.
\end{enumerate}

We call $\mathcal{S}=(B_{12},T_{12})\cup (B_{23},T_{23})\cup(B_{31},T_{31})$ the \textbf{spine} of the bridge trisection and $\Sigma$ the \textbf{bridge surface} of $S$. The numbers $c_1, c_2, c_3$ are the \defn{patch numbers} of the bridge trisection. The \defn{bridge number} $\b(\mc{T})$ of the trisection is $\b(\mc{T}) = |S \cap \Sigma|/2$ and the \defn{bridge number} $\b(S)$ of $S$ is the minimum of $\b(\mc{T})$ over all bridge trisections $\mc{T}$ for $S$. We say that a trisection $\mc{T}$ with bridge number $b$ and patch numbers $c_1, c_2, c_3$ is a $(b; c_1, c_2, c_3)$-bridge trisection. As we mentioned, the definitions of bridge trisection and bridge number are due to Meier and Zupan, who also prove that every smooth surface admits a bridge trisection.  We let $\mathcal{D}_{ij} \subset \mc{P}(\Sigma)$ be the disk set of the tangle $(B_{ij},T_{ij})$. 

Meier and Zupan also introduce in \cite{meier2017bridge} the notion of a \textbf{tri-plane diagram}: a triple of planar tangle diagrams whose pairwise unions are unlinks. Since a bridge trisection is determined by its spine consisting of a triple of 3-balls $B_{12}, B_{23}, B_{31}$ with trivial tangles $T_{12}, T_{23}, T_{31}$, one can project the tangle $T_ij$ onto a vertical disk in $B_ij$ respectively and obtain three planar tangle diagrams. In particular, every knotted surface in $S^4$ can be represented by a tri-plane diagram which is unique up to interior Reidemeister moves, bridge sphere braiding, and perturbation and deperturbation. See Section 2 in \cite{meier2017bridge} for details.

\begin{lemma}\label{all 2}
Suppose that $S \subset S^4$ is a topologically knotted sphere with a $(4; c_1, c_2, c_3)$-trisection and $4 = \b(S)$. Then $c_i = 2$ for all $i$.
\end{lemma}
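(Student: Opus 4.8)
The plan is to combine the Euler characteristic formula for bridge trisections with the hypothesis that $\mc{T}$ realizes $\b(S)$. First I would record the cell structure that $\mc{T}$ puts on $S$: the $2b$ points of $S\cap\Sigma$ are the $0$--cells, the $3b$ arcs of the trivial tangles $T_{12},T_{23},T_{31}$ (there are $b$ in each) are the $1$--cells, and the disks of $S\cap W_1$, $S\cap W_2$, $S\cap W_3$ (there are $c_1$, $c_2$, $c_3$ of them) are the $2$--cells. Hence $\chi(S)=2b-3b+(c_1+c_2+c_3)$, that is, $c_1+c_2+c_3=\chi(S)+b$. Since $S$ is a sphere and $b=\b(S)=4$, this gives $c_1+c_2+c_3=6$, so the unordered triple $(c_1,c_2,c_3)$ is $(1,1,4)$, $(1,2,3)$, or $(2,2,2)$. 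It therefore suffices to show that no $c_i$ equals $1$.

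Suppose $c_1=1$. Then $S\cap W_1$ is a single trivial disk $D$ (a trivial disk system with one boundary component), so $L:=S\cap(B_{12}\cup B_{13})=\partial D$ is an unknot lying on $\Sigma$ inside $\partial W_1\cong S^3$ with $|L\cap\Sigma|=2b=8\ge 4$. By Lemma~\ref{std unlink surface}(3), $L$ is perturbed: there are discs $D_1\subset B_{12}$ and $D_2\subset B_{13}$ (the two sides of $\Sigma$ in $\partial W_1$), each a bridge disc for a strand of $L$, with interiors disjoint from $L$ and from $\Sigma$ and with $D_1\cap D_2$ a single puncture. Because $B_{12}\cap B_{23}=B_{13}\cap B_{23}=\Sigma$ while the interiors of $D_1,D_2$ miss $\Sigma$, those interiors are in fact disjoint from all of $S$. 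Thus $(D_1,D_2)$ is a perturbing pair for the bridge trisection $\mc{T}$, and undoing this perturbation produces a bridge trisection of $S$ with bridge number $b-1=3$; hence $\b(S)\le 3<4$, a contradiction. So every $c_i\ge 2$, and with $c_1+c_2+c_3=6$ we conclude $c_1=c_2=c_3=2$.

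The step I expect to be the main obstacle is the passage from ``$L=S\cap(B_{12}\cup B_{13})$ is perturbed'' to ``$\mc{T}$ is perturbed'': one must verify that a perturbing pair for this two--page unlink, with interiors off $S$, really is the undo-data of a perturbation of the \emph{whole} bridge trisection --- in particular that removing it near the relevant punctures leaves the third tangle $T_{23}$ in trivial position and yields a bona fide bridge trisection of lower bridge number. This should be carried out using Meier and Zupan's description of perturbation of bridge trisections in \cite{meier2017bridge}; alternatively, the bound $c_i\ge 2$ for a bridge trisection realizing $\b(S)$ of a connected knotted surface may be available to cite directly from \cite{blair2020kirby}. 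The remaining ingredients --- the cell count, the arithmetic forcing $c_1+c_2+c_3=6$, and the fact that a perturbed bridge trisection has bridge number strictly larger than $\b(S)$ --- are routine.
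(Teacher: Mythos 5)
Your Euler characteristic computation $\chi(S) = 2b - 3b + (c_1+c_2+c_3) = c_1+c_2+c_3 - b$ is correct, and with $\chi(S)=2$, $b=4$ it gives $c_1+c_2+c_3 = 6$ exactly as in the paper. The two arguments diverge on showing each $c_i \geq 2$: the paper simply cites Corollary~1.12 of Meier--Zupan (a topologically knotted surface has $c_i\geq 2$ in any bridge trisection), whereas you attempt a direct perturbation argument.

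The step you flag as ``the main obstacle'' is in fact a genuine gap, and it cannot be repaired along the lines you sketch. A perturbing pair $(D_1,D_2)$ for the bridge splitting of $L_1 = T_{12}\cup\T_{13}$ on $\Sigma$ is not, by itself, a stabilization of the bridge trisection $\mc{T}$. Stabilization of a bridge trisection in the sense of Meier--Zupan involves all three tangles: in the Stabilization Criterion (Lemma~\ref{gen_criterion_destab}) one needs shadow arcs in $T_{ij}$, $T_{jk}$, \emph{and} $T_{ik}$ satisfying specific incidence conditions, while your pair supplies shadow arcs in only two of the three. Indeed, the implication ``$L_i$ perturbed $\Rightarrow$ $\mc{T}$ stabilized'' is false in general: for any bridge trisection with $\b(\mc{T}) > c_i$, the link $L_i$ is a perturbed unlink by Lemma~\ref{std unlink surface}(3), so if this implication held then every $(4,2)$-bridge trisection of a knotted sphere would be stabilized --- contradicting the central examples of this very paper (e.g.\ the unstabilized $(4,2)$-trisection of the spun trefoil). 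The observation that the interiors of $D_1, D_2$ are disjoint from all of $S$ is true but does not supply the missing data in $T_{23}$. The bound $c_i \geq 2$ should therefore be obtained by citation, which is essentially your suggested fallback and is what the paper does.
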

\begin{proof}
Since $S$ is topologically knotted, by \cite[Corollary 1.12]{meier2017bridge}, $c_i \geq 2$ for all $i$. The result follows since $2=\chi(S) = c_1 + c_2 + c_3 -4$.
\end{proof}

Henceforth, we abbreviate the phrase ``$(4; 2,2,2)$-trisection'' to $(4,2)$-trisection.

\subsection{Spun knots}\label{section_spun_knots}

We now recall a construction of spun knots from a knot $K\subset S^3$ due to Artin \cite{artin1925isotopie}. Let $(B^3, K^\circ)$ be the result of removing a small, open ball centered on a point in $K$, so that $K$ is a knotted arc with endpoints on the north and south poles, labeled $n$ and $s$ respectively. Then, the spin $S(K)$ of $K$ is the knotted surface given by \[(S^4,S(K))=\left((B^3, K^\circ)\times S^1\right)\cup \left((S^2,\{n,s\})\times D^2\right).\]

Meier and Zupan also show that every spun $b$-bridge knot $S(K)\in S^4$ has bridge number at most $3b -2$ by providing an explicit $(3b-2,b)$-bridge trisection, whose corresponding tri-plane diagram is shown below in Figure \ref{fig_spun_knot}. From now on, we will denote this particular bridge trisection by $\mathcal{T}_{MZ}$ and, for that trisection, define $T_{ij}$ as indicated for $i,j \in \{1,2,3\}$ with $i \neq j$. 

\begin{remark}
For this particular trisection $\mathcal{T}_{MZ}$ for a spun $b$-bridge knot, since $\b(\mathcal{T}_{MZ})=3b-2$ and $c_i=b$ for all $i\in\{1,2,3\}$, the corresponding bridge sphere is $2\b(\mathcal{T}_{MZ})$-punctured, and each pants decomposition $p_{ij}^i$ has exactly $2\b(\mathcal{T}_{MZ})-3=2(3b-2)-3=6b-7$ curves. Thus, it follows from Lemma \ref{lem_efficient_pairs} that there exist $p_{ij}^i\in \mathcal{D}_{ij}$ and $p_{ki}^i\in \mathcal{D}_{ik}$ with $d(p_{ij}^i, p_{ki}^i) = \b(\mathcal{T})-c_i=(3b-2)-b=2b-2$.
\end{remark}

\begin{figure}[h]
\centering
\includegraphics[width=1\textwidth]{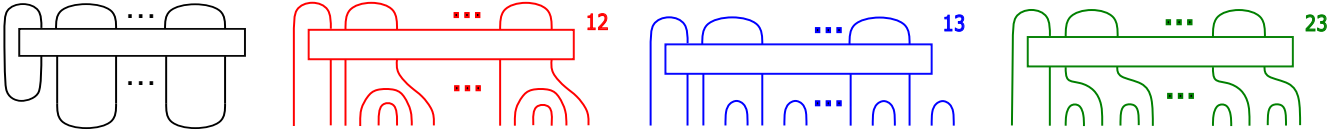}
\caption{A $(3b-2,b)$-bridge tri-plane diagram for the spin $\mathcal{S}(K)$ of the $b$-bridge knot $K$ given in bridge position (left). We will denote the tangles by $T_{12}, T_{13}$, and $T_{23}$ from left to right.}
\label{fig_spun_knot}
\end{figure}

We note the following:

\begin{theorem}[Meier-Zupan \cite{meier2017bridge}]\label{thm_spun_are_42}
If $K \subset S^3$ has $\b(K) = 2$, then $\b(\mathcal{S}(K)) = 4$. Consequently, if $\mc{T}$ is a $(4; c_1, c_2, c_3)$-trisection for a spun 2-bridge knot, then each $c_i = 2$.
\end{theorem}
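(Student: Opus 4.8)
The plan is to obtain the upper bound for free from the Meier--Zupan construction and to prove the matching lower bound with an Euler-characteristic count valid for every knotted sphere. First I would recall that the tri-plane diagram $\mathcal{T}_{MZ}$ of Figure \ref{fig_spun_knot} realizes $\mathcal{S}(K)$ with bridge number $3b-2$, where $b=\b(K)$; specializing to $b=2$ gives $\b(\mathcal{S}(K))\le 4$. So the substance of the first claim is the inequality $\b(\mathcal{S}(K))\ge 4$.

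For the lower bound I would first observe that $\mathcal{S}(K)$ is topologically knotted: since $\b(K)=2$ the knot $K$ is nontrivial, and Artin spinning gives $\pi_1(S^4\setminus\mathcal{S}(K))\cong\pi_1(S^3\setminus K)$, which is not infinite cyclic, so $\mathcal{S}(K)$ is not topologically unknotted. Now let $\mc{T}$ be an arbitrary bridge trisection of $\mathcal{S}(K)$ with bridge number $\b(\mc{T})$ and patch numbers $c_1,c_2,c_3$. Because $\mathcal{S}(K)$ is knotted, \cite[Corollary 1.12]{meier2017bridge} gives $c_i\ge 2$ for each $i$, while the standard relation $\chi(\mathcal{S}(K))=c_1+c_2+c_3-\b(\mc{T})$ together with $\chi(\mathcal{S}(K))=2$ (it is a sphere) yields $\b(\mc{T})=c_1+c_2+c_3-2\ge 4$. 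Taking the minimum over all bridge trisections gives $\b(\mathcal{S}(K))\ge 4$, hence $\b(\mathcal{S}(K))=4$.

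The ``consequently'' clause is then immediate: if $\mc{T}$ is a $(4;c_1,c_2,c_3)$-trisection for $\mathcal{S}(K)$, then it has bridge number $4=\b(\mathcal{S}(K))$ and $\mathcal{S}(K)$ is a topologically knotted sphere, so Lemma \ref{all 2} forces $c_i=2$ for all $i$ (equivalently, $c_1+c_2+c_3=\b(\mc{T})+2=6$ with each $c_i\ge 2$). I do not expect a genuine obstacle here: the only non-formal ingredients are the two external facts just used --- that the spin of a nontrivial knot is a knotted $2$-sphere, and that a knotted surface has all patch numbers at least $2$ --- and the only point requiring care is applying the Euler-characteristic normalization $\chi(S)=c_1+c_2+c_3-b$ consistently, exactly as in the proof of Lemma \ref{all 2}.
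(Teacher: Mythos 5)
Your proof is correct, but it takes a genuinely different route to the crucial inequality $c_i \geq 2$. The paper deduces $c_i\geq 2$ from the meridional rank machinery: Corollary 5.3 and Theorem 5.5 of \cite{meier2017bridge} give $\min_i c_i \geq \operatorname{mrk}(\mc{S}(K)) = \operatorname{mrk}(K)$, and then \cite{boileau1989pi} gives $\operatorname{mrk}(K)=2$ for a $2$-bridge knot. Topological knottedness of $\mc{S}(K)$ is then also a byproduct of $\operatorname{mrk}=2$. You instead establish knottedness directly from Artin's classical theorem that $\pi_1(S^4\setminus\mc{S}(K))\cong\pi_1(S^3\setminus K)$, and then invoke \cite[Corollary 1.12]{meier2017bridge} (the same result underlying Lemma \ref{all 2}) to conclude $c_i\geq 2$. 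After that, the Euler-characteristic bookkeeping $2=\chi=c_1+c_2+c_3-\b(\mc{T})\geq 6-\b(\mc{T})$ and the final appeal to Lemma \ref{all 2} are identical in both arguments. Your route is slightly lighter on citations (one classical fact plus Corollary 1.12, versus three results specific to meridional rank), while the paper's route has the extra advantage of computing the meridional rank of $\mc{S}(K)$ explicitly, which is a stronger piece of information than mere knottedness. Both are valid; you correctly identified that the only non-formal inputs are knottedness of the spin and the patch-number lower bound, and you supplied adequate justifications for each.
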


\begin{proof}
We defer to \cite[Section 5]{meier2017bridge} for details. Let $\mc{T}$ be a $(b; c_1, c_2, c_3)$ bridge trisection of a spun 2-bridge knot $\mc{S}(K)$. By Corollary 5.3 and Theorem 5.5 of \cite{meier2017bridge}:
\[
\min(c_1, c_2, c_3) \geq \operatorname{mrk}(\mc{S}(K)) = \operatorname{mrk}(K),
\]
where $\operatorname{mrk}$ is the ``meridional rank'' of the 2-knot or knot. By \cite{boileau1989pi}, $\operatorname{mrk}(K) = 2$, so $c_i \geq 2$ for all $i$. Also, \[2 = \chi(\mc{S}(K)) = c_1 + c_2 + c_3 - b \geq 6 - b.\]
Thus, $b \geq 4$. Since Meier and Zupan have constructed trisections of spun 2-bridge knots of bridge number 4, $\b(\mc{S}(K)) = 4$. Since the meridional rank of $\mc{S}(K) = 2$, $\mc{S}(K)$ is topologically knotted. The result follows from Lemma \ref{all 2}.
\end{proof}

\subsection{The Kirby-Thompson Invariant}\label{section_L_invariant}

We now define the Kirby-Thompson invariant of a bridge trisection. For a schematic diagram of the efficient defining pairs for a trisection, see Figure \ref{fig_L_invariant}.

\begin{definition}[Kirby-Thompson Invariant $\mathcal{L}$]\label{def_L_inv}
Suppose that $S\subset S^4$ is knotted surface with bridge trisection $\mathcal{T}$ having trisection surface $\Sigma$ and spine $\mathcal{S}=(B_{12},T_{12})\cup (B_{23},T_{23})\cup(B_{31},T_{31})$. For $\{i, j, k\} = \{1, 2, 3\},$ let $(p^j_{ij}, p^j_{jk})$ be an efficient defining pair for $(B_{ij} , T_{ij} )\cup_{\Sigma} (B_{jk}, T_{jk}).$ If $\Sigma$ is a sphere with strictly less than 4 punctures, define $\mathcal{L}(\mathcal{T}) = 0.$ Otherwise, define the \defn{Kirby-Thompson invariant} $\mathcal{L}(\mc{T})$ to be the minimum of 
\[
d(p^1_{12}, p^2_{12}) + d(p^2_{23}, p^3_{23}) + d(p^1_{31}, p^3_{31})
\] 
over all such choices of efficient defining pairs. Define the \defn{Kirby-Thompson invariant} $\mathcal{L}(S)$ to be the minimum of $\mathcal{L}(\mathcal{T} )$ over all trisections $\mathcal{T}$ of $S$ with $\b(\mathcal{T}) = \b(S)$.
\end{definition}

Moreover, the distance between an efficient defining pair in the setting of Definition \ref{def_L_inv} is determined. 
\begin{lemma}[Lemma 5.6 of \cite{blair2020kirby}]\label{lem_efficient_pairs}
If $\mc{T}$ is a $(\b(\mc{T}),c_1, c_2, c_3)$-bridge trisection, then every efficient defining pair satisfies 
\[d(p^i_{ij}, p^i_{ik}) = \b(\mc{T})-c_i.\] 
\end{lemma}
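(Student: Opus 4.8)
The plan is to reduce this to a statement purely about bridge splittings of unlinks in $S^3$ and then estimate the relevant pants-complex distance from both sides. By trisection axioms (2) and (3), for each $i$ the union $(B_{ij},T_{ij})\cup_\Sigma (B_{ik},T_{ik})$ is a bridge splitting of the $c_i$-component unlink $U$ with $2\b(\mc{T})$ punctures on $\Sigma$, and $\mc{D}_{ij},\mc{D}_{ik}$ are precisely the disc sets of its two trivial tangles. Since an efficient defining pair realizes $d(\mc{D}_{ij},\mc{D}_{ik})$ by definition, it suffices to prove: for a bridge splitting $(S^3,U)=(B^+,T^+)\cup_\Sigma(B^-,T^-)$ of the $c$-component unlink with $\b(\Sigma,U)=\b$, one has $d(\mc{D}^+,\mc{D}^-)=\b-c$, where $\mc{D}^\pm$ are the c-disc sets.

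For the upper bound I would induct on $\b-c$. When $\b=c$, Lemma~\ref{std unlink surface}(1) gives reducing spheres exhibiting $U$ as the totally split unlink on a standard bridge sphere; I would then write down the pants decomposition consisting of nested reducing curves $\gamma_1,\dots,\gamma_{c-1}$ (each separating an initial block of components) together with cut-reducing curves $\delta_1,\dots,\delta_{c-2}$ (each encircling one additional strand), verify that all $2c-3$ of these bound c-discs on both sides, and conclude $\mc{D}^+\cap\mc{D}^-\neq\emptyset$. For the inductive step, if $\b>c$ then some component meets $\Sigma$ in at least four points, so by Lemma~\ref{std unlink surface}(3) there is a perturbing pair; deperturbing produces a bridge sphere $\Sigma'$ with $2\b-2$ punctures for which $d(\mc{D}'^+,\mc{D}'^-)\le\b-1-c$ by induction. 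The perturbation determines a curve $\mu$ on $\Sigma$ encircling the new bridge, and pants decompositions of $\Sigma$ containing $\mu$ correspond naturally to pants decompositions of $\Sigma'$ together with one auxiliary curve in the new region; running the induced path inside $\Sigma'$ while keeping $\mu$ fixed, and then performing one further $A$-move to reconcile the auxiliary curve with the target, realizes a path of length $(\b-1-c)+1=\b-c$ between $\mc{D}^+$ and $\mc{D}^-$.

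For the lower bound, note that if $\gamma$ is unmoved along a path from a vertex of $\mc{D}^+$ to a vertex of $\mc{D}^-$ then $\gamma$ belongs to both the initial and final pants decompositions, hence bounds c-discs on both sides; by Definition~\ref{def_curve_types}, $\gamma$ is a reducing curve or a cut-reducing curve for $(S^3,U)$. Thus the unmoved curves form a collection $\Gamma$ of pairwise disjoint, pairwise non-isotopic reducing/cut-reducing curves contained in one pants decomposition, and I claim $|\Gamma|\le\b+c-3$. Granting this, at least $(2\b-3)-(\b+c-3)=\b-c$ of the $2\b-3$ curves of the initial decomposition are moved, so the path has length at least $\b-c$ since its length is at least the number of distinct curves moved. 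To prove the claimed bound I would use the initial disc system to realize the $B^+$-c-discs of $\Gamma$ pairwise disjointly and the final disc system to do the same in $B^-$, obtaining pairwise disjoint essential $2$-spheres $P_\gamma=D^+_\gamma\cup_\gamma D^-_\gamma$ each meeting $U$ transversely in $0$ or $2$ points (in $2$ points only on a single component). These spheres cut $S^3$ into a tree of balls; the reducing spheres contribute at most $c-1$ edges, and for each component $U_m$, in $\ell_m$-bridge position, the cut-reducing spheres clasping it form the edge set of a subtree and, by a laminar-family count on the cyclic arrangement of punctures of $U_m$ that is compatible with an actual pants decomposition, contribute at most $\ell_m-1$ edges, and at most $\ell_m-2$ when $c=1$. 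Summing (and using $\sum_m\ell_m=\b$) gives $|\Gamma|\le\b+c-3$ in all cases.

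The main obstacle is precisely this last count: controlling how many pairwise disjoint reducing and cut-reducing curves can coexist in a single pants decomposition of $\Sigma$. The bound $c-1$ on reducing curves and the subtree structure of the clasping spheres are routine, but pinning down the contribution of the cut-reducing curves requires using both that each clasping sphere bounds a ball containing a sub-arc of a single \emph{unknotted} component and that the curves extend to a genuine pants decomposition, which excludes the many ``small'' curves separating two strands of the same sign; the $c=1$ case, where the naive estimate is off by one and the extremal ``parallel'' cut-reducing curves become isotopic, is the delicate point and is exactly what forces the bound $\b+c-3$ rather than $\b+c-2$.
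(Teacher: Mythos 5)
The paper itself does not prove this statement; it imports it verbatim as Lemma 5.6 of \cite{blair2020kirby}, so your argument has to stand on its own. Your reduction to a single bridge splitting of a $c$-component unlink and your upper-bound induction (base case $\b=c$ with a common vertex of the two disc sets, then one deperturbation per step via Lemma \ref{std unlink surface}, lifting a path for $\Sigma'$ by freezing the curve $\mu$ and spending one extra $A$-move inside it) is the standard route and is fine modulo routine details. The genuine gap is in the lower bound, in the static count $|\Gamma|\le \b+c-3$ for a family $\Gamma$ of disjoint, non-isotopic reducing/cut-reducing curves inside one pants decomposition. Both of your per-type bounds are false. For the reducing curves: take the totally split unlink with $c$ components each in $1$-bridge position ($\b=c$); the pants decomposition consisting of the $c$ curves encircling the individual components together with $c-3$ further curves nesting them consists entirely of reducing curves, so a disjoint family of reducing curves can have $2c-3$ members, not $c-1$ --- the ``tree of balls'' count fails because complementary regions of the reducing spheres need not contain any component of $L$. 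In particular your summed estimate $(c-1)+\sum_m(\ell_m-1)=\b-1$ is itself not an upper bound for $|\Gamma|$ once $c\ge 3$, so the bookkeeping cannot be repaired by re-summing; only the weaker target $\b+c-3$ has a chance of being true.

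The per-component bound $\ell_m-1$ for cut-reducing curves also fails, because a cut-reducing curve for $U_m$ may encircle entire \emph{other} components. For example, with $c=3$, $U_1,U_2$ one-bridge (punctures $1,2$ and $3,4$) and $U_3$ two-bridge (punctures $5,\dots,8$), all split, the curves enclosing $\{5,6,7\}$ and $\{1,2,5,6,7\}$ are disjoint, non-isotopic, and both bound cut discs on both sides (band the compressing discs for the $\{1,2\}$-curve to the cut discs for the $\{5,6,7\}$-curve), so $U_3$ carries two cut-reducing curves although $\ell_3-1=1$; the total in that pants decomposition is still $4=\b+c-3$ only because a reducing curve is sacrificed. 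This trade-off between the laminar family of reducing curves and the cut-reducing curves is exactly the interaction your count ignores, and it is where the real content of the lower bound lies (your parity observation does give the correct bound $\b-2$ on the number of odd curves, which settles $c\le 2$, but not $c\ge 3$). So the inequality you need is plausibly true, but it is not established by the proposal; you would need either a genuine inductive argument on the sphere system that couples the two families, or a different lower-bound mechanism altogether (e.g.\ the path-based counting used by Zupan and in \cite{blair2020kirby}) rather than a static bound on $|\Gamma|$. The remaining ingredients of your lower bound (unmoved curves bound c-discs on both sides, simultaneous disjoint realization of the c-discs, length at least the number of moved curves) are fine.
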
 

{\color{blue}
\begin{figure}[h!] \label{fig_L-invarant}
\labellist \small\hair 2pt  
\pinlabel {$T_{12}$}  at 100 295 
\pinlabel {{\color{white}$T_{13}$}}  at 450 505 
\pinlabel {{\color{white}$T_{23}$}}  at 510 105 
\pinlabel \scriptsize$p_{12}^1$  at 166 376 
\pinlabel \scriptsize$p_{12}^2$  at 162 200 
\pinlabel \scriptsize$p_{23}^2$  at 386 90 
\pinlabel \scriptsize$p_{13}^1$  at 356 490 
\pinlabel \scriptsize$p_{13}^3$  at 519 401 
\pinlabel \scriptsize$p_{23}^3$  at 522 216 
\pinlabel {
\begin{rotate}{-24}
{\scriptsize
$\b(\mathcal{T})-c_2$
}
\end{rotate}
}
at 170 70
\pinlabel {
\begin{rotate}{30}
{\scriptsize
$\b(\mathcal{T})-c_1$
}
\end{rotate}
}
at 150 480
\pinlabel 
\scriptsize
$\b(\mathcal{T})-c_3$
at 700 295 
\endlabellist \centering 
\includegraphics[width=0.35\textwidth]{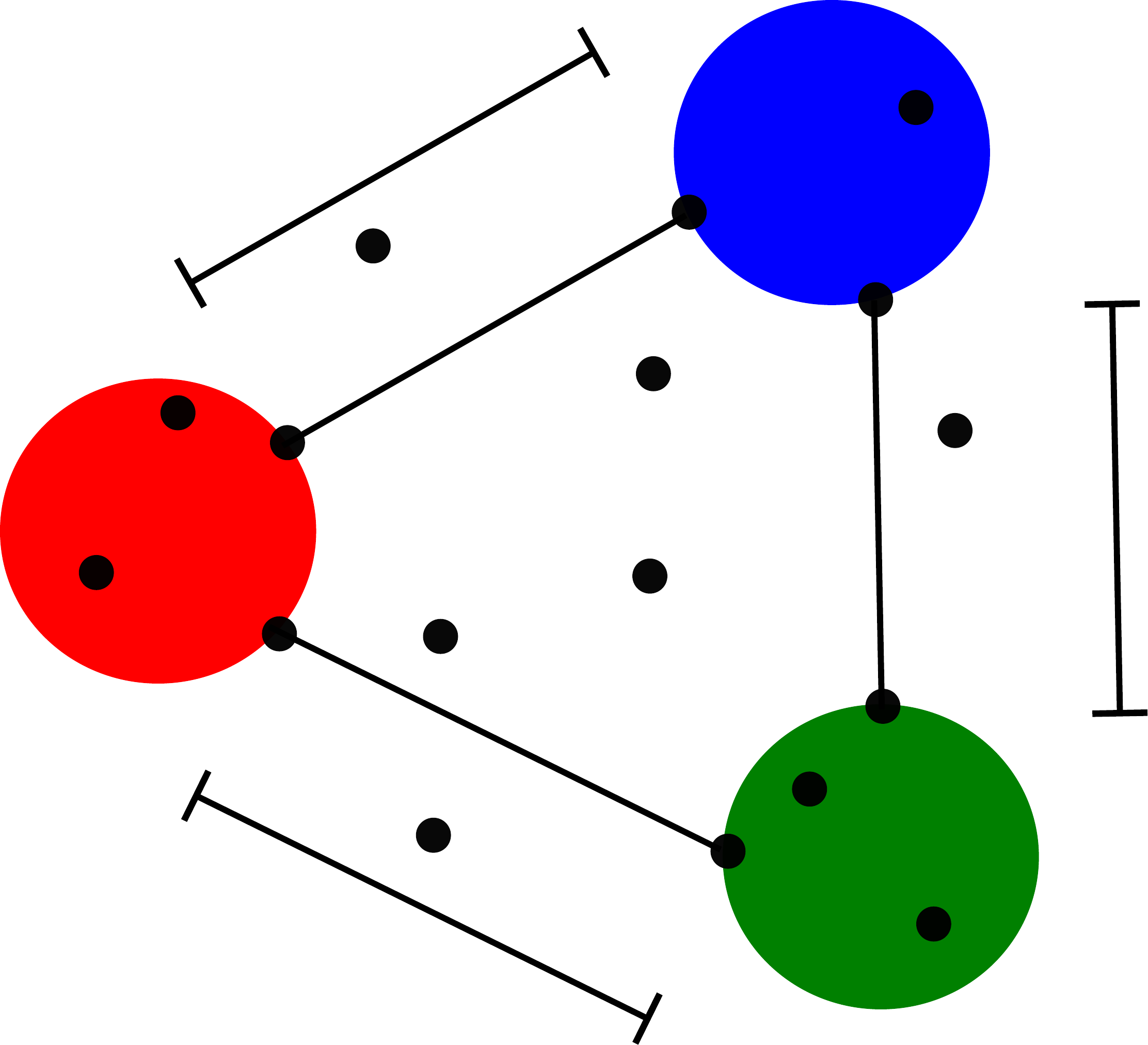}
\caption{Defining $\mathcal{L}(T)$ via efficient defining pairs. The ellipses represent the disk sets. The line joining $p_{ij}^{i}$ to $p_{ij}^{j}$ represents a geodesic path in the pants complex, which has length $\b(\mathcal{T})-c_i$ for a $(\b(\mathcal{T}),c_1, c_2, c_2)$-bridge trisection.}
\label{fig_L_invariant}
\end{figure}}

\subsection{Reducibility and Stabilization of Bridge Trisection}

We provide two related ways in which a bridge trisection may have higher bridge number than necessary: reducibility and stabilization. 

\begin{definition}
Given two trisections $\mc{T}_i$ for surfaces $S_i$ ($i = 1,2$) in distinct copies of $S^4$, their \defn{distant sum} is the trisection obtained by taking the connected sum of the two copies of $S^4$ using a point on each trisection surface disjoint from the surfaces. Their \defn{connected sum} is the trisection obtained by taking the connected sum of the two copies of $S^4$ using punctures on the two trisection surfaces. For more details see \cite{meier2017bridge}. A trisection with trisection surface $\Sigma$ is \defn{reducible} if there exists an essential simple closed curve in $\Sigma$ bounding a c-disk in each tangle forming the spine. 
\end{definition}

\begin{lemma}\label{reduciblenonmin}
If $S$ is a knotted 2-sphere with $\b(S) \leq 7$, then no bridge trisection of minimal bridge number is reducible.
\end{lemma}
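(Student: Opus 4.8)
The plan is to argue by contradiction: suppose $\mc{T}$ is a bridge trisection of $S$ with $\b(\mc{T}) = \b(S) \leq 7$ that is reducible, so there is an essential curve $\gamma \subset \Sigma$ bounding a c-disc in each of the three tangles $(B_{12}, T_{12})$, $(B_{23}, T_{23})$, $(B_{31}, T_{31})$. First I would dispose of the cut-disc case: if $\gamma$ bounds a cut disc in some tangle, then since $S$ is a sphere (hence connected with even intersection pattern), an Euler characteristic/parity count constrains how many punctures lie on each side of $\gamma$; I would show that a cut-reducing curve forces a perturbation or a contradiction with $\b(\mc{T}) = \b(S)$, reducing to the case where $\gamma$ bounds honest compressing discs in all three tangles.

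Next, in the compressing-disc case, I would compress all three tangles along $\gamma$ simultaneously. Compressing the spine along a reducing curve exhibits $\mc{T}$ as a distant sum (or connected sum) of two bridge trisections $\mc{T}_1$ and $\mc{T}_2$ of surfaces $S_1$ and $S_2$ with $S = S_1 \# S_2$ or $S = S_1 \sqcup S_2$; since $S$ is a knotted 2-sphere, one of the summands, say $S_2$, is an unknotted sphere and $S_1$ is a knotted 2-sphere with $\b(S_1) + \b(S_2) = \b(\mc{T})$ (up to the usual $-1$ bookkeeping for connected sums). Here I would invoke the additivity/subadditivity of bridge number under (distant) sum from Meier–Zupan, together with the fact that an unknotted sphere summand has bridge number at least $1$, to conclude $\b(S_1) < \b(\mc{T}) = \b(S)$. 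The key point is that $S_1$ must still be knotted — otherwise $S$ itself would be unknotted, contradicting that $S$ is knotted — so $\b(S_1) \geq \b($ some genuinely knotted sphere $)$.

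The main obstacle, and the step I expect to require the most care, is pinning down exactly which small knotted 2-spheres can occur and ruling them out using the hypothesis $\b(S) \leq 7$. The strategy is: a knotted 2-sphere has bridge number at least $3$ (no knotted sphere is $1$- or $2$-bridge, since the pairwise unions in a trisection must realize the meridional rank, which is $\geq 2$ for a knotted sphere, forcing $c_i \geq 2$ and hence $\b \geq 4$ as in Theorem \ref{thm_spun_are_42} and Lemma \ref{all 2}). So the knotted summand $S_1$ has $\b(S_1) \geq 4$. If $\mc{T}$ is reducible with $\b(\mc{T}) \leq 7$, then writing $\b(\mc{T}) = \b(S_1) + (\text{contribution of the unknotted part}) \geq 4 + 1$, and iterating the reduction on any further reducing curves, one finds $\b(S) = \b(\mc{T})$ can only be achieved if $S_1$ already realizes its own minimal bridge number inside $\mc{T}_1$; but then $\b(S_1) \leq 6 < \b(S)$, so $S$ admits a bridge trisection of bridge number strictly less than $\b(\mc{T})$ obtained by deleting the trivial unknotted summand, contradicting minimality of $\mc{T}$. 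I would finish by checking the low-bridge-number arithmetic carefully — in particular that an unknotted sphere summand contributes at least $1$ to the bridge number and that connected sum of trisections adds bridge numbers minus $1$ — so that the inequality $\b(S_1) < \b(S)$ is strict in every case with $\b(S) \leq 7$.
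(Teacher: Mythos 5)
Your overall strategy matches the paper's: decompose the reducible trisection as a connected/distant sum $\mc{T}_1 \# \mc{T}_2$, argue that one summand is unknotted so that the other gives a strictly smaller trisection of $S$, contradicting minimality. But there are two concrete problems. First, your pivotal claim that ``one of the summands, say $S_2$, is an unknotted sphere'' is not a formal consequence of $S$ being a knotted $2$-sphere: a connected sum of two knotted $2$-spheres is again a knotted $2$-sphere, so $S$ being knotted only tells you that \emph{at least} one summand is knotted, not \emph{at most} one. The paper derives the unknottedness of one summand from bridge number arithmetic, not from the topology of $S$: since $\b(\mc{T}_1)+\b(\mc{T}_2)=\b(\mc{T})+1$ and each $\b(\mc{T}_i)\geq 2$, the bound on $\b(\mc{T})$ forces $\min(\b(\mc{T}_1),\b(\mc{T}_2))\leq 3$, and then Meier--Zupan's Theorem 1.8 says a trisection of bridge number at most $3$ is of an unknotted surface. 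Your final paragraph gestures at this ($\b(S_1)\geq 4$ via meridional rank) but never actually shows that the \emph{other} summand has bridge number $\leq 3$; you only show it's knotted, which is the wrong direction.

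Second, your handling of cut-discs versus compressing discs is reversed. If the reducing curve $\gamma$ bounds genuine compressing discs in all three tangles, it has an even number of punctures on each side and the corresponding reducing spheres are disjoint from $S$; the resulting decomposition is a \emph{distant sum}, making $S$ disconnected and giving an immediate contradiction since $S$ is a connected sphere. The substantive case, which produces the connected sum with the $\b(\mc{T})+1$ bookkeeping, is precisely when $\gamma$ bounds cut-discs in all three tangles. (Note there is no mixed case, since the number of punctures inside $\gamma$ has a fixed parity.) So the case you propose to ``dispose of'' is the one that carries all the content, and the case you propose to retain is the one that's trivially impossible. Once these two points are fixed, the argument reduces to the paper's.
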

\begin{proof}
As explained in \cite{blair2020kirby}, if a trisection $\mc{T}$ were a reducible (4,2)-bridge trisection for $S$, then it would be the connected sum of two other trisections $\mc{T}_1$ and $\mc{T}_2$, such that $\mathfrak{b}(\mc{T}_1) + \mathfrak{b}(\mc{T}_1) = \b(\mc{T}) + 1 \leq 7$ and each has bridge number at least 2.  In particular, either $\mc{T}_1$ or $\mc{T}_2$ would have bridge number at most 3, implying that the corresponding surface is unknotted by \cite[Theorem 1.8]{meier2017bridge}. In which case, the other trisection is a trisection for $S$ of smaller bridge number than $\mc{T}$.
\end{proof}

\begin{lemma}
\label{lem_1}
Suppose that $\mc{T}$ is a bridge trisection with spine $\bigcup\limits_{i \neq j}(B_{ij}, T_{ij})$. Then $\mc{T}$ is reducible or stabilized if and only if there is an essential curve $\gamma$ bounding a c-disk in each $(B_{ij}, T_{ij})$. Furthermore, such a curve is a reducing or cut-reducing curve (respectively) for each link $L_j=T_{ij}\cup\T_{jk}$.
\end{lemma}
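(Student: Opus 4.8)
The statement has two directions plus a "furthermore" clause. The plan is to handle the reverse (easy) direction first, then the forward direction, and finally the statement about reducing curves. For the reverse direction: suppose $\gamma \subset \Sigma$ is an essential curve bounding a c-disk $D_{ij}$ in each tangle $(B_{ij}, T_{ij})$. If all three $D_{ij}$ are compressing disks, the trisection is reducible by definition. If at least one is a cut-disk, then since a curve on a sphere separates its punctures the same way regardless of which side we view it from, the parity of punctures on each side is fixed; a curve bounding a cut-disk in one tangle must bound a cut-disk (not a compressing disk) in every tangle whose disk it bounds, because $|D\cap T_{ij}| \le 1$ together with the requirement that the boundary be essential forces $|D\cap T_{ij}| = 1$ whenever the curve is an odd curve. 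So if one is a cut-disk, all three are, and this is precisely the standard definition of a stabilized bridge trisection (I would cite \cite{meier2017bridge} or \cite{blair2020kirby} for the equivalence with the ``one curve bounding cut-disks on all three sides'' formulation). So reducible-or-stabilized follows.

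For the forward direction: if $\mc{T}$ is reducible, the defining curve already bounds a compressing disk, hence a c-disk, in each tangle — nothing to prove. If $\mc{T}$ is stabilized, I would unwind the definition of stabilization (destabilization undoes a perturbation of the trisection): a stabilized trisection is one obtained by a certain elementary perturbation move, and the inverse move is detected exactly by the presence of a curve bounding cut-disks in all three tangles. Here I expect to lean on the characterization of (de)stabilization from \cite{meier2017bridge}, so the work is mostly translating their move-based definition into the disk-set language used in this paper. Alternatively, if ``stabilized'' is being taken in this paper to mean by definition ``admits such a cut-disk curve,'' then this direction is immediate and only the reverse direction and the furthermore clause carry content; I would state clearly which convention is in force.

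For the ``furthermore'' clause: fix $j$ and consider the link $L_j = T_{ij} \cup \T_{jk}$ living in $B_{ij}\cup B_{jk}$ with bridge sphere $\Sigma$. The curve $\gamma$ bounds a c-disk in $(B_{ij},T_{ij})$ and a c-disk in $(B_{jk},\T_{jk})$ — these are the two tangles on opposite sides of $\Sigma$ in the splitting of $L_j$. If $\gamma$ bounds compressing disks on both sides, then by Definition \ref{def_curve_types} it is a reducing curve for $(S^3, L_j)$; if it bounds cut-disks on both sides, it is a cut-reducing curve. The only thing to check is consistency: one cannot have a compressing disk on one side and a cut-disk on the other, which is again the parity-of-punctures observation ($\gamma$ is an even curve in the first case and an odd curve in the second, and this is a property of $\gamma$ in $\Sigma$ independent of side). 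Since we already argued in the reverse direction that the three c-disks are either all compressing or all cut-disks, the same type propagates to every pair $\{i,j\}$, giving the claim for each $L_j$ simultaneously.

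The main obstacle I anticipate is the forward direction's dependence on the precise definition of ``stabilized'' for bridge trisections: matching the move-theoretic definition of Meier--Zupan to the internal disk-set language requires care, and I would want to pin down whether the paper intends this lemma as a genuine equivalence or as essentially a restatement of definitions. The parity/consistency arguments are routine once phrased in terms of odd versus even curves.
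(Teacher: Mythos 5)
Your parity observation — that an odd (respectively even) curve forces every c-disk with that boundary to be a cut-disk (respectively compressing disk), so the three c-disks are all of the same type and $\gamma$ is a cut-reducing (respectively reducing) curve for each $L_j$ — is correct and does carry the ``furthermore'' clause. But there are two real problems with the rest.

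First, you have misread the paper's definition of \emph{reducible}. The definition in the text is already stated with c-disks, not compressing disks: ``$\mc{T}$ is reducible if there exists an essential simple closed curve in $\Sigma$ bounding a c-disk in each tangle forming the spine.'' With that definition in force, the entire backward implication of the lemma is literally the definition of reducibility, and you do not need to (and should not) route the cut-disk case through ``stabilized.'' Your detour through stabilization in the reverse direction is therefore both unnecessary and, as noted next, unsound.

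Second, the claim that ``there is a curve bounding a cut-disk in each tangle'' is ``precisely the standard definition of a stabilized bridge trisection'' is not correct, and citing \cite{meier2017bridge} or \cite{blair2020kirby} does not make it so. Meier and Zupan define \emph{stabilized} move-theoretically (as the result of an elementary stabilization), and the paper even records their criterion in Lemma~\ref{gen_criterion_destab} as a \emph{sufficient} condition phrased in terms of shadow arcs, not as an equivalence with the existence of a cut-reducing curve. A cut-reducing curve in each tangle means the trisection is a connected sum of two trisections (this is the ``reducible $\Rightarrow$ connected sum'' fact used in Lemma~\ref{reduciblenonmin}), and in general neither summand need be trivial, so reducibility via cut-disks does not yield stabilization. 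The only genuinely non-trivial content in the ``if and only if'' is the implication ``stabilized $\Rightarrow$ there is a c-disk curve,'' and your treatment of it (``unwind the definition of stabilization; the inverse move is detected exactly by such a curve'') is an assertion, not an argument. Finally, the paper's proof explicitly leans on Lemma~\ref{std unlink surface} (e.g.\ to realize the compressing or cut disks on the two sides of $\Sigma$ as a reducing or cut-reducing sphere for each $L_j$, and in the stabilization direction via perturbing pairs); your plan never touches that lemma, which is a sign that the route you sketch is not the one the paper takes.
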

\begin{proof}
This follows easily from Lemma \ref{std unlink surface}. 
\end{proof}

In \cite[Section 6]{meier2017bridge}, Meier and Zupan define what it means for a bridge trisection to be \defn{stabilized}. This is the analogous to a ``perturbed bridge surface'' for knots in 3-manifolds or to ``stabilized Heegaard splittings'' of 3-manifolds. While we do not need the precise definition of stabilization, we need the following two results, both from \cite{meier2017bridge}.

\begin{lemma}
\label{stabnonmin}
If $S \subset S^4$, then no stabilized bridge trisection of $S$ has minimal bridge number.
\end{lemma}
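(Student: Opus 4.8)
The plan is to reduce the statement to the structural behaviour of the stabilization move, which was established by Meier and Zupan, rather than to reprove anything about perturbations from scratch. First I would recall their definition: a bridge trisection $\mc{T}$ of $S$ is \emph{stabilized} precisely when it is obtained from some bridge trisection $\mc{T}'$ of the \emph{same} surface $S$ by applying one or more \emph{elementary stabilization} moves. The three facts I would quote from \cite{meier2017bridge} are that an elementary stabilization is a local operation, supported in a neighbourhood of an arc lying in one of the pieces of the spine; that it does not change the isotopy class of $S\subset S^4$; and that its effect on the numerical data is $\b(\mc{T}')\mapsto \b(\mc{T}')+1$ (with exactly one patch number $c_i$ also increasing by $1$, so that $\chi(S)=c_1+c_2+c_3-\b$ is preserved).

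Granting those facts, the argument is immediate. If $\mc{T}$ is a stabilized bridge trisection of $S$, then by definition there is a bridge trisection $\mc{T}'$ of the same surface with $\b(\mc{T}')=\b(\mc{T})-1$. Hence
\[ \b(S)\;\leq\;\b(\mc{T}')\;=\;\b(\mc{T})-1\;<\;\b(\mc{T}), \]
so $\mc{T}$ cannot realize the minimal bridge number $\b(S)$. This is the entirety of the bookkeeping.

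The hard part is therefore not anything we carry out here: it is the claim that the elementary stabilization move can be reversed within the category of bridge trisections, dropping the bridge number by exactly one without altering $S$ up to ambient isotopy. I would not attempt to reprove this. It is the bridge-trisection analogue of the fact that a perturbed bridge surface of a link in a $3$-manifold admits a deperturbation reducing its bridge number, and the careful verification (that the local picture really does destabilize, and that the ambient surface is unchanged) is exactly what is done in Section~6 of \cite{meier2017bridge}. So the proof in our setting consists of invoking that structural result and then reading off the inequality displayed above.
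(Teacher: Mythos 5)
Your proposal is correct and coincides with the paper's own treatment, which is simply to cite Meier--Zupan: with ``stabilized'' meaning (as in their Section~6) obtained from a bridge trisection of the same surface by elementary stabilizations, each raising the bridge number by one, the inequality $\b(S)\leq\b(\mc{T}')<\b(\mc{T})$ is immediate. The only cavil is that your penultimate paragraph overcomplicates matters: under this constructive definition there is nothing to ``reverse,'' since the lower-bridge trisection $\mc{T}'$ exists by definition rather than as the conclusion of a destabilization theorem (that kind of local verification underlies the separate stabilization criterion, Lemma~\ref{gen_criterion_destab}, which is not needed for the present lemma).
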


\begin{lemma}[{Stabilization Criterion \cite[Lemma 6.2]{meier2017bridge}}]\label{gen_criterion_destab}
Let $\mc{T}$ be a bridge trisection with spine \[(B_{12},T_{12})\cup (B_{23},T_{23})\cup(B_{31},T_{31}).\] If for some $\{i,j,k\} = \{1,2,3\}$, there exists a collection of shadow arcs $\alpha$ for $(B_{ij},T_{ij})$ and $\beta$ for $(B_{jk},T_{jk})$ and a single shadow arc $\gamma$ for $(B_{ik},T_{ik})$ such that the interiors of all the shadow arcs are disjoint and the following two conditions hold, then $\mc{T}$ is stabilized:
\begin{enumerate}
    \item The union $\alpha \cup \beta$ is a simple closed curve (ignoring the punctures)
    \item Exactly one endpoint of $\gamma$ lies on $\alpha \cup \beta$.
\end{enumerate}
\end{lemma}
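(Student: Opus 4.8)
We want to produce the witness of stabilization supplied by Lemma \ref{lem_1}: an essential simple closed curve $\delta\subset\Sigma$ that bounds a c-disc in each of the three tangles $(B_{ij},T_{ij})$, $(B_{jk},T_{jk})$, $(B_{ik},T_{ik})$ of the spine, and in fact bounds a \emph{cut} disc in each. Granting such a $\delta$, Lemma \ref{lem_1} immediately gives that $\mc{T}$ is reducible or stabilized, and the ``furthermore'' clause of that lemma identifies a curve bounding cut discs on all sides as a \emph{cut}-reducing curve for each link $L_j=T_{ij}\cup T_{jk}$ --- which is the stabilized case rather than the reducible one. Equivalently and more geometrically, the three cut discs together with $\delta$ assemble the data of a destabilization: for each $j$ the two cut discs in $B_{ij}$ and $B_{jk}$ glue along $\delta$ to a $2$-sphere in $B_{ij}\cup B_{jk}$ meeting $L_j$ in exactly two points, and these spheres simultaneously exhibit $\mc{T}$ as the connected sum of a bridge trisection of $S$ of one lower bridge number with the standard small bridge trisection of an unknotted $2$-sphere whose connected sum realizes a stabilization. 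So the plan is: build $\delta$ and the three cut discs, then quote Lemma \ref{lem_1}.

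First I would fix notation. Since shadow arcs have puncture-free interiors, condition (1) makes $c:=\alpha\cup\beta$ a simple closed curve whose only intersections with $S\cap\Sigma$ are the common endpoints of the arcs, and these arcs alternate between $\alpha$ and $\beta$ around $c$; consequently $\alpha$ and $\beta$ have equally many arcs, and the corresponding strands of $T_{ij}$ and $T_{jk}$ --- each of which comes with a bridge disc, these being shadow arcs --- link up into a single trivial component of $L_j$ isotopic into $\Sigma$ along $c$. Condition (2) names a puncture $x\in c$ that is one endpoint of a strand $g$ of $T_{ik}$ with shadow $\gamma$, the other endpoint $z$ of $g$ lying off $c$. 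I would then take $\delta$ to be a boundary curve of a regular neighborhood in $\Sigma$ of an appropriate part of $c\cup\gamma$ through $x$ --- chosen so that the shadow arc $\gamma$, the ``$\alpha$'' shadows, and the ``$\beta$'' shadows of all strands meeting the relevant punctures lie inside this neighborhood --- and a direct count of punctures on the two sides of $\delta$ shows it is essential except when $\b(\mc{T})\le 2$; in that range $S$ is unknotted and $\mc{T}$ is non-minimal by \cite{meier2017bridge}, hence already stabilized, so those cases are disposed of separately.

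The heart of the matter is checking that $\delta$ bounds a cut disc in each of the three tangles. In a tangle $(B_\bullet,T_\bullet)$ one takes the obvious disc bounded by $\delta$, pushed slightly into $B_\bullet$, made to bulge high over the part of the neighborhood carrying the relevant shadow arc(s) and kept shallow near the rest of $\delta$. Using the bridge discs to isotope the strands with \emph{both} endpoints inside the neighborhood so that they run parallel to $\Sigma$ underneath the bulge, and using triviality of $T_\bullet$ to push the remaining strands into a collar away from the neighborhood, this disc then meets $T_\bullet$ in exactly one point --- the crossing coming from the unique strand that has exactly \emph{one} endpoint inside the neighborhood. It is precisely here that condition (2) enters: with $\gamma$ having exactly one endpoint on $c$, the strand $g$ of $T_{ik}$ contributes a single intersection, whereas both endpoints on $c$ (three parallel strands) would force a different, merely reducible configuration, and no endpoint on $c$ would fail to interact with the neighborhood at all; similarly, without the bridge discs $A$, $B$, $C$ the ``two endpoints inside'' strands could not be tucked away.

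The main obstacle is this construction itself: one must choose the neighborhood --- possibly after a preliminary bridge-sphere isotopy --- so that \emph{simultaneously} in all three tangles (i) $\delta=\boundary N$ is essential, (ii) the strands with both endpoints inside $N$ are exactly those whose bridge discs ($A$, $B$, or $C$) have shadows inside $N$, so they can be isotoped under the bulge, and (iii) no other strand of any tangle is forced to meet the disc. Since a regular neighborhood of the ``loop with a tail'' $c\cup\gamma$ is an annulus rather than a disc, part of the work is selecting the right one of its two boundary curves and controlling where the remaining punctures fall; this, together with the careful intersection bookkeeping and the low-complexity exceptional cases, is essentially all of the content. Once $\delta$ and the three cut discs are in place, Lemma \ref{lem_1} and its cut-reducing clause finish the proof.
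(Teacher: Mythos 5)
Note first that the paper offers no proof of this lemma to compare against: it is imported verbatim as Lemma 6.2 of \cite{meier2017bridge}, so your attempt has to stand on its own. It does not, and the central problem is the endgame. Even if you could produce an essential curve $\delta$ bounding a cut disc in each of $(B_{ij},T_{ij})$, $(B_{jk},T_{jk})$, $(B_{ik},T_{ik})$, Lemma \ref{lem_1} (together with the paper's definition of reducibility, which is itself phrased in terms of c-discs) only yields that $\mc{T}$ is \emph{reducible or stabilized}; the ``(respectively)'' clause says that a curve witnessing the stabilized case is cut-reducing for each $L_j$, not the converse implication ``cut discs on all three sides $\Rightarrow$ stabilized''. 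That converse is genuinely false: a connected sum of two unstabilized bridge trisections formed along punctures, as defined in this paper, carries exactly such a curve (the connect-sum sphere meets each $B_{ij}$ in a cut disc) without being stabilized. So no argument whose only output is ``$\delta$ bounds a cut disc in every tangle'' can prove the criterion. You gesture at the missing ingredient --- that one side of $\delta$ is the standard small trisection of an unknotted sphere, so that the connected-sum decomposition is a perturbation --- but you never verify it, and your $\delta$ (a boundary curve of a neighborhood of all of $\alpha\cup\beta\cup\gamma$) encloses $2a+1$ punctures with completely uncontrolled tangles on both sides, so there is no reason one side should be standard. Meier--Zupan's proof works directly with the shadow-arc configuration to exhibit the perturbation; it does not factor through a reducing-type curve, precisely because such a curve cannot distinguish stabilization from connected-sum reducibility.

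There are two further gaps in the construction itself. First, the existence of the three cut discs is not established: you dispose of the strands with both endpoints outside the neighborhood by ``using triviality of $T_\bullet$ to push the remaining strands into a collar away from the neighborhood,'' but whether a given curve bounds a c-disc in a trivial tangle is a nontrivial condition (the disc sets $\mc{D}_{ij}$ are proper subsets of $\mc{P}(\Sigma)$ --- most curves bound no c-disc at all), and in $(B_{ik},T_{ik})$ the hypotheses give you information about only the single strand with shadow $\gamma$; the remaining strands of $T_{ik}$ may well be forced to meet any disc bounded by your $\delta$, and no isotopy argument is offered that avoids them. Second, your treatment of the low-complexity exceptional cases invokes ``$S$ unknotted and $\mc{T}$ non-minimal, hence already stabilized,'' but non-minimality is not known to imply stabilization --- indeed the introduction of this very paper exhibits candidate non-stabilized, non-minimal bridge trisections --- so even the edge cases are not closed by your argument.
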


Noting that the union of an arc with an isotopic copy having interior disjoint from the original is a circle, produces the following criterion we'll use repeatedly.

\begin{lemma}\label{criterion_destab}
Let $\mc{T}$ be a bridge trisection with spine \[(B_{12},T_{12})\cup (B_{23},T_{23})\cup(B_{31},T_{31}).\] Suppose that there exist $\{i,j,k\} = \{1,2,3\}$ so that there is a shadow arc $\alpha$ for both $(B_{ij},T_{ij})$ and $(B_{jk},T_{jk})$ and a shadow arc $\gamma$ for $(B_{ik},T_{ik})$ sharing  exactly one endpoint with $\alpha$ and with interior disjoint from $\alpha$. Then $\mc{T}$ is stabilized.
\end{lemma}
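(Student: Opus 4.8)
The plan is to deduce the statement directly from the Meier--Zupan stabilization criterion (Lemma~\ref{gen_criterion_destab}) by using an isotopic pushoff of $\alpha$ as the second of the two arcs whose union must be a circle. Lemma~\ref{gen_criterion_destab} asks for two shadow arcs, one on each of $(B_{ij},T_{ij})$ and $(B_{jk},T_{jk})$, with disjoint interiors and union a simple closed curve, together with a third shadow arc on $(B_{ik},T_{ik})$ meeting that curve in exactly one endpoint; here we already have a single arc $\alpha$ that is simultaneously a shadow arc for the first two tangles, so we ``duplicate'' it.

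First I would fix a thin regular neighborhood $N$ of $\alpha$ in $\Sigma$ and observe that $\gamma\cap\alpha$ is exactly the shared endpoint $p$: the interior of $\gamma$ is disjoint from $\alpha$ by hypothesis, and the other endpoint of $\gamma$ is a puncture distinct from the endpoints of $\alpha$, which cannot lie on $\alpha$ since the interior of a shadow arc contains no punctures. Shrinking $N$, the arc $\gamma$ meets $N$ in only a short sub-arc running out of $p$, and $N$ contains no punctures except the two endpoints of $\alpha$. Inside $N$ I would then choose an arc $\beta$ with the same endpoints as $\alpha$, cobounding a bigon with $\alpha$, whose interior avoids both $\alpha$ and $\gamma$: near $p$, let $\beta$ leave along the side of $\alpha$ opposite the sub-arc $\gamma\cap N$, and away from $p$ keep $\beta$ close enough to $\alpha$ to miss the interior of $\gamma$, which is bounded away from $\alpha$ off $p$.

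Next I would check that $\beta$ is a shadow arc for $(B_{jk},T_{jk})$: take a bridge disc $D\subset B_{jk}$ realizing $\alpha$, glue on the bigon between $\alpha$ and $\beta$ lying in $\Sigma$, and push the interior of the bigon slightly into $B_{jk}$; since the bigon meets the tangle only in the two endpoints of $\alpha$, the result is a bridge disc with shadow arc $\beta$. With this in hand I would apply Lemma~\ref{gen_criterion_destab} to the three shadow arcs $\alpha$ (for $(B_{ij},T_{ij})$), $\beta$ (for $(B_{jk},T_{jk})$), and $\gamma$ (for $(B_{ik},T_{ik})$): their interiors are pairwise disjoint by construction; $\alpha\cup\beta$ bounds a bigon and so is a simple closed curve once the punctures are ignored, giving condition (1); and of the two endpoints of $\gamma$, one is $p\in\alpha\cup\beta$ while the other is a puncture lying on neither $\alpha$ nor $\beta$, giving condition (2). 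Hence $\mc{T}$ is stabilized.

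I do not expect a genuine obstacle here; the only delicate point is the simultaneous general-position choice of the pushoff $\beta$ in the second step, where one must avoid re-introducing an intersection with $\gamma$ near the shared puncture $p$ — resolved by pushing $\beta$ off $\alpha$ on the side away from $\gamma$ and staying within a sufficiently thin neighborhood of $\alpha$.
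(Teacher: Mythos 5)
Your proof is correct and is essentially the approach the paper takes: the one-sentence remark preceding the lemma says exactly that the union of $\alpha$ with an isotopic copy having disjoint interior is a circle, which reduces the statement to Lemma~\ref{gen_criterion_destab}. You have simply spelled out the general-position details (choosing the pushoff $\beta$ on the side of $\alpha$ away from $\gamma$, and checking that $\beta$ is still a shadow arc and that the second endpoint of $\gamma$ misses $\alpha\cup\beta$), all of which are the routine verifications the paper leaves implicit.
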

\begin{figure}[h]
\centering
\includegraphics[width=.3\textwidth]{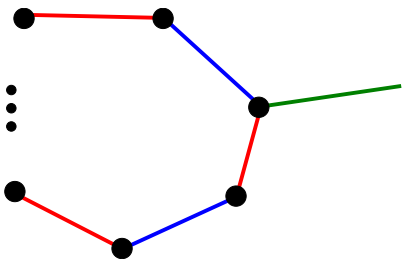}
\caption{The arrangement of arcs from Lemma \ref{gen_criterion_destab}.}
\end{figure}

We note that in \cite{blair2020kirby}, the authors show that if a $(b;c_1, c_2, c_3)$-bridge trisection $\mc{T}$ of a knotted surface $S$ is not reducible, then
\[
\mc{L}(\mc{T}) \geq 2(c_1+c_2+c_3)-8.
\]
If $\mc{T}$ is a (4,2)-bridge trisection, this inequality translates to $\Lcal (\mc{T})\geq 2\cdot6-8 = 4$. The goal of Section \ref{section_combinatorics} is to further improve this estimate in Theorem \ref{thm_lower_bound}.

\section{Combinatorics of $(4,2)$-bridge trisections}\label{section_combinatorics}

This section studies relations among pairs of pants decompositions of a trisection surface $\Sigma$ having 8 punctures. For each $\{i,j,k\}=\{1,2,3\}$, the link $L_i=T_{ij}\cup \T_{ik}$ is a 2-component unlink in 4-bridge position. 
%
We define an \textbf{inside} of a simple closed curve in $\Sigma$ to be a side with $\leq 4$ punctures and an \textbf{outside} to be a side with $>4$ punctures. Note that curves with four punctures on each side have two inside regions and no outside region. We say that a puncture or set of punctures is \defn{enclosed} by such a curve if the curve does not separate them and they are all inside the curve. Analyzing which curves in a pants decomposition can enclose which others, produces the next lemma:

\begin{lemma}\label{lemma_pants_structure}
Let $(p_{ij}^{i},p_{ik}^{i})$ be an efficient defining pair for $L_i$. Then, we may choose notation $p_{ij}^{i}=\{\gamma_1,\gamma_2,\gamma_3,f_1, f_2\}$ and  $p_{ik}^{i}=\{\gamma_1,\gamma_2,\gamma_3,f'_1, f'_2\}$ so that all of the following hold:
\begin{itemize}
    \item $\gamma_1$ is a reducing curve for $L_i$
    \item Both $\gamma_2$ and $\gamma_3$ are cut-reducing curves for $L_i$.
    \item $f_1, f_2$ bound compressing discs for $T_{ij}$ and $f'_1, f'_2$ bound compressing discs for $T_{ik}$
    \item Every geodesic from $p_{ij}^i$ to $p_{ik}^i$ moves $f_1$ to $f'_1$ and $f_2$ to $f'_2$ and $\gamma_1$, $\gamma_2$, and $\gamma_3$ are unmoved.
\end{itemize}
\end{lemma}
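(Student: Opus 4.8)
The plan is to read all the asserted structure off the single equality $d(p_{ij}^{i},p_{ik}^{i})=\b(\mc T)-c_i=4-2=2$ provided by Lemma~\ref{lem_efficient_pairs}. Fix an arbitrary geodesic $\alpha$ from $p_{ij}^{i}$ to $p_{ik}^{i}$. It has two edges, so at most two of the five curves of $p_{ij}^{i}$ are moved along $\alpha$; moreover, along a length-$2$ path a curve lying in both endpoints must also lie in the middle vertex (otherwise both edges would change that curve and the two endpoints would coincide), so the set $C:=p_{ij}^{i}\cap p_{ik}^{i}$ is exactly the set of curves unmoved by $\alpha$, and $|C|\geq 5-2=3$. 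Each $\gamma\in C$ bounds a c-disc in $(B_{ij},T_{ij})$ (as $p_{ij}^{i}\in\mc D_{ij}$) and in $(B_{ik},T_{ik})$ (as $p_{ik}^{i}\in\mc D_{ik}$). Since a curve bounding a compressing disc in a trivial tangle has an even number of punctures on each side while one bounding a cut disc has an odd number, a curve bounding c-discs on both sides bounds either compressing discs on both sides or cut discs on both sides; by Definition~\ref{def_curve_types}, such a $\gamma$ is a reducing curve or a cut-reducing curve for $L_i$. Write $r$ and $s$ for the numbers of reducing and cut-reducing curves in $C$, so $r+s=|C|\geq 3$.

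Next I would bound $r$ and $s$. First, $r\leq 1$: two disjoint reducing curves for the two-component link $L_i$ are isotopic, since each cleanly separates the two components and one properly nested in the other would fail to. Before bounding $s$ I would rule out the distribution in which a component $L_0$ of $L_i$ meets $\Sigma$ in only two punctures: by Lemma~\ref{std unlink surface}(2) such an $L_0$ bounds a disc meeting $\Sigma$ in a single arc, yielding a shadow arc $\mu$ serving simultaneously for $(B_{ij},T_{ij})$ and $(B_{ik},T_{ik})$; a third shadow arc for $(B_{jk},T_{jk})$ with exactly one endpoint on $\mu$ then triggers Lemma~\ref{criterion_destab}, and if instead both its endpoints lie on $\mu$ then the curve enclosing exactly the two punctures of $L_0$ bounds a compressing disc in all three tangles, so $\mc T$ is reducible. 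Either conclusion contradicts the fact that a $(4,2)$-trisection presents the knotted $2$-sphere $F$ in minimal bridge position (a knotted sphere has bridge number at least $4$) and is hence, by Lemmas~\ref{reduciblenonmin} and~\ref{stabnonmin}, neither reducible nor stabilized. So each component of $L_i$ meets $\Sigma$ in exactly four punctures.

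Now $s\leq 2$. A cut-reducing curve meets $L_i$ in exactly two points, which — a closed loop meeting a sphere an even number of times — lie on a single component, say $L_0$, so the curve is disjoint from the other component and its cut discs cross no arc belonging to that component; consequently the curve keeps together the endpoints of every arc not in $L_0$, which forces all four punctures of the other component to one side and exactly a three-element subset of $L_0$'s four punctures to the other. Any two distinct curves of this shape (three of the same four punctures inside) intersect, so at most one cut-reducing curve hitting $L_0$ can occur in a pants decomposition, and likewise at most one hitting $L_1$; hence $s\leq 2$. Together with $r\leq 1$ and $r+s\geq 3$ this forces $|C|=3$, $r=1$, $s=2$. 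Write $C=\{\gamma_1,\gamma_2,\gamma_3\}$ with $\gamma_1$ the reducing curve, $\gamma_2$ the cut-reducing curve hitting $L_0$ and $\gamma_3$ the one hitting $L_1$, set $\{f_1,f_2\}=p_{ij}^{i}\setminus C$ and $\{f'_1,f'_2\}=p_{ik}^{i}\setminus C$, and note $p_{ij}^{i}$ and $p_{ik}^{i}$ cannot share four curves (that would put them at distance $2$ with only one curve differing, excluded since $|C|=3$), so $\{f_1,f_2\}\cap\{f'_1,f'_2\}=\varnothing$.

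Finally I would pin down the $f$'s by a pants count. The curves $\gamma_1,\gamma_2,\gamma_3$ cut $\Sigma$ into a once-punctured annulus between $\gamma_1$ and $\gamma_2$, a once-punctured annulus between $\gamma_1$ and $\gamma_3$ (both already pairs of pants), and the two thrice-punctured discs inside $\gamma_2$ and inside $\gamma_3$; so a pants decomposition containing $\gamma_1,\gamma_2,\gamma_3$ is completed by exactly one curve in each thrice-punctured disc, and any such completing curve splits the three punctures as $2+1$, hence encloses two punctures, hence is an even curve and bounds a compressing disc. Applied to $p_{ij}^{i}\in\mc D_{ij}$ this shows $f_1,f_2$ bound compressing discs for $T_{ij}$; applied to $p_{ik}^{i}\in\mc D_{ik}$ it shows $f'_1,f'_2$ bound compressing discs for $T_{ik}$. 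Relabelling so that $f_1,f'_1$ lie inside $\gamma_2$ and $f_2,f'_2$ inside $\gamma_3$ (forced by the piece they must occupy, and $f_1\neq f'_1$, $f_2\neq f'_2$ by the previous paragraph), and using that $\gamma_1,\gamma_2,\gamma_3$ also lie in the middle vertex of $\alpha$, the two edges of $\alpha$ are $A$-moves supported in the two thrice-punctured discs, so they realize $f_1\mapsto f'_1$ and $f_2\mapsto f'_2$; since $\alpha$ was arbitrary, the last bullet follows. The step I expect to be the real work is the pair of bounds in the middle two paragraphs — excluding the degenerate puncture distribution and capping the disjoint cut-reducing curves at two — since these are where the topology of the unlink $L_i$ (via Lemma~\ref{std unlink surface} and the stabilization/reducibility criteria) and a careful check of which arcs a cut disc may meet, rather than curve combinatorics on $\Sigma$ alone, must be used.
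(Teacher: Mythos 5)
Your overall strategy — count unmoved curves, classify them via parity as reducing or cut-reducing, bound the number of each, and then read off the structure of $f_1,f_2$ from the complementary pants — is essentially the same as the paper's. But there is a genuine gap in the middle that the paper avoids.

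You attempt to \emph{rule out} the case in which a component of $L_i$ meets $\Sigma$ in only two punctures (the ``$2$--$6$'' split), and you then lean on the remaining ``$4$--$4$'' split both to bound the number $s$ of cut-reducing curves and to obtain the complementary-pants picture (two once-punctured annuli plus two thrice-punctured discs) that drives your final paragraph. This exclusion is false: the paper's Lemma~\ref{lem_combinatorics} explicitly lists configurations (Cases 1 and 2) in which $\Delta_i=\{\{1,2\},\{3\text{--}8\}\}$, and the remark immediately after observes that the Meier--Zupan trisections of spun $2$-bridge knots realize Case 2, so the $2$--$6$ split occurs for precisely the trisections this paper cares most about. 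Your argument for the exclusion is also not sound on its own terms: you assert that some shadow arc for $(B_{jk},T_{jk})$ either meets $\mu$ in exactly one endpoint with disjoint interior (which is not automatic and, in similar situations in this paper, requires invoking Lee's theorem, as in the proof of Lemma~\ref{lemma_red_intersect}), or else has both endpoints on $\mu$, in which case you claim ``the curve enclosing the two punctures of $L_0$'' bounds compressing discs in all three tangles — but $\partial\eta(\mu)$ and $\partial\eta(\nu)$ need not be isotopic, so no reducing curve is produced. Once the $2$--$6$ case is admitted, your bound $s\leq 2$ (``at most one cut-reducing curve per component'') and the subsequent pants-piece description both break down, since a cut-reducing sphere for a $2$--$6$ unlink must meet the $3$-bridge component, and $\gamma_1$ need not separate $\gamma_2$ from $\gamma_3$.

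The paper sidesteps this entirely with a split-independent count: three pairwise disjoint, pairwise non-isotopic curves in an $8$-punctured sphere cannot each bound a thrice-punctured disc, because nesting would force two of them to be parallel (the inner's inside would have to be the outer's three punctures, or else exceed four), while non-nesting would require $3\times 3=9>8$ disjointly enclosed punctures. That gives $s\leq 2$ directly, and $r\leq 1$ exactly as you argue; the $f$'s are then forced to be even because a pants decomposition of an $8$-punctured sphere has at most one odd/even parity budget, and the identification $f_m\mapsto f'_m$ follows because the A-moves of a geodesic that fixes $\gamma_1,\gamma_2,\gamma_3$ are confined to the complementary pieces, at least one of which (inside $\gamma_2$ or $\gamma_3$) contains exactly one $f$. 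Replacing your $2$--$6$ exclusion with this count repairs the proposal; as written, that step is a flaw the rest of the argument depends on.
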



\begin{proof}
Recall that $\Sigma$ has 8 punctures, so each pants decomposition has 5 curves. Let $(p_{ij}^{i},p_{ik}^{i})$ be an efficient defining pair. By Lemma \ref{lem_efficient_pairs}, the distance from $p_{ij}^i$ to $p_{ik}^i$ is equal to $\b(\mc{T}) - c_i = 2$. Thus, at least 3 curves are unmoved by any geodesic in the pants complex joining $p_{ij}^i$ to $p_{ik}^i$. Let $\gamma_1, \gamma_2, \gamma_3$ be three such curves, and let $f_1, f_2$ be the other two. Curves in $\Sigma$ bounding cut discs in one of the tangles in the spine, enclose an odd number of punctures in $\Sigma$, while those bounding compressing discs enclose an even number of punctures. Thus, each of $\gamma_1, \gamma_2, \gamma_3$ is either a reducing curve or a cut-reducing curve for $L_i$. 

It is impossible for $\gamma_1$, $\gamma_2$ and $\gamma_3$ to all bound cut disks to both sides, because there are only 8 punctures and the three curves are pairwise nonparallel. Thus, at least one is a reducing curve. Without loss of generality, we may assume it is $\gamma_1$. Since $c_i = 2$, all reducing curves for $L_i$ enclose the same punctures. Thus, $\gamma_2$ and $\gamma_3$ must be cut-reducing curves. Each encloses exactly 3 punctures. Since $p_{ij}^i$ is a pants decomposition, all other curves of $p_{ij}^i$ enclose an even number of punctures. Consequently, both $f_1$ and $f_2$ must be moved by every geodesic between $p_{ij}^i$ and $p_{ik}^i$. Thus, each geodesic moves the pair $f_1, f_2$ to the pair $f'_1, f'_2$, which are the curves of $p_{ik}^i$ that are not $\gamma_1$, $\gamma_2$, or $\gamma_3$. 

Furthermore, one of $\gamma_2$ or $\gamma_3$ encloses three punctures as well as either $f_1$ or $f_2$. Since no geodesic between $p_{ij}^i$ and $p_{ik}^i$ moves $\gamma_2$ or $\gamma_3$, there are not two geodesics one of which moves $f_1$ to $f'_1$ and other of which moves it to $f'_2$. Thus, we may assume the notation was chosen so that every such geodesic moves $f_1$ to $f'_1$ and $f_2$ to $f'_2$.
\end{proof}

\begin{remark}
We will often consider efficient defining pairs $(p_{ij}^{i},p_{ik}^{i})$ and $(p_{ij}^{j},p_{jk}^{j})$. In which case, we choose notation $p_{ij}^i = \{\gamma_1, \gamma_2, \gamma_3, f_1, f_2\}$ and $p_{ij}^j
 = \{\psi_1, \psi_2, \psi_3, h_1, h_2\}$ as in Lemma \ref{lemma_pants_structure}. We refer to any of $\gamma_1, \gamma_2, \gamma_3$ as a \defn{$\gamma_n$-loop} and any of $\psi_1, \psi_2, \psi_3$ as a \textbf{$\psi_n$-loop}.
\end{remark}

A \textbf{configuration} of either $T_{ij}$, $T_{ik}$ or $L_i$ is the partition $\Delta_{ij}$, $\Delta_{jk}$ or $\Delta_i$ (respectively) of the set of the labeled punctures $L= \{1,2,3,4,5,6,7,8\}$ on $\Sigma$ built as follows: two punctures are related if they belong to the same connected component of $T_{ij}$, $T_{jk}$, or $L_i$ respectively.  We will often abbreviate the string $`3, 4, 5, 6, 7, 8'$ as $3-8$, and so forth. An element of a configuration with exactly $n$ elements is called an \defn{$n$-cycle}.

We are interested in the triplet of configurations $(\Delta_1, \Delta_2, \Delta_3)$ for $L_1$, $L_2$, and $L_3$. Up to relabeling, $(4,2)$-bridge trisection has essentially three options for such triplets. This is formalized in Lemma \ref{lem_combinatorics}. 

\begin{lemma}\label{lem_combinatorics}
Let $S$ be a connected surface in $S^4$ with a $(4,2)$-bridge trisection $T$. Up to permutation of $L$ and choice $\{i,j,k\}=\{1,2,3\}$, there are three possible configurations for $L_i$, $L_j$, and $L_k$: 
\begin{enumerate}
\item  $\Delta_{i}=\{\{1,2\},\{3 - 8\}\}$, 
$\Delta_{j}=\{\{1-5,8\},\{6,7\}\}$,
$\Delta_{k}=\{\{3,4\},\{1,2,5-8\}\}$.

\item $\Delta_{i}=\{\{1,2\},\{3-8\}\}$, 
$\Delta_{j}=\{\{1,2,6,7\},\{3,4,5,8\}\}$,
$\Delta_{k}=\{\{3,4\},\{1,2,5-8\}\}$.

\item $\Delta_{i}=\{\{1 - 4\},\{5 - 8\}\}$, 
$\Delta_{j}=\{\{1,4,5,8\},\{2,3,6,7\}\}$,
$\Delta_{k}=\{\{1,2,7,8\},\{3 - 6\}\}$. 
\end{enumerate}
\end{lemma}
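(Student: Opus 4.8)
The plan is to carry out a case analysis on how the three bridge spheres intersect the 2-component unlinks $L_1, L_2, L_3$, using the combinatorial constraints that each $L_i$ is a 2-component unlink in 4-bridge position together with the compatibility conditions forced by the shared trisection surface $\Sigma$. The key organizing principle is that for each $i$, the configuration $\Delta_i$ is a partition of the 8 punctures into the two arcs-and-caps of the two components of $L_i$ on $\Sigma$; since each component of $L_i$ meets $\Sigma$ in either $2$ or $4$ points (total $8$), the possible "shapes" of a single $\Delta_i$ are exactly $\{2,6\}$ (one component at bridge number $1$, the other at bridge number $3$) or $\{4,4\}$ (both components at bridge number $2$).

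First I would record the per-tangle structure: each $T_{ij}$ is a trivial $2$-string (since $c_i = 2$, the disk system $S \cap W_i$ has two disks, contributing $2$ arcs to each of the two tangles bounding $W_i$... more precisely, I would use the patch-number/Euler-characteristic bookkeeping already set up in the excerpt to see that each $L_i$ has exactly two components). Then I would observe that the pair $(\Delta_{ij}, \Delta_{jk})$ determines $\Delta_j$ by "merging" the tangle partitions along shared arcs — two punctures lie in the same block of $\Delta_j$ iff they are connected through an alternating sequence of $T_{ij}$- and $T_{jk}$-arcs — and the condition that $L_j$ is a 2-component unlink forces this merge to yield exactly two blocks. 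This gives a system of constraints: three partitions $\Delta_1, \Delta_2, \Delta_3$, each with two blocks of sizes $\{2,6\}$ or $\{4,4\}$, glued consistently around the "triangle" of tangles. The combinatorial heart is to enumerate, up to the symmetry group acting (relabeling the $8$ punctures and cyclically/reflectively permuting $\{1,2,3\}$), all such consistent triples, and show there are exactly three orbits, with the representatives listed.

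The main obstacle I expect is controlling the symmetry reduction so that the enumeration is genuinely complete and non-redundant: naively there are many partitions, and one must be careful that "up to permutation of $L$ and choice of $\{i,j,k\}$" really collapses the list to three. I would handle this by first normalizing $\Delta_1$: either $\Delta_1 = \{\{1,2\},\{3\text{-}8\}\}$ (the $\{2,6\}$ case) or $\Delta_1 = \{\{1\text{-}4\},\{5\text{-}8\}\}$ (the $\{4,4\}$ case). In the first case, I would then split on the shape of $\Delta_2$ (which gives the split between items (1) and (2) of the lemma), using the fact that the $2$-block $\{1,2\}$ of $\Delta_1$ is a single arc of, say, $T_{12}$, and tracking where punctures $1,2$ and $3,4$ can go; the constraint that $\Delta_3$ also have the right shape and that everything be a genuine unlink (not a nontrivial link) pins down the remaining freedom, and any residual relabeling freedom is absorbed to reach the stated normal form. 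In the $\{4,4\}$ case for $\Delta_1$ I would argue similarly that the merges force $\Delta_2$ and $\Delta_3$ into the $\{4,4\}$ shape as well, landing on item (3). Throughout, I would lean on Lemma \ref{lem_1}, Lemma \ref{std unlink surface}, and the "parity" observation (cut disks enclose odd, compressing disks enclose even) from the proof of Lemma \ref{lemma_pants_structure} to rule out partitions that cannot arise from an actual $4$-bridge unlink, rather than attempting to realize each candidate geometrically.

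One point I would be careful to make explicit: connectivity of $S$ is used to exclude degenerate triples where the three configurations "don't talk to each other" — e.g. a triple in which the same $2$-block appears in all three $\Delta_i$, which would correspond to a reducible trisection splitting off a standard piece; this is presumably why the hypothesis that $S$ is connected appears in the statement, and I would invoke Lemma \ref{reduciblenonmin} / the reducibility discussion to discard such cases. With those reductions in hand, the remaining enumeration is a finite, if tedious, check, and I would present it as a short table of cases rather than prose.
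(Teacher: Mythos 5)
Your overall plan — reduce to the tangle-level partitions $\Delta_{ij}$, normalize by relabeling, and enumerate compatible triples — is the same strategy the paper uses, so the route is right. But three steps of the proposal as written are wrong or would fail.

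First, the claim that ``the merges force $\Delta_2$ and $\Delta_3$ into the $\{4,4\}$ shape as well'' once $\Delta_1$ is $\{4,4\}$ is simply false. Configuration~2 of the lemma has $\Delta_j = \{\{1,2,6,7\},\{3,4,5,8\}\}$ of shape $\{4,4\}$ while $\Delta_i,\Delta_k$ both have shape $\{2,6\}$; relabel so the $\{4,4\}$ one is $\Delta_1$ and you have a counterexample. The paper sidesteps this by permuting $\{i,j,k\}$ to put a $\{2,6\}$-shaped $\Delta_j$ first whenever possible, so only the \emph{uniform} $\{4,4\}$ triple needs separate treatment (and that uniform triple is then excluded by connectivity, not forced to be Configuration~3). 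You would need to make the same labeling reduction explicit; as stated your case split does not cover all orbits.

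Second, invoking Lemma~\ref{reduciblenonmin} is not available: that lemma applies only to bridge trisections of \emph{minimal} bridge number, which is not a hypothesis of the present lemma. The paper uses connectivity of $S$ directly, and in two distinct ways your heuristic misses: (i) if a $2$-cycle such as $\{1,2\}$ lies in both $\Delta_{ij}$ and $\Delta_{jk}$, then $\{1,2\}\in\Delta_{ik}$ would give a circle bounding compressing discs in all three tangles and split off a sphere component; (ii) the uniform triple $\Delta_i=\Delta_j=\Delta_k=\{\{1\text{-}4\},\{5\text{-}8\}\}$ in Subcase~2a is excluded on the same grounds even though no $2$-block is repeated. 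Finally, a small conceptual slip that would snowball in the bookkeeping: a $2$-cycle $\{1,2\}$ of $\Delta_1$ is a circle component of $L_1 = T_{12}\cup\bar{T}_{13}$, hence is a $2$-cycle of \emph{both} $\Delta_{12}$ and $\Delta_{13}$, not ``a single arc of $T_{12}$.'' That propagation of a $2$-cycle to both incident tangle partitions is exactly what makes the paper's enumeration tractable, and is also what you need in order to see why the paper must separately rule out the subcase in which $\Delta_{ik}$ shares no $2$-cycle with $\Delta_{ij}$ or $\Delta_{jk}$ — a case your outline does not address.
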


\begin{proof}[Proof of Lemma \ref{lem_combinatorics}]
The fact that $T$ is a $(4,2)$-bridge trisection implies that $\Delta_1$, $\Delta_2$, and $\Delta_3$ each have either one 2-cycle and one 6-cycle or exactly two 4-cycles. 

\textbf{Case 1}: Suppose first that $\Delta_{j}$ has one 2-cycle. After relabeling, we can assume that $\Delta_{ij}=\{\{1,2\},\{3,4\},\{5,6\},\{7,8\}\}$ and $\Delta_{jk}=\{\{1,2\},\{3,8\},\{4,5\},\{6,7\}\}$. By connectivity of $F$ we have that $\{1,2\}\not \in \Delta_{ik}$. We have two cases: either $\Delta_{ik}$ shares a common 2-cycle with $\Delta_{ij}$ (or $\Delta_{jk}$) or not. 

\textbf{Subcase 1a}: $\Delta_{ij}$ and $\Delta_{ik}$ have a common 2-cycle, say $\{3,4\}\in \Delta_{ij} \cap \Delta_{ik}$. 

Suppose $\{6,7\}\in \Delta_{ik}$. Since $|\Delta_{k}| = 2$, the labels 5 and 8 must lie in the same component of $\Delta_{ik}$ as 1 and 2. This yields option 1 of the statement. Suppose now that $\{6,7\}\not \in \Delta_{ik}$, in particular $\Delta_{ik}$ and $\Delta_{jk}$ have no common 2-cycle. Focusing in $\Delta_{k}$, observe that if $\{5,8\}\not \in \Delta_{ik}$, then $\Delta_{ik}$ must contain one of $\{1,2\}$ or $\{6,7\}$, which is a contradiction to the previous sentence. Thus we have $\{5,8\}\in \Delta_{ik}$, concluding that $\Delta_{ik}$ must relate the labels 1 and 2 to 6 and 7 somehow. This yields the configuration in option 2 of the statement.  

\textbf{Subcase 1b}: $\Delta_{ik}$ has no common 2-cycle with either $\Delta_{ij}$ and $\Delta_{jk}$. 

We will see that this case cannot occur. Here, $\Delta_{ik}$ is forced to relate 1 and 2 to labels in $\{3-8\}$. After relabeling, we can assume that $\{2,3\}\in \Delta_{ik}$. We have five remaining options for $x$ such that $\{1,x\}\in \Delta_{ik}$. If $x = 4$, in order to  have $|\Delta_k| = 2$, it must be that contains $\{7,8\}\in \Delta_{jk}$. Thus $\Delta_{jk}$ and $\Delta_{ik}$ have a common 2-cycle, a contradiction. Similarly, we rule out $x = 5,6,7$. If $x = 8$, then as $\Delta_{ik}$ does not share a 2-cycle with $\Delta_{jk}$, it must be the case that $\Delta_{ik}$ contains either $\{4,6\}$ or  $\{4,7\}$. The first possibility implies $\Delta_i$ is a single 8-cycle, while the second implies $\\Delta_{ik}$ and $\Delta_{ij}$ share a 2-cycle. Both are impossibilities in this subcase.

\textbf{Case 2:} Suppose now that $\Delta_{j}$ contains two 4-cycles. 

Without loss of generality, we can assume that $\Delta_{ij}=\{\{1,2\},\{3,4\},\{5,6\},\{7,8\}\}$ and $\Delta_{jk}=\{\{1,4\},\{2,3\},\{5,8\},\{6,7\}\}$. Observe that if $\Delta_{i}$ or $\Delta_{j}$ have one 2-cycle, then we can permute the symbols $\{i,j,k\}$ and continue as in Case 1; yielding the configurations 1 and 2 in the statement. In particular, if $\{x,y\}\in \Delta_{ik}$, then we must have $\{a,b\},\{c,d\}\in \Delta_{ik}$ where $\{x,a\}, \{y,b\}\in \Delta_{ij}$ and $\{x,c\}, \{y,d\}\in \Delta_{jk}$. 

\textbf{Subcase 2a:} $\Delta_{ik}$ relates 1 and 2 to 3 and 4. 

By the previous paragraph, we are forced to have $\Delta_{ik} = \{ \{1,3\}, \{2,4\}, \{5,7\}, \{6,8\}\}$. Thus \[\Delta_{j} = \Delta_{k}= \Delta_{i}=\{\{1-4\}, \{5 - 8\}\}\] which contradicts the fact that $F$ is connected. 

\textbf{Subcase 2b:} $\Delta_{ij}$ does not relate 1 and 2 to 3 and 4. 

After relabeling, we can assume that $\{4,5\}\in \Delta_{ik}$. The fact that $|\Delta_{k}| = |\Delta_{i}| = 2$ forces $\Delta_{ik}=\{ \{4,5\}, \{3,6\}, \{2,7\}, \{1,8\}\}$. This yields configuration 3 in the statement. 
\end{proof}

It is easy to see that (MZ)-bridge trisections for (twist) spun 2-bridge knots have configurations as in Case 2 of Lemma \ref{lem_combinatorics}. 

\begin{question}
Are there nonstabilized $(4,2)$-bridge trisections of the other types? 
\end{question}

\begin{remark}\label{remark_combinatorial_property}
The following combinatorial properties of reducing curves are direct consequences of Lemma \ref{lem_combinatorics}: Let $\psi_1$ and $\gamma_1$ be reducing curves in $\Delta_{j}$ and $\Delta_{i}$, respectively.
\begin{itemize}
    \item If $\{x,y\}$ are punctures enclosed by $\gamma_1$ and if one of them is also enclosed by $\psi_1$, then both are enclosed by $\psi_1$.
    \item Suppose $\psi_1$ and $\gamma_1$ both bound four punctures, and that $\gamma_1$ bounds $\{x,y,z,w\}$. Then, after relabeling, $\psi_1$ separates $\{x,y\}$ from $\{z,w\}$.
\end{itemize}
\end{remark}

\subsection{Reducing curves}
Reducing curves play a special role in trisections. In the case of (4,2)-bridge trisections, they restrict the pants decompositions near $p^i_{ij}$ in $\Pcal(\Sigma)$. Lemmas \ref{lemma_red_intersect} and \ref{lem_experimental_2} show that in certain circumstances reducing curves for different links must intersect at least four times. Lemma \ref{lemma_psi_f_distinct} compares the $\gamma_n$-curves in $p^i_{ij}$ with the ones (called $\psi_n$-curves, for convenience) in $p^j_{ij}$. Lemmas \ref{lem_gamma1_first} and \ref{lem_gamma1_last} imply that A-moves of the form $\gamma_1\mapsto \psi_n$ and $\gamma_n \mapsto \psi_1$ cannot occur near $p^i_{ij}$. We rely heavily on theorems of Lee \cite{lee2017reduction}, governing the relationship between perturbations of a bridge position with bridge disks.

\begin{lemma}\label{lemma_red_intersect}
Suppose $L_i$ has one component intersecting $\Sigma$ exactly twice and $L_j$ has no such component. Let $\gamma$ in $\Sigma$ be a reducing curve for $L_i$ and suppose $\psi \subset \Sigma$ is either a reducing curve or cut-reducing curve for $L_j$. Then the following hold:
\begin{enumerate}
    \item If $\psi$ is a reducing curve, then $|\gamma \cap \psi| \geq 4$.
    \item If $\psi$ is a cut-reducing curve, and $\psi$ and $\gamma$ are disjoint, then $\gamma$ lies inside a 3-punctured disk bounded by $\psi$. 
\end{enumerate}
\end{lemma}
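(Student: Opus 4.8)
The plan is to use the combinatorial classification of Lemma~\ref{lem_combinatorics} to pin down the configuration, and then to play the compressing and cut discs of $\gamma$ against those of $\psi$, exploiting that both $\gamma$ and $\psi$ bound c-discs in the common tangle $T_{ij}$ and that the unknot components of the relevant unlinks sit in non-minimal, hence perturbed, bridge position, so that Lee's theorems \cite{lee2017reduction} apply. For the bookkeeping: since $L_i$ has a component meeting $\Sigma$ twice while $L_j$ has none, a pass through the trichotomy of Lemma~\ref{lem_combinatorics} forces the second configuration (in the first configuration each of $L_i,L_j,L_k$ has a $2$-punctured component, and in the third none does). Choosing notation accordingly, $\gamma$ encloses the pair $\{x,y\}$ which is a strand of both $\Delta_{ij}$ and $\Delta_{ik}$ and forms the small component of $L_i$; a reducing curve $\psi$ for $L_j$ encloses a $4$-element union of two strands of $\Delta_{ij}$ (equivalently of $\Delta_{jk}$), while a cut-reducing curve $\psi$ for $L_j$ encloses a $3$-element set that is a full strand of $\Delta_{ij}$ together with one further puncture, and has the same shape relative to $\Delta_{jk}$. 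In all cases $\gamma$ bounds a compressing disc in $T_{ij}$ and in $\T_{ik}$, $\psi$ bounds a c-disc in $T_{ij}$ and in $\T_{jk}$, and, being simple closed curves on the sphere $\Sigma$, they meet in an even number of points.

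For part (1), suppose toward a contradiction that $|\gamma\cap\psi|<4$, so that $|\gamma\cap\psi|\in\{0,2\}$. The key is that on the bridge sphere $\Sigma$ of $L_k=T_{jk}\cup\T_{ik}$ the curve $\gamma$ bounds a compressing disc on the $B_{ik}$ side and $\psi$ bounds a compressing disc on the $B_{jk}$ side, while both also bound c-discs in $T_{ij}$. The six-punctured component of $L_k$ is an unknot in a non-minimal bridge position, hence perturbed, and carries a perturbing pair by Lemma~\ref{std unlink surface}(3); by \cite{lee2017reduction} these bridge discs can be isotoped to meet the compressing discs of $\gamma$ and $\psi$ efficiently. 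I would then run an innermost-disc and outermost-arc exchange among the compressing disc of $\gamma$, the c-disc of $\psi$, and the perturbing discs (first resolving the at most two points of $\gamma\cap\psi$, then tidying the resulting circles and arcs) to produce either an essential curve bounding c-discs in all three tangles of the spine or a collection of shadow arcs as in Lemma~\ref{criterion_destab}. The first outcome makes the trisection reducible or stabilized (Lemma~\ref{lem_1}) and the second makes it stabilized, either of which contradicts the minimality of $\b(\mathcal{T})=4$ (Lemmas~\ref{reduciblenonmin} and \ref{stabnonmin}). Hence $|\gamma\cap\psi|\geq 4$.

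For part (2), $\psi$ is cut-reducing for $L_j$ and $\gamma\cap\psi=\emptyset$; let $P$ be the set of three punctures enclosed by $\psi$. Since $\gamma$ and $\psi$ are disjoint on the sphere, $\gamma$ lies in one of the two discs bounded by $\psi$, and because $\gamma$ encloses exactly two punctures it lies in the $3$-punctured disc precisely when $\{x,y\}\subset P$; so it suffices to rule out $\{x,y\}\cap P=\emptyset$. If $\{x,y\}\cap P=\emptyset$, the two descriptions of $P$ coming from the cut discs in $T_{ij}$ and in $\T_{jk}$ (a full $\Delta_{ij}$-strand plus a puncture, and a full $\Delta_{jk}$-strand plus a puncture, all contained in $L\setminus\{x,y\}$) leave only a short list of possibilities for $P$. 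For each of these I would combine the cut-reducing sphere of $\psi$ with the annulus between $\gamma$ and $\psi$ and with the perturbing data supplied by \cite{lee2017reduction} to build a reducing or cut-reducing curve for one of the three links, or a configuration as in Lemma~\ref{criterion_destab}, once more contradicting minimality (or the combinatorics of Lemma~\ref{lem_combinatorics}). Therefore $\{x,y\}\subset P$ and $\gamma$ lies inside the $3$-punctured disc bounded by $\psi$.

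The main obstacle is the disc-exchange step in part (1): ensuring that the innermost-disc and outermost-arc surgeries terminate without creating inessential curves, and extracting from \cite{lee2017reduction} exactly the statement that the perturbing discs of the six-punctured unknot can be pushed off the compressing discs of $\gamma$ and $\psi$, leaving behind one of the two standard obstructions to minimality. The $|\gamma\cap\psi|=2$ case (selecting the resolution that retains a compressing disc) and the short enumeration in part (2) are the remaining delicate points, but they become routine once the disc-exchange argument is in place.
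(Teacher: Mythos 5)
Your observation that the hypothesis forces the second configuration of Lemma~\ref{lem_combinatorics} is correct and is a nice combinatorial reformulation, but it is tangential to the actual work of the lemma, and the rest of the proposal leaves the two essential steps undone.

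For part (1), the case $|\gamma\cap\psi|=2$ does not require the disc-exchange machinery at all: because $\gamma$ and $\psi$ both bound compressing discs in $(B_{ij},T_{ij})$, the two punctures inside $\gamma$ are the endpoints of a single arc of $T_{ij}$ and hence lie on the \emph{same} side of $\psi$; it follows that the arcs of $\psi$ meeting the twice-punctured disc bounded by $\gamma$ come in pairs, so $|\gamma\cap\psi|\equiv 0\pmod 4$. This one-line parity argument immediately rules out $|\gamma\cap\psi|=2$ and reduces part (1) to the disjoint case. Your proposal misses this and instead folds both cases into an unspecified innermost/outermost exchange, which is where the real gap lies. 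You yourself label the disc-exchange step ``the main obstacle'' and do not carry it out; but that step \emph{is} the proof, so as written the argument is incomplete.

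The paper's treatment of the disjoint case is also structurally different from your plan. Rather than trading discs among the perturbing pair of $L_k$ and the c-discs of $\gamma$ and $\psi$ (and you appear to have chosen the wrong link: your $\gamma$ and $\psi$ do not both bound compressing discs on one side of $L_k$, so neither is a reducing curve for that link), the paper works with the link $L_j$ itself. It takes the sphere $Q$ with $Q\cap\Sigma=\psi$, uses the bridge disc $E_1=D\cap B_{ij}$ (which can be pushed off $Q$ precisely because $\gamma\cap\psi=\emptyset$) to build a modified unlink $K_j$ for which $\Sigma$ is still a bridge sphere, and then invokes \cite[Theorem~1.1]{lee2017reduction} to produce a dual bridge disc whose shadow crosses the shadow of $E_1$ exactly once, triggering Lemma~\ref{criterion_destab}. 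That is a concrete, terminating argument, whereas your proposed innermost/outermost surgery has no control on the number of steps or on the essentiality of the curves it produces; it is exactly the sort of sketch that needs the missing details before it can be believed. Part (2) has the same gap: you propose to ``combine the cut-reducing sphere of $\psi$ with the annulus between $\gamma$ and $\psi$ and with the perturbing data'' to get a contradiction, but no such combination is exhibited. In the paper this case is handled by the same $K_j$/Lee argument, with the $3$/$5$-puncture dichotomy determining which side of $\psi$ the tangle being reduced comes from. To make your proof actually work, the surgery in part (1) and the case analysis in part (2) both need to be made explicit, and along the way you should replace the $|\gamma\cap\psi|=2$ branch with the simpler parity argument.
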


\begin{proof}
Let $\gamma$ and $\psi$ be as in the statement and assume that they have been isotoped so as to intersect minimally.  Let $Q$ be a sphere separating the components of $L_j$ such that $Q \cap \Sigma = \psi$. Let $L_i(1)$ and $L_i(3)$ be the 1-bridge and 3-bridge components of $L_i$ and let $L'_j$ and $L''_j$ be the two components of $L_j$. 

Since $\gamma$ is a reducing curve for $L_i$, it is isotopic to the boundary of a regular neighborhood of an arc $\alpha \subset \Sigma$ joining the punctures $L_i(1) \cap \Sigma$. The arc $\alpha$ is the intersection $D \cap \Sigma$ of a disc $D$ such that $\partial D = L_i(1)$ and the interior of $D$ is disjoint from $L_i$. Observe that there is a shadow arc $\alpha'$ for $(\bar{B_{ik}}, \bar{T}_{ik})$ that is a copy of $\alpha.$

Suppose that $\gamma \cap \psi = \emptyset$. We may, therefore, assume that $D$ is disjoint from  $Q \cap B_{ij}$. 

Observe that $E_1 = D \cap B_{ij}$ is a bridge disc for an arc of $T_{ij}$. Let $K_j \subset B_{ij} \cup \bar{B_{jk}}$ be the link that results from isotoping this arc along $E_1$ and across $\Sigma$. The link $K_j$ is isotopic to $L_j$, and is, therefore, an unlink of two components. One component is equal to a component of $L_j$. The result of $\boundary$-reducing $(B_{jk}, T_{jk})$ along the c-disk $E = Q \cap B_{jk}$ is the disjoint union of two trivial tangles, call them $(U_1, \tau_1)$ and $(U_2, \tau_2)$. The result of $\boundary$-reducing $(B_{jk}, K_j \cap B_{jk})$ along $E$ is two tangles, one of which is either $(U_1, \tau_1)$ or $(U_2, \tau_2)$. Without loss of generality, we may assume it is $(U_2, \tau_2)$. Call the other one $(U'_1, \tau'_1)$. If $(U'_1, \tau'_1)$ is a trivial tangle, then so is  $(B_{jk}, K_j \cap B_{jk})$. 
If $\psi$ is a reducing-curve, then $\tau'_1$ is a single strand; it must be unknotted, as $K_j$ is an unlink. Otherwise, $\psi$ separates the punctures of $\Sigma$ into one set with 3 punctures and the other with 5 punctures. If $\gamma$ is on the side with 5 punctures, we have our theorem, so assume $\gamma$ is on the side with 3 punctures. Thus, one of $(U_1, \tau_1)$ has 2 strands, and $(U_2, \tau_2)$ has 3 strands. Thus, $(U'_1, \tau'_1)$ has a single strand and, as before, we see that it is a trivial tangle. Thus, $(B_{jk}, K_j \cap B_{jk})$ is a trivial tangle and $\Sigma$ is a bridge sphere for $K_j$.

By \cite[Theorem 1.1]{lee2017reduction}, there is a bridge disc $E_2$ for a strand of $\bar{T}_{jk}$ in $\bar{B_{jk}}$ such that the arcs $\alpha$ and $\beta = E_2 \cap \Sigma$ intersect in a single point.  The three shadow arcs $\alpha$, $\alpha'$, and $\beta$ show that $\Sigma$ is stabilized as in Lemma \ref{criterion_destab}. This contradicts our assumption on $\Sigma$. Thus, $|\gamma \cap \psi| > 0$ when $\psi$ is a reducing curve and $\gamma$ is on the side with 5 punctures if $\psi$ is a cut-reducing curve and $|\gamma \cap \psi| =\emptyset$. 

Consider the twice punctured disc $D \subset \Sigma$ bounded by $\gamma$. If $|\psi \cap \gamma| > 0$, then $\psi \cap D$ consists of parallel arcs separating the punctures. If $\psi$ is a reducing curve, then it bounds discs in $\Sigma$ each containing an even number of punctures. In which case, $|\psi \cap D|$ is even and $|\psi \cap \gamma|$ is a multiple of 4. Consequently, if $\psi$ is a reducing curve, $|\gamma \cap \psi| \geq 4$.
\end{proof}
\begin{lemma}\label{lem_experimental_2}
Suppose $L_i$ has one component intersecting $\Sigma$ exactly twice. That is, $L_i$ is a 2-component link, where one component is in 1-bridge position and the other component is in 3-bridge position. Let $\gamma \subset \Sigma$ be a reducing curve for $L_i$ and suppose $\psi \subset \Sigma$ is a cut-reducing curve for $L_j$. 
\begin{enumerate}
    \item Suppose that both components of $L_j$ are in 2-bridge position. Then $|\gamma \cap \psi|\neq 2$. 
    \item Suppose $L_j$ has one component in 3-bridge position. If $|\gamma\cap \psi|=2$, then the two punctures corresponding to the 1-bridge component of $L_j$ lie inside a 3-punctured disk bounded by $\psi$. 
\end{enumerate}
\end{lemma}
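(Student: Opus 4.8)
The plan is to adapt the proof of Lemma~\ref{lemma_red_intersect}: assuming $|\gamma\cap\psi|=2$, I would extract a destabilizing triple of shadow arcs as in Lemma~\ref{criterion_destab}, except in precisely the configuration permitted by the statement; since $\mc{T}$ is not stabilized this yields a contradiction, proving (1) outright and forcing the puncture position in (2). First isotope $\gamma$ and $\psi$ to intersect minimally and suppose $|\gamma\cap\psi|=2$. Let $D\subset\Sigma$ be the twice-punctured disc bounded by $\gamma$; its two punctures are the endpoints of the unique bridge arc $t$ of $L_i(1)$ in $(B_{ij},T_{ij})$. An innermost-disc argument shows $\psi\cap D$ is a single arc separating the two punctures of $D$, so $\psi$ separates the two punctures of $L_i(1)$ and, writing $\gamma=\partial N(\alpha)$ with $\alpha=D_0\cap\Sigma$ as in Lemma~\ref{lemma_red_intersect} (where $D_0$ is a disc bounded by $L_i(1)$ with interior disjoint from $L_i$, and $E_1:=D_0\cap B_{ij}$ is a bridge disc for $t$), we may take $\alpha$ to meet $\psi$ in exactly one point; there is also a parallel copy $\alpha'$ of $\alpha$ that is a shadow arc for $(\overline{B}_{ik},\overline{T}_{ik})$. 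Finally let $Q=E_{ij}\cup E_{jk}$ be the cut sphere realizing $\psi$ as a cut-reducing curve for $L_j$, with $E_{ij}\subset B_{ij}$, $E_{jk}\subset\overline{B}_{jk}$; note $\psi$ splits $\Sigma$ into a $3$-punctured and a $5$-punctured disc.

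After removing closed curves of $E_1\cap E_{ij}$ by innermost-disc moves, a parity count of the boundary points of $E_1\cap E_{ij}$ forces the cut point $E_{ij}\cap T_{ij}$ to lie on $t$ — hence on the component $C$ of $L_j$ containing $t$ — and forces $E_1\cap E_{ij}$ to be a single arc from $\alpha\cap\psi$ to that cut point. Resolving this arc by a bridge-disc isotopy reduces us to the disjointness that was automatic in Lemma~\ref{lemma_red_intersect}. Then, exactly as there, slide the relevant strand of $T_{ij}$ across $\Sigma$ along its bridge disc to obtain a link $K_j$ isotopic to $L_j$, and $\partial$-reduce $(\overline{B}_{jk},\overline{T}_{jk})$ (and its $K_j$-counterpart) along $E_{jk}$, counting punctures on the two sides of $\psi$. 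Cases (1) and (2) diverge here: if both components of $L_j$ have bridge number $2$, or if $L_j$ has a $3$-bridge component but the two punctures of its $1$-bridge component $L_j(1)$ do not both lie in the $3$-punctured disc bounded by $\psi$, then the $\partial$-reduction produces a trivial tangle on one side, so $\Sigma$ is a bridge sphere for $K_j$; Lee's theorem \cite[Theorem~1.1]{lee2017reduction} then yields a bridge disc $E_2$ for a strand of $\overline{T}_{jk}$ whose shadow $\beta$ shares exactly one endpoint with $\alpha$ and has interior disjoint from it, so $\alpha,\alpha',\beta$ exhibit a stabilization of $\mc{T}$ by Lemma~\ref{criterion_destab}, a contradiction. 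This gives $|\gamma\cap\psi|\neq 2$ in case (1), and in case (2) the stated position of the punctures of $L_j(1)$.

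The main obstacle, and the genuinely new point relative to Lemma~\ref{lemma_red_intersect}, is that $D_0$ cannot be made disjoint from $Q$ (since $\alpha$ must cross $\psi$), so one must locate and eliminate the forced arc of $E_1\cap E_{ij}$. This is delicate because that arc ends on the link, at the cut point on $t$, so resolving it is a bridge-disc surgery that alters $t$ and possibly $\psi$; one must verify that it still leaves a genuine bridge disc to slide, preserves enough of the cut-disc structure of $E_{jk}$ to run the $\partial$-reduction, and is compatible with Lee's theorem. The remaining work is bookkeeping: a careful enumeration of the positions of the cut point and of the components of $L_j$ relative to $\psi$, checking that the only configurations avoiding a stabilization are exactly those appearing in the statement.
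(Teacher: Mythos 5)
Your proposal correctly identifies the main new obstacle relative to Lemma~\ref{lemma_red_intersect} --- when $|\gamma\cap\psi|=2$, the disc $D_0$ cannot be pushed off the cut sphere $Q$, and parity forces a single arc of $E_1\cap Q_0$ running from the point $\alpha\cap\psi$ to the puncture $Q_0\cap T_{ij}$, which must lie on $t$. That much matches the paper. But the way you propose to get past it is where the gap lies. You want to ``resolve this arc by a bridge-disc isotopy'' so as to recover the disjointness hypothesis and then rerun the Lemma~\ref{lemma_red_intersect} argument (slide along $E_1$ to form $K_j$, then $\partial$-reduce along $E_{jk}$). This surgery is not carried out, and you yourself flag that it ``alters $t$ and possibly $\psi$'' and that one would still have to check it preserves the cut-disc structure and is compatible with Lee's theorem. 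Indeed, pushing $E_1$ past the cut point changes $|Q\cap L_j|$ and therefore whether $Q$ is even a cut-reducing sphere afterward, and if it instead alters the shadow $\alpha$, the parallel copy $\alpha'$ in $(\overline{B}_{ik},\overline{T}_{ik})$ --- which is essential to the stabilization triple --- changes with it. Nothing in the proposal shows the dust settles.

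The paper's proof avoids this problem entirely by not resolving the intersection at all. Instead it cuts $(S^3,L_j,\Sigma)$ open along the cut-reducing sphere $Q$ and caps with (ball, unknotted arc) pairs, expressing $(S^3,L_j,\Sigma)$ as a connected sum of two smaller bridge-split unlinks $(S^3,\lambda_1,\Sigma_1)$ and $(S^3,\lambda_2,\Sigma_2)$. Because $E\cap Q_0$ is a single arc from $\alpha\cap\psi$ to the cut point on $t$, the disc $E$ simply splits along it into bridge discs $E_1,E_2$ for $\lambda_1,\lambda_2$ respectively --- no isotopy or surgery is needed. The authors then work inside the simpler piece $\lambda_1$: in case~(1) and the ``4-puncture'' subcase of~(2), $\lambda_1$ is a $2$-bridge unknot, so Lee's Theorem~1.2 (not Theorem~1.1, as in your citation) supplies a bridge disc $\epsilon$ on the opposite side of $\Sigma_1$ meeting $E_1$ in one endpoint; taking $\epsilon$ for the strand not containing the glued-in arc, $\epsilon$ lifts to a bridge disc for $L_j$ in $(\overline{B}_{jk},\overline{T}_{jk})$ whose shadow meets $\alpha$ in a single endpoint, producing the stabilization with $\alpha,\alpha'$. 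The only configuration escaping contradiction is the ``6-puncture'' subcase of~(2), which gives exactly the asserted position of the $1$-bridge component of $L_j$. So the dichotomy you set up is right, but the mechanism you propose for reaching Lee's theorem is the wrong one; to fill the gap you should replace the ``resolve then slide'' step by the connected-sum decomposition along $Q$, which makes the use of Lee's theorem clean and local to $\lambda_1$.
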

\begin{figure}[h]
\centering
\labellist \small\hair 2pt 
\pinlabel {(a)} at 0 120
\pinlabel {$\alpha$} at 95 120
\pinlabel {$Q$} at 180 120

\pinlabel {(b)} at 250 120
\pinlabel {$\alpha$} at 340 110
\pinlabel {$Q$} at 410 120

\pinlabel {(c)} at 480 120
\pinlabel {$\alpha$} at 585 105
\pinlabel {$Q$} at 540 125

\endlabellist
\includegraphics[width=1\textwidth]{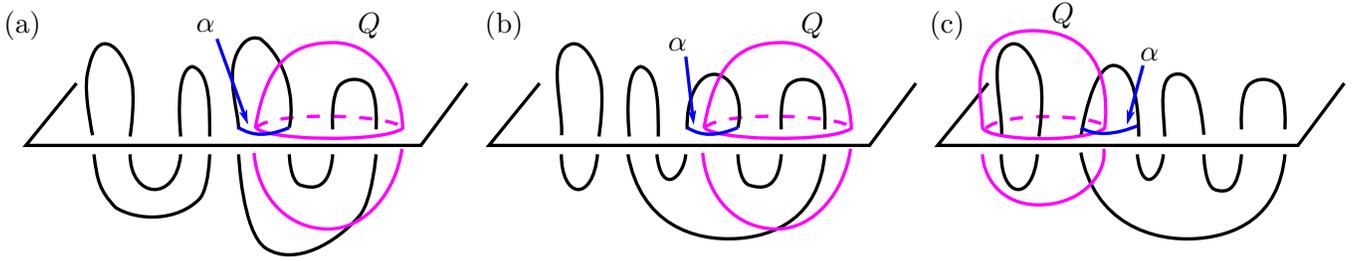}
\caption{The link $L_j=T_{ij}\cup \T_{jk}$ in bridge position. The arc $\alpha$ is a shadow for arcs in $T_{ij}$ and $T_{ik}$.} 
\label{fig_experiment}
\end{figure}
\begin{proof}

 Suppose for the sake of contradiction that $|\gamma \cap \psi| = 2$. Let $Q$ be a cut-reducing sphere such that $Q \cap \Sigma = \psi$. Cut open $(S^3, L_j)$ along $Q$ and glue in (3-ball, unknotted arc) pairs $(B^3, \alpha_1)$ and $(B^3, \alpha_2)$ to obtain $(S^3, \lambda_1)$ and $(S^3, \lambda_2)$. In the 3-balls that we glued in we may find once punctured disks whose boundaries coincide with the images of $\psi$. 
Attach those discs to the remnants of $\Sigma$ to obtain bridge spheres $\Sigma_1$ and $\Sigma_2$ for $(S^3, \lambda_1)$ and $(S^3, \lambda_2)$, respectively. We can recover $(S^3, L_j, \Sigma)$ by taking the connected sum of the triples $(S^3, \lambda_1, \Sigma_1)$ and $(S^3, \lambda_2, \Sigma_2)$. In particular, $\lambda_1$ and $\lambda_2$ are unlinks. Since we are decomposing a 2-component unlink $L_j$ via a cut-reducing sphere, we can assume that $\lambda_1$ has one component and $\lambda_2$ has two components. There are a few cases to consider (see Figure \ref{fig_experiment}). 
In all of these cases, the strategy is the following. Using the same notations as in Lemma \ref{lemma_red_intersect}, there is a shadow arc $\alpha'$ for $(\bar{B_{ik}}, \bar{T}_{ik})$ that is a copy of $\alpha$ for $(B_{ij}, T_{ij})$. We then use a result of Lee's \cite{lee2017reduction} to find a shadow in $(B_{jk},T_{jk})$ intersecting $\alpha$ only in one endpoint (and no interior points). By Lemma \ref{criterion_destab}, this implies that $\mc{T}$ is stabilized, contrary to hypothesis.

Let $D$ as in Lemma \ref{lemma_red_intersect}. The intersection $D \cap \Sigma$ is a shadow $\alpha$ for arcs in both $T_{ij}$ and $T_{ik}$. Since $|\gamma \cap \psi|=2$, the disk $Q_0 = Q \cap B_{ij}$ intersects the disc $E= D \cap B_{ij}$ in a single arc. 
Thus, $E$ persists to bridge discs $E_1$ for $\lambda_1$ and $E_2$ for $\lambda_2$.

\textbf{Case 1:} Each component of $L_j$ is in 2-bridge position, i.e. intersects $\Sigma$ four times.

Only one component of $L_j$ intersects $Q$. Without loss of generality, we may assume it is $L_j'$. Furthermore, all of the punctures $L''_j \cap \Sigma$ must lie in $\Sigma_2$ as $|L'_j \cap \Sigma| = 4$. Thus, $\lambda_1$ is an unknot intersecting $\Sigma_1$ exactly 4 times. 
Recall $E_1$ is a bridge disk for $\lambda_1$. Let $E'_1$ be another bridge disc for $\lambda_1$, on the same side of $\Sigma_1$ as $E_1$, but disjoint from $E_1$. Observe that in the four punctured sphere $\Sigma_1$, the frontiers of the arcs $E_1 \cap \Sigma_1$ and $E'_1 \cap \Sigma_1$ are isotopic. Since a reduction along a bridge disk of the 2-bridge unknot is an unknot in 1-bridge position, a result of Lee \cite[Theorem 1.2]{lee2017reduction} tells us that each arc of $\lambda_1 \setminus \Sigma_1$ on the opposite side of $\Sigma_1$ from $E_1$ and $E'_1$ has a bridge disc intersecting both $E_1$ and $E_2$ only in one endpoint (and no interior points). Let $\epsilon$ be such a disc for the strand of $\lambda_1 \setminus \Sigma_1$ that does not contain $\alpha_1$. Then $\epsilon$ is also a bridge disc for $L_j$ and it intersects $\alpha$ only in one endpoint (and no interior points).

\textbf{Case 2:} A component of $L_j$ is in 1-bridge position, i.e. intersects $\Sigma$ only twice.

If $\lambda_1$ is an unknot intersecting $\Sigma_1$ exactly 4 times, then we have the situation with the schematic shown in Figure \ref{fig_experiment}(b). In this case, the shadow we seek for $(B_{jk},T_{jk})$ is found as in Case 1. That is, there is a shadow arc $\alpha'$ for $(\bar{B_{ik}}, \bar{T}_{ik})$ that is a copy of a shadow arc $\alpha$ for $(B_{ij}, T_{ij})$. Since $\lambda_1$ is a 2-bridge unknot, Lee's result \cite{lee2017reduction} tells us that there is a shadow in $(\bar{B_{jk}},\bar{T_{jk}})$ intersecting $\alpha$ only in one endpoint (and no interior points). On the other hand, if $\lambda_1$ is an unknot intersecting $\Sigma_1$ exactly 6 times, we have the second conclusion of our lemma (see Figure \ref{fig_experiment}(c)). 
%
%
%
\end{proof}
Our proofs of Lemmas \ref{lemma_red_intersect} and \ref{lem_experimental_2} above do not work for higher bridge numbers, as there is a 4-bridge position of the unknot with no complete cancelling disk system (see \cite{lee2017reduction}).

For the remainder of this section, let $p_{ij}^{i}$ and $p_{ij}^{j}$ be pants decompositions belonging to defining pairs for $L_i = T_{ij}\cup\T_{ik}$ and $L_j = T_{kj}\cup \T_{ij}$, respectively. Denote their curves by $p_{ij}^{i}=\{\gamma_1,\gamma_2,\gamma_3,f_1, f_2\}$ and $p_{ij}^{j}=\{\psi_1,\psi_2,\psi_3,h_1, h_2\}$ as in Lemma \ref{lemma_pants_structure}. 

\begin{lemma}\label{lemma_psi_f_distinct}
No $\psi_n$-loop is equal to $f_m$, for any $m \in \{1,2\}$. Similarly, no $\gamma_n$-loop is equal to $h_m$ for any $m \in \{1,2\}$.
\end{lemma}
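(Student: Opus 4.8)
The plan is to argue by contradiction using the combinatorial structure established in Lemma \ref{lemma_pants_structure} together with the self-intersection bounds of Lemmas \ref{lemma_red_intersect} and \ref{lem_experimental_2}. Suppose, for a contradiction, that some $\psi_n$-loop equals some $f_m$; by symmetry (swapping the roles of $i$ and $j$) it suffices to treat this case, and the statement about $\gamma_n$-loops equalling $h_m$ follows identically. Recall from Lemma \ref{lemma_pants_structure} that $f_1, f_2$ bound compressing discs for $T_{ij}$, while each $\psi_n$ is either a reducing curve or a cut-reducing curve for $L_j = T_{kj} \cup \T_{ij}$. In particular, if $\psi_n = f_m$, then $f_m$ — which a priori only bounds a compressing disc on the $T_{ij}$ side — also bounds a c-disc for $T_{kj}$, so $f_m$ is a reducing or cut-reducing curve for $L_j$.

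First I would dispose of the parity mismatch. A curve bounding a compressing disc for $T_{ij}$ encloses an even number of punctures, whereas a cut-reducing curve for $L_j$ encloses an odd number of punctures on each side (it bounds cut discs, which meet the tangle once). Hence $\psi_n = f_m$ forces $\psi_n$ to be the reducing curve $\psi_1$, not one of the cut-reducing curves $\psi_2, \psi_3$. So the only case to rule out is $\psi_1 = f_m$ for some $m$. Now $f_m$ bounds a compressing disc in $T_{ij}$ and (being equal to the reducing curve $\psi_1$) also bounds a compressing disc in $\T_{ij}$ — wait, more carefully, $\psi_1$ bounds compressing discs on both sides of $\Sigma$ relative to $L_j$, i.e. in $T_{kj}$ and in $T_{ij}$. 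Combined with the fact that $f_m$ already bounds a compressing disc for $T_{ij}$ (consistent) this means $f_m$ is a reducing curve for $L_j$. The contradiction I expect to extract is that $f_m$, together with the $\gamma_1$-loop of $p_{ij}^i$ which is a reducing curve for $L_i$, violates the intersection estimates: $\gamma_1$ and $f_m$ are distinct curves in the single pants decomposition $p_{ij}^i$, hence disjoint; but $\gamma_1$ is a reducing curve for $L_i$ and $f_m$ would be a reducing or cut-reducing curve for $L_j$, so Lemma \ref{lemma_red_intersect} (or \ref{lem_experimental_2}) applies. Depending on which of the three configurations of Lemma \ref{lem_combinatorics} we are in — i.e. whether $L_i$ and $L_j$ each have a $1$-bridge component — either $|\gamma_1 \cap f_m| \geq 4$ (impossible for disjoint curves in a common pants decomposition) or $f_m$ must enclose a specific $3$-punctured region, which one then checks is incompatible with $f_m$ enclosing an even number of punctures and lying in $p_{ij}^i$ alongside the three curves $\gamma_1, \gamma_2, \gamma_3$ of the prescribed enclosure pattern from Remark \ref{remark_combinatorial_property}.

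The main obstacle will be the bookkeeping across the configuration cases of Lemma \ref{lem_combinatorics}: Lemmas \ref{lemma_red_intersect} and \ref{lem_experimental_2} have hypotheses about which components are in $1$-bridge versus $3$-bridge position, and in a $(4,2)$-trisection a link $L_i$ may be either a $(2,6)$-split (one $1$-bridge component) or a $(4,4)$-split (no $1$-bridge component), so I will need to run the argument in each combination of types for $(L_i, L_j)$, invoking whichever lemma applies, and in the $(4,4)$–$(4,4)$ situation fall back on the direct parity/enclosure contradiction using Remark \ref{remark_combinatorial_property} and the fact that $\gamma_1, \gamma_2, \gamma_3$ are unmoved cut-reducing or reducing curves coexisting with $f_m$ in a single pants decomposition. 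I expect the disjointness of $\gamma_1$ and $f_m$ inside $p_{ij}^i$, contradicting $|\gamma_1 \cap f_m| \geq 4$, to be the crux in the cases where a $1$-bridge component exists, and a counting argument on the at most $8$ punctures to close the remaining case.
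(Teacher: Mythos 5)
Your opening steps match the paper: the parity argument correctly disposes of $\psi_2, \psi_3$, and you correctly reduce to the case $\psi_1 = f_m$, i.e.\ $f_m$ becomes a reducing curve for $L_j$. You also correctly note that the cases where $\gamma_1$ and $\psi_1$ bound different numbers of punctures are ruled out by Lemma~\ref{lemma_red_intersect} combined with disjointness, and that the both-bound-four case is killed by a simple counting/isotopy argument.

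But there is a genuine gap, and it is exactly where you expect the argument to be easy. You write that ``the disjointness of $\gamma_1$ and $f_m$ inside $p_{ij}^i$, contradicting $|\gamma_1 \cap f_m| \geq 4$'' will be ``the crux in the cases where a $1$-bridge component exists.'' However, the only surviving case after the parity and counting reductions is the one where both $\gamma_1$ and $\psi_1=f_m$ enclose exactly two punctures each, which corresponds to configuration~1 of Lemma~\ref{lem_combinatorics}: here \emph{both} $L_i$ and $L_j$ have a $1$-bridge component. Lemma~\ref{lemma_red_intersect} explicitly hypothesizes that $L_i$ has a $1$-bridge component while $L_j$ has \emph{no} such component, so its conclusion $|\gamma \cap \psi| \geq 4$ is unavailable; indeed, in configuration~1 the two reducing curves bound disjoint pairs of punctures and genuinely are disjoint, so no intersection contradiction exists. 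This is precisely why the paper's proof cannot stop at this point and instead launches into a detailed analysis of the $A$-move $f_1 \mapsto f'_1$ inside a four-holed sphere $E$, splitting into two cases according to how many $\gamma_n$-loops appear on $\partial E$, and in each case constructing shadow arcs for the three tangles that feed into the stabilization criterion (Lemma~\ref{criterion_destab}) to contradict unstabilizedness. That stabilization argument is the actual crux of the lemma, and your proposal contains no analogue of it, so as written the proof would not close.
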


\begin{proof}
The second statement follows from the first by reversing the roles in the proof below. We prove the first statement.

By Lemma \ref{lemma_pants_structure}, $\psi_2$ and $\psi_3$ bound cut-disks and $f_1$ and $f_2$ bound compressing disks, so the number of punctures they enclose is different modulo 2. Thus $\psi_n \neq f_1,f_2$ for $n=2,3$. 

Suppose now that $\psi_1=f_1$. In particular, $\gamma_1$ and $\psi_1$ are disjoint reducing curves. By Lemma \ref{lemma_red_intersect}, the number of punctures enclosed by $\gamma_1$ and $\psi_1$ must be the same. For if $\gamma_1$ bounds two punctures and $\psi_1$ bounds four punctures, then the two curves will intersect. But $\gamma_1$ and $f_1$ are distinct curves in the pants decomposition $p_{ij}^{i}$, so they cannot both enclose four punctures. We conclude that $\psi_1=f_1$ and $\gamma_1$ enclose two punctures each. 
Let $f'_1$ and $f'_2$ be simple closed curves such that $p_{ik}^i=\{\gamma_1, \gamma_2, \gamma_3, f'_1, f'_2\}$ completes a defining pair $(p_{ij}^i, p_{ik}^i)$ for $T_{ij} \cup \T_{ik}$. Focus our attention of the A-move corresponding to $f_1 \mapsto f'_1$, which happens inside a 4-holed sphere $E$.  The boundaries of $E$ correspond to boundaries of small neighborhoods of punctures or to some $\gamma_n$-curves. Notice that one or two boundaries of $E$ correspond to some $\gamma_n$-curves. 

\textbf{Case 1:} $\boundary E$ has exactly one $\gamma_n$ loop. 

After a surface homeomorphism, we can draw $E$ as in the Figure \ref{fig_various_subsurfaces}(a). Here, after choosing coordinates for the 4-punctured sphere, $f_1$ is depicted as a separating curve of slope $1/0$. The conditions $|f_1\cap f'_1|=2$ and $f'_1 \cap \gamma_n = \emptyset$ imply that $f'_1$ is a separating simple closed curve in $E$ of slope $n/1$ for some $n\in \mathbb{Z}$. In other words, $f_1 =\partial \eta(c)$ and $f'_1=\partial\eta(c')$ for some properly embedded arcs $c, c'$ in $E$ such that $c$ is an arc disjoint from $\gamma_n$, and $c\cap c'=\partial c \cap \partial c'$ is exactly one puncture. We pick $c'$ so that the end disjoint from $c$ corresponds to the puncture $p$ on the same side of $f_1$ as $\gamma_n$ (see Figure \ref{fig_various_subsurfaces}(a)). 
Now, recall that $f'_1$ bounds a compressing disk for $T_{ik}$, and so $c'$ is a shadow for some arc in $T_{ik}$. Similarly, $c$ is a shadow for arcs in both $T_{ij}$ and $T_{kj}$ because $f_1=\psi_1$ is a compressing disk for both tangles. By Remark \ref{criterion_destab}, these three shadow arcs with one common endpoint imply that the bridge trisection is stabilized. This concludes Case 1.
\begin{figure}[h]
\centering
\labellist \small\hair 2pt 
\pinlabel {(a)} at 7 177
\pinlabel {$E$} [r] at 11 140
\pinlabel {$\gamma_n$} at 119 126
\pinlabel {$p$} [t] at 106 47
\pinlabel {$f'_1$} [t] at 111 21
\pinlabel {$f_1$} [l] at 26 73

\pinlabel {(b)} at 193 177
\pinlabel {$f'_1$} [b] at 250 120
\pinlabel {$\gamma_2$} [b] at 280 105
\pinlabel {$\gamma_1$} [l] at 254 58
\pinlabel {$q$} [t] at 291 74
\pinlabel {$p$} [t] at 322 74
\pinlabel {$f_1$} [t] at 356 60
\pinlabel {$\gamma_3$} [t] at 425 47

\pinlabel {(c)} at 525 177
\pinlabel {$D$} at 683 23
\pinlabel {$\gamma_1$} [t] at 615 110
\pinlabel {$q$} [r] at 605 61
\pinlabel {new $c'$} [t] at 634 61
\pinlabel {old $c'$} [l] at 538 99
\pinlabel {$x$} [l] at 701 114
\pinlabel {$f'_1$} at 699 153
\pinlabel {$p$} [t] at 701 61

\endlabellist
\includegraphics[width=.9\textwidth]{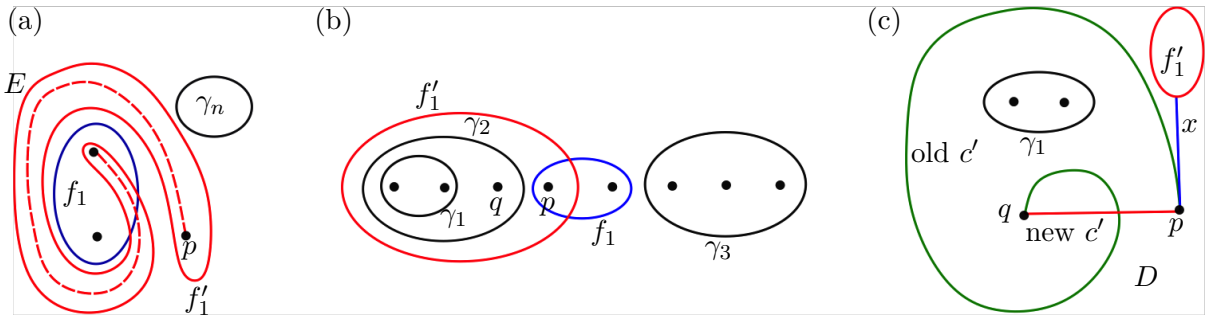}
\caption{Various subsurfaces of $\Sigma$.} 
\label{fig_various_subsurfaces}
\end{figure}

\textbf{Case 2:} $\boundary E$ has two $\gamma_n$-loops. 

Both must bound cut-disks. After a surface homeomorphism, the curves in $p^i_{ij}$ can be depicted as in Figure \ref{fig_various_subsurfaces}(b). Observe here that $f'_1$ must surround four punctures on each side. Let $D$ be the 4-holed sphere inside $\Sigma$ co-bounded by $f'_1$, $\gamma_1$, $\partial\eta(p)$ and $\partial\eta(q)$ (see Figure \ref{fig_various_subsurfaces}(b)-(c)). By construction, there exists an arc $x$ in $D$ with endpoints in $p$ and $E$ such that $x$ is disjoint from $f'_1\cap D$. 
Since $\gamma_1$ and $f'_1$ both bound compressing disks for $T_{ik}$, it follows that there is an arc in $T_{ik}$ connecting $p$ and $q$. Furthermore, such arc has a shadow arc $c'$ in $\Sigma$ with interior disjoint from $f'_1$ and $\gamma_1$. Regarded as a subset of $D$, the arc $c'$ connects $E$ and $\gamma_1$. 
We can slide $c'$ over $\gamma_1$ several times and choose a shadow arc $c$ with interior disjoint from $x$. In particular, $c$ intersects $f_1$ in one point. This, together with the fact that $f_1=\psi_1$ bounds reducing curve for $T_{kj}\cup \T_{ij}$, implies the existence of a shadow arc $c$ for both $T_{kj}$ and $T_{ij}$ with $c\cap c' = \partial c \cap \partial c' = \{p\}$. By Lemma \ref{criterion_destab} we conclude that $\mc{T}$ is stabilized. 
\end{proof}

\begin{lemma}\label{lem_gamma1_first}\label{remark_reverse_lem}
Suppose $e$ is an edge in $\mc{P}(\Sigma)$ with initial endpoint at  $p_{ij}^{i}$ then $e$ does not move $\gamma_1$ to any $\psi_n$-loop in $p^j_{ij}$. Similarly, if $e$ is an edge in $\mc{P}(\Sigma)$ with terminal endpoint at  $p_{ij}^{j}$, then $e$ does not move any $\gamma_n$-loop of $p^i_{ij}$ to $\psi_1$.
\end{lemma}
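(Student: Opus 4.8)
The plan is to argue by contradiction, assuming such an edge $e$ moves $\gamma_1 \mapsto \psi_n$ for some $n \in \{1,2,3\}$, and to derive a stabilization of $\mc{T}$ contradicting the standing hypothesis. The strategy parallels the proof of Lemma \ref{lemma_psi_f_distinct}: produce three shadow arcs with a single common endpoint and disjoint interiors, one for each of two tangles meeting along $\Sigma$ and a third for the opposite tangle, then invoke Lemma \ref{criterion_destab}.

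First I would record what the A-move $\gamma_1 \mapsto \psi_n$ tells us. Since $\gamma_1$ is a reducing curve for $L_i$ (by Lemma \ref{lemma_pants_structure}) it bounds compressing disks in both $T_{ij}$ and $T_{ik}$, hence it is $\partial \eta(\alpha)$ for a shadow arc $\alpha$ that is simultaneously a shadow for an arc of $T_{ij}$ and (via a parallel copy $\alpha'$) for an arc of $T_{ik}$ -- this is exactly the construction used in Lemma \ref{lemma_red_intersect}. The A-move takes place in a 4-holed sphere $E \subset \Sigma$ whose boundary consists of puncture-neighborhoods and possibly one or two of the other curves of $p^i_{ij}$, namely $\gamma_n$-loops (cut-reducing curves for $L_i$) and/or $f_m$-loops. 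The new curve is $\psi_n$: if $n = 1$ it is a reducing curve for $L_j$, so it bounds a compressing disk in $T_{ij}$; if $n \in \{2,3\}$ it is a cut-reducing curve for $L_j$, so it bounds a cut disk in $T_{ij}$. In either case $\psi_n$ bounds a c-disk in $T_{ij}$, so the arc $\beta$ with $\psi_n = \partial \eta(\beta)$ (a core arc of the 4-holed sphere of the A-move) is a shadow for $T_{ij}$. The two arcs $\alpha$ and $\beta$ sit inside $E$ and, because $|\gamma_1 \cap \psi_n| = 2$, they can be taken to meet in exactly one point. Choosing this point to be a shared endpoint, and sliding $\beta$ over the $\gamma_n$-boundary components of $E$ as in Case 2 of Lemma \ref{lemma_psi_f_distinct} if necessary to make the interior disjoint from $\alpha$, we obtain: a shadow arc $\alpha$ for $T_{ij}$, a parallel shadow arc $\alpha'$ for $T_{ik}$, and a shadow arc $\beta$ for $T_{ij}$ sharing exactly one endpoint with $\alpha'$ (or with $\alpha$) and otherwise disjoint. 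Wait -- I need $\beta$ to be a shadow for a \emph{different} tangle than the one $\alpha$ shadows in order to apply Lemma \ref{criterion_destab}; since $\alpha$ shadows both $T_{ij}$ and $T_{ik}$ and $\beta$ shadows $T_{ij}$, the right reading is: $\alpha$ is a shadow for $T_{ik}$, $\beta$ is a shadow for $T_{ij}$, and a second copy of $\beta$ (or $\alpha'$) provides the $T_{jk}$-side -- so the book-keeping of which arc serves which role must be done carefully. The honest statement of Lemma \ref{criterion_destab} requires a shadow $\alpha$ common to $(B_{ij},T_{ij})$ and $(B_{jk},T_{jk})$ and a shadow $\gamma$ for $(B_{ik},T_{ik})$; so I would instead set up the arcs so that the common-endpoint configuration is realized with these specific index pairs, which is possible because $\gamma_1$'s compressing disk in $T_{ik}$ and $\psi_n$'s c-disk in $T_{ij}$ are on the correct sides of $\Sigma$.

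The main obstacle I expect is precisely this index book-keeping together with handling the case $\boundary E$ has two $\gamma_n$-loops: when the A-move occurs in a 4-holed sphere bounded by two cut-reducing curves, $\psi_n$ is forced to enclose four punctures on each side, and the shadow arc $\beta$ for $\psi_n$ will a priori cross one of the $\gamma_n$-curves, so it must be isotoped by sliding over a $\gamma_n$-loop to clear its interior -- exactly the maneuver in Case 2 of Lemma \ref{lemma_psi_f_distinct}, and I would cite that argument rather than repeat it. A secondary subtlety is verifying that $\gamma_1$ and $\psi_n$ really do intersect in exactly two points (not zero): if they were disjoint then $\psi_n$ could not be the image of $\gamma_1$ under an A-move (an A-move changes a curve to one meeting it in two points), so this is automatic from the definition of A-move. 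Finally, the ``similarly'' clause is the exact mirror image: reverse the roles of $i$ and $j$ and of $\gamma$ and $\psi$, and of ``initial'' and ``terminal'', so that an edge $e$ terminating at $p^j_{ij}$ and moving some $\gamma_n$-loop of $p^i_{ij}$ to $\psi_1$ yields the same stabilization picture with $\psi_1$ playing the role of the reducing curve; I would dispatch it in one sentence.
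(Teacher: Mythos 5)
Your approach does not match the paper's and has several genuine gaps that I believe are fatal.

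The paper's proof of Lemma \ref{lem_gamma1_first} is a short parity argument that never invokes the stabilization criterion. It first rules out $\gamma_1 \mapsto \psi_1$ directly from Remark \ref{remark_combinatorial_property}: if $\gamma_1$ bounds two punctures and $|\gamma_1 \cap \psi_1|=2$, then the two punctures inside $\gamma_1$ are separated by $\psi_1$, which the remark forbids for a pair of reducing curves; the four-puncture case is killed by the configuration from Lemma \ref{lem_combinatorics} (see Figure \ref{fig_gamma1_first}). It then rules out $\gamma_1 \mapsto \psi_2, \psi_3$ by observing that all four boundary components of the 4-holed sphere $E$ where the A-move takes place are ``odd curves'' (each encloses an odd number of punctures, or is a single puncture). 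Since the new curve in $E$ is the boundary of a neighborhood of a union of two such odd boundaries, it encloses an even number of punctures, whereas $\psi_2,\psi_3$ are cut-reducing curves and enclose an odd number. That is the whole proof.

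Your proposal attempts a stabilization argument modeled on Lemma \ref{lemma_psi_f_distinct}, and it runs into problems the paper's proof does not have. First, you write $\gamma_1 = \partial\eta(\alpha)$ for a shadow arc $\alpha$; but $\gamma_1$ may enclose four punctures (configuration 3 of Lemma \ref{lem_combinatorics}), in which case it is not the boundary of a regular neighborhood of a single shadow arc, so the construction you borrow from Lemma \ref{lemma_red_intersect} (which is stated for the specific situation where $L_i$ has a 1-bridge component) does not apply. Second, for $n \in \{2,3\}$ the curve $\psi_n$ bounds a cut disk, hence encloses three punctures, and is therefore also not $\partial\eta$ of a shadow arc; your sentence ``the arc $\beta$ with $\psi_n = \partial\eta(\beta)$ \dots is a shadow for $T_{ij}$'' is false in exactly the cases $n=2,3$, which are the ones the parity argument handles. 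Third, even when both curves do bound two punctures, the statement ``they can be taken to meet in exactly one point, choosing this point to be a shared endpoint'' is unjustified: $|\gamma_1\cap\psi_n|=2$ forces the two shadow arcs to cross once, but that crossing is generically an interior intersection, not a common puncture endpoint, and you have no freedom to ``choose'' which — the endpoints are determined by which punctures the curves enclose, and for $\gamma_1\mapsto\psi_1$ these are in fact disjoint pairs by Remark \ref{remark_combinatorial_property}. Finally, you yourself flag that the index book-keeping required by Lemma \ref{criterion_destab} (a shadow common to $T_{ij}$ and $T_{jk}$ plus a shadow for $T_{ik}$) has not been pinned down; that unresolved book-keeping is not a minor detail but the heart of making a stabilization argument work, and in this lemma it does not line up as it does in Lemma \ref{lemma_psi_f_distinct} because there $f_1=\psi_1$ supplies a curve bounding compressing disks on both sides of $\Sigma$, which is exactly what your setup lacks here.

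In short: the intended proof is combinatorial parity on the 4-holed sphere plus the structure of reducing curves, and the stabilization route you sketch breaks down at the first step when $\gamma_1$ bounds four punctures or $\psi_n$ is a cut curve.
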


\begin{proof}[Proof of Lemma \ref{lem_gamma1_first}]
The second statement follows from the first by interchanging the roles of $\gamma_1$ and $\psi_1$, and so we prove only the first statement. Suppose, to establish a contradiction, that $\gamma_1$ is moved to some $\psi_n$-loop by $e$.

First we show that $e$ does not move $\gamma_1$ to $\psi_1$.  Suppose $\gamma_1$ bounds a twice-punctured disc $D$. If $e$ moves $\gamma_1$ to $\psi_1$ then $|\gamma_1 \cap \psi_1| = 2$, so $D \cap \psi_1$ consists of a single arc. It follows that the two punctures of $D$ are on opposite sides of $\psi_1$, contradicting Remark \ref{remark_combinatorial_property}. Similarly, $\psi_1$ does not bound a twice-punctured disc. 

Consequently, if $e$ moves $\gamma_1$ to $\psi_1$, then both $\gamma_1$ and $\psi_1$ enclose four punctures. This sets us in the third configuration of Lemma \ref{lem_combinatorics}. First, observe that $f_1$ and $f_2$ must be separated by $\gamma_1$. This holds since $p^i_{ij}=\{\gamma_1, \gamma_2, \gamma_3, f_1, f_2\}$ is a pants decomposition for $\Sigma$, and only $\gamma_1$, $f_1$ and $f_2$ bound an even number of punctures. Thus, after a surface homeomorphism, we can draw $\Sigma$ and $p^i_{ij}$ as in Figure \ref{fig_gamma1_first}. We see, therefore, that if $e$ moves $\gamma_1$ to $\psi_1$, then $\gamma_1$ and $\psi_1$ will both bound the same three (out of four) punctures, contradicting Lemma \ref{lem_combinatorics}. Hence, $\gamma_1$ cannot be moved first to $\psi_1$. 

We will now see that, due to parity constraints, if $e$ moves $\gamma_1$, then $\gamma_1$ is moved to a curve bounding an even number of punctures. In particular, $\gamma_1$ is not moved to $\psi_n$ for $n=2,3$. In order to do this, we focus on the 4-holed sphere, denoted by $E$, corresponding to the first A-move. The four boundary components of $E$ are loops (or punctures), $\{\partial_1, \partial_2, \partial_3, \partial_4\}$. If $\gamma_1$ bounds four punctures, up to surface homeomorphism, then $\Sigma$ can be depicted as in Figure \ref{fig_gamma1_first} and we see that each component of $\boundary E$ is an odd curve. On the other hand, if $\gamma_1$ encloses exactly two punctures, then two components of $\boundary E$ are single punctures. The other two boundaries, say $\partial_3$ and $\partial_4$, will enclose punctures 1 and 5, 2 and 4, or 3 and 3, respectively. Notice that they cannot enclose punctures 2 and 4, since that will force the existence of a fourth curve in $p^i_{ij}$ bounding even number of punctures. Thus, in any case, all the components of $\boundary E$ are either a single puncture or enclose an odd number of punctures. Consequently, $e$ moves $\gamma_1$ to a curve enclosing an even number of punctures, as desired.
\end{proof}
\begin{figure}[h]
\labellist \small\hair 2pt 
\pinlabel {$\gamma_1$} at 158 8
\pinlabel {$\gamma_2$} at 44 58
\pinlabel {$\gamma_3$} at 220 58
\pinlabel {$f_1$} at 115 78
\pinlabel {$f_2$} at 287 80
\endlabellist
\centering
\includegraphics[width=.3\textwidth]{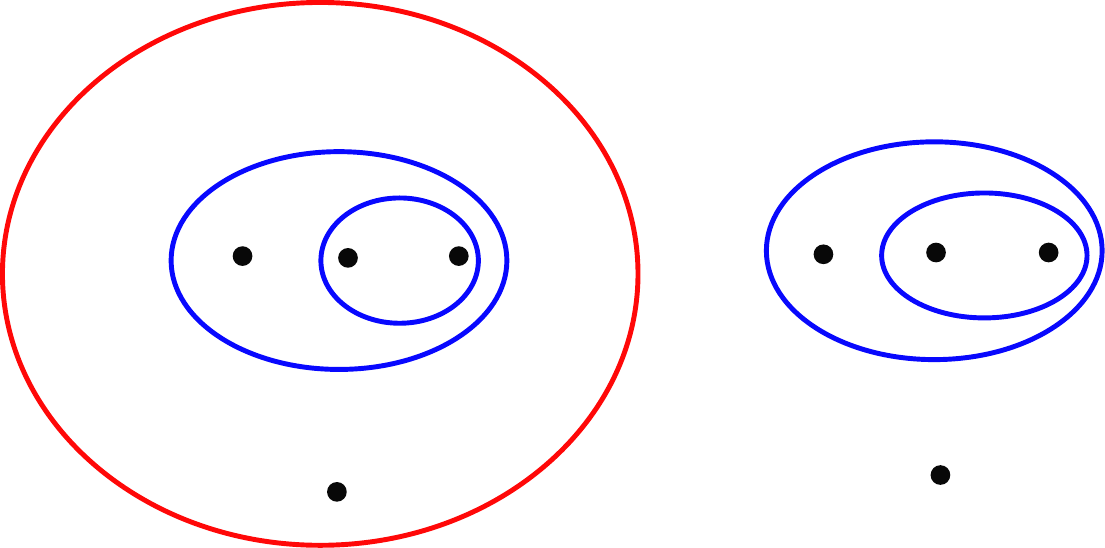}
\caption{When the reducing curve bounds four punctures, the two cut curves lie on distinct sides.}
\label{fig_gamma1_first}
\end{figure}

\begin{lemma}\label{lem_gamma1_last}
Suppose $e$ is an edge in $\mc{P}(\Sigma)$ with initial endpoint at  $p_{ij}^{i}$, then $e$ does not move any $\gamma_n$-loop of $p^i_{ij}$ to $\psi_1$.
Similarly, if $e$ is an edge in $\mc{P}(\Sigma)$ with terminal endpoint at  $p_{ij}^{j}$ then $e$ does not move $\gamma_1$ to any $\psi_n$-loop.  
\end{lemma}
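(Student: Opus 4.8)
\textbf{Proof proposal for Lemma \ref{lem_gamma1_last}.}

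The plan is to reduce the first statement to the case analysis already developed in Lemma \ref{lem_gamma1_first} and Lemma \ref{lemma_psi_f_distinct}, and to obtain the second statement by the usual reversal of roles. So suppose $e$ is an edge with initial endpoint $p^i_{ij}$ that moves some $\gamma_n$-loop to $\psi_1$. Since a single $A$-move changes only one curve, $e$ moves exactly one of $\gamma_1, \gamma_2, \gamma_3$; the target curve $\psi_1$ is a reducing curve for $L_j$, hence bounds an even number of punctures. By Lemma \ref{lem_gamma1_first}, $e$ cannot move $\gamma_1$ to $\psi_1$ (that lemma forbids $\gamma_1 \mapsto \psi_n$ for all $n$, in particular $n=1$). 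Thus $e$ must move $\gamma_2$ or $\gamma_3$, i.e.\ one of the two cut-reducing curves, to the reducing curve $\psi_1$.

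First I would rule this out on parity grounds exactly as in the last paragraph of the proof of Lemma \ref{lem_gamma1_first}: the $A$-move $e$ takes place inside a $4$-holed sphere $E\subset\Sigma$, and I want to show every boundary component of $E$ is either a single puncture or encloses an odd number of punctures, which forces the curve produced by $e$ to enclose an even number of punctures --- contradicting the fact that $\psi_1$ (a reducing curve) bounds an even number of punctures. Here the moved curve is $\gamma_2$ (say), which encloses exactly three punctures. Using that $p^i_{ij} = \{\gamma_1, \gamma_2, \gamma_3, f_1, f_2\}$ is a pants decomposition and that $\gamma_1, f_1, f_2$ are the curves enclosing an even number of punctures while $\gamma_2, \gamma_3$ enclose an odd number, I would analyze the possible positions of $\gamma_2$ relative to $\gamma_1, \gamma_3$ in the relevant configuration of Lemma \ref{lem_combinatorics}, drawing $\Sigma$ and $p^i_{ij}$ up to homeomorphism as in Figure \ref{fig_gamma1_first}. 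In each configuration the two boundary components of $E$ other than the two punctures of $\gamma_2$ are forced (by the pants-decomposition parity constraint, which forbids a fourth even curve appearing as a boundary of $E$) to be a reducing curve together with a cut-reducing curve, or a single puncture together with an appropriate curve; in all cases each is a single puncture or encloses an odd number of punctures. This gives the contradiction and hence the first statement.

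For the second statement, reverse the roles of the two sides: if $e$ has terminal endpoint $p^j_{ij}$ and moves $\gamma_1$ to some $\psi_n$-loop, then reading $e$ backwards it is an edge with initial endpoint $p^j_{ij}$ moving $\psi_n$ to $\gamma_1$, and $\gamma_1$ is a reducing curve for $L_i$; applying the first statement (with $i$ and $j$ interchanged) to this reversed edge gives a contradiction. I expect the main obstacle to be the casework in the parity step: unlike in Lemma \ref{lem_gamma1_first}, the moved curve $\gamma_2$ encloses an odd number of punctures, so one must be careful that the two ``other'' boundary components of $E$ cannot themselves both enclose an even number of punctures --- this is where the constraint that a pants decomposition on an $8$-punctured sphere has exactly three even curves (namely the reducing curve and the two $f$-curves) does the work, and one must check it survives in each of the configurations of Lemma \ref{lem_combinatorics} that permit a cut-reducing curve adjacent to the reducing curve $\psi_1$.
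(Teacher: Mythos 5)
Your reduction to the cases $\gamma_2 \mapsto \psi_1$ and $\gamma_3 \mapsto \psi_1$ via Lemma \ref{lem_gamma1_first}, and your reversal argument for the second statement, are both fine. The problem is the parity step, which cannot work.

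The parity argument in the last paragraph of Lemma \ref{lem_gamma1_first} succeeds because the moved curve there is $\gamma_1$, which encloses an \emph{even} number of punctures; once one shows all four boundary components of $E$ are single punctures or odd curves, one concludes the A-move replaces $\gamma_1$ with another even curve, ruling out the odd targets $\psi_2, \psi_3$. Here your moved curve is $\gamma_2$ (or $\gamma_3$), which encloses three punctures — it is odd. If all boundary components of $E$ were single punctures or odd curves, then $\gamma_2$, being a curve inside $E$ separating two pairs of those boundary components, would enclose an even number of punctures, contradicting that $\gamma_2$ is a cut-reducing curve. So the premise you want to establish is false in this situation. Moreover, even setting that aside, your stated conclusion is not a contradiction: you derive that the curve produced by $e$ is even and then say this contradicts $\psi_1$ being even, but two even curves do not contradict each other. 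There is simply no parity obstruction to an odd curve being replaced by an even one in a single A-move when the 4-holed sphere has one even boundary component on each side of $\gamma_2$, so the parity route is a dead end.

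The paper's proof uses a genuinely different mechanism. It exploits the positional hypothesis: for the (equivalent, after the $\gamma \leftrightarrow \psi$ swap) case of an edge terminating at $p^j_{ij}$ that moves $\gamma_1 \mapsto \psi_2$, the unmoved reducing curve $\psi_1$ and the curve $\gamma_1$ sit in a common pants decomposition (the vertex just before $p^j_{ij}$) and are therefore disjoint. Lemma \ref{lemma_red_intersect} then forces both $\gamma_1$ and $\psi_1$ to bound exactly two punctures. With that fixed, the paper examines the 4-holed sphere of the A-move, invokes Part 2 of Lemma \ref{lem_experimental_2} to locate $\psi_1$ among its boundary components, pushes into the dual pants decomposition $p^j_{jk}$ to analyze $h'_1$, and produces shadow arcs satisfying Lemma \ref{criterion_destab}, so the trisection would be stabilized — a contradiction. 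If you want to complete your proof, you will need to import that disjointness-plus-stabilization strategy rather than parity; parity alone cannot distinguish $\gamma_2 \mapsto \psi_1$ from a legitimate A-move.
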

\begin{proof}
As we did in Lemma \ref{lem_gamma1_first}, it is enough to show the first statement. The case $\psi_1 \mapsto \gamma_1$ has been discussed in the proof of Lemma \ref{lem_gamma1_first}. 
%

We study the case $\gamma_1 \mapsto \psi_2$. In particular, $\gamma_1$ and $\psi_1$ must be disjoint because the endpoint of $e$ is $p^j_{ij}$. Thus, Lemma \ref{lemma_red_intersect} forces both $\gamma_1$ and $\psi_1$ to bound two punctures each. The 4-holed sphere corresponding to $e$ is drawn in Figure \ref{fig_221_half}(a). Observe that we are forced, by Lemma \ref{lemma_pants_structure}, to have one cut-curve inside $\partial_4$ and one compressing curve $x$. Here, the sets of curves $\{x,\partial_2, \partial_4\}$ and $\{h_1, h_2, \psi_1\}$ agree. Since $\psi_1$ bounds two punctures, we can assume $\partial_4 = h_1$.
Moreover, Part 2 of Lemma \ref{lem_experimental_2} implies that $\psi_1 = \partial_2$, leaving us with $x=h_2$.
\begin{figure}[h]
\centering
\labellist \small\hair 2pt 
\pinlabel {(a)} at -20 165 
\pinlabel {$p$} at 25 140
\pinlabel {$v$} at 150 155
\pinlabel {$w$} at 175 155
\pinlabel {$q$} at 25 63 
\pinlabel {$s$} at 250 63
\pinlabel {$u$} at 105 60
\pinlabel {$x$} at 170 70
\pinlabel {$\psi_3$} at 220 80
\pinlabel {$\partial_4$} at 60 25
\pinlabel {$\partial_2$} at 120 160
\pinlabel {$\gamma_1$} at 50 160
\pinlabel {$\psi_2$} at 240 120

\pinlabel {(b)} at 320 165
\pinlabel {$q$} at 375 75
\pinlabel {$u$} at 445 85
\pinlabel {$x=h_2$} at 450 105
\pinlabel {$h'_2$} at 520 80
\pinlabel {$\psi_3$} at 545 50
\pinlabel {$c'$} at 400 45
\pinlabel {$\gamma_1$} at 340 140
\pinlabel {$h'_1$} at 575 40

\endlabellist 
\includegraphics[width=.85\textwidth]{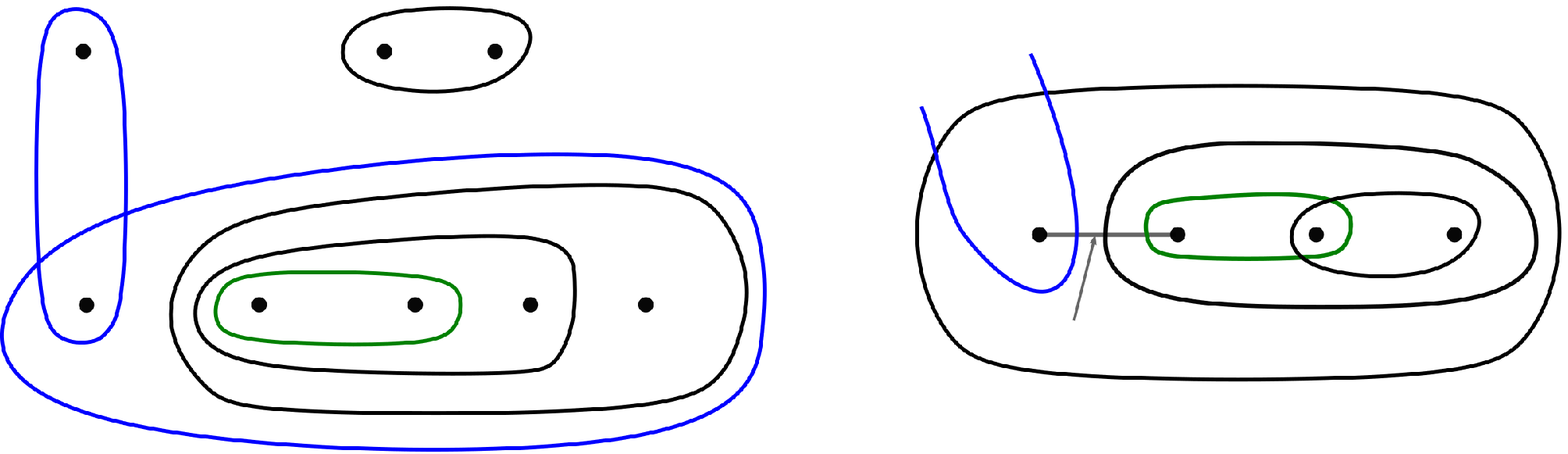}
\caption{A close look at the A-move $\gamma_1 \mapsto \psi_2$.}
\label{fig_221_half}
\end{figure}

Focus on $h'_1 \in p^j_{jk}$. If $h'_1$ bounds two punctures, we can proceed as in the previous paragraph and conclude that the bridge trisection is stabilized. Thus $h'_1$ must bound four punctures. 
Here, $h'_1$ bounds $q$ and the curve $\psi_3$. By focusing in such disk (see Figure \ref{fig_221_half}(b)), we see that $h'_2$ must be disjoint from $\gamma_1$ because $(h_2\cup h'_2)\cap \psi_3=\emptyset$. This lets us to find a a shadow $c'$ for $T_{jk}$ connecting $q$ and $u$, such that $c'$ is disjoint from $h'_1$ and $h'_2$. 
We can slide $c'$ over $h'_1$ and $h'_2$ in order to arrange that $c'$ and $\gamma_1$ intersect once. Thus, the bridge trisection is stabilized by Lemma \ref{criterion_destab}.
\end{proof}

\subsection{Improved lower bound}
We are ready to prove the lower-bound of Theorem \ref{thm_main}. The main result of this Section is Theorem \ref{thm_lower_bound} which states that the Kirby-Thompson invariant of a (4,2)-bridge trisection of a knotted sphere in $S^4$ is at least $15$. 

As before, let $S$ be a connected surface in $S^4$ with an unstabilized, irreducible $(4,2)$-bridge trisection $\mc{T}$. 
Fix $\{i,j,k\} = \{1,2,3\}$. Let $(p_{ij}^{i},p_{ik}^{i})$ and  $(p_{ij}^{j},p_{jk}^{j})$ be defining pairs. Denote the curves in $p^i_{ij}$ and $p^j_{ij}$ by $p_{ij}^{i}=\{\gamma_1,\gamma_2,\gamma_3,f_1,f_2\}$ and $p_{ij}^{j}=\{\psi_1,\psi_2,\psi_3,h_1,h_2\}$ as in Lemma \ref{lemma_pants_structure}. 
We know that $f_1, f_2, h_1, h_2$ bound compressing disks for $T_{ij}$; also each $\gamma_n$-curve is a reducing or cut-reducing curve for $L_i$ and each $\psi_n$-curve is a reducing or cut-reducing curve for $L_j$; in fact, $\gamma_1$ and $\psi_1$ are reducing curves and the others are cut-reducing curves. Recall that there are essential, simple closed curves $f'_1$ and $f'_2$ such that $p^i_{ik}=\{\gamma_1, \gamma_2, \gamma_3, f'_1, f'_2\}$ completes an efficient defining pair $(p^i_{ij}, p^i_{ik})$. Likewise, there are essential, simple closed curves $h'_1$ and $h'_2$ such that $p^j_{jk}=\{\psi_1, \psi_2, \psi_3, h'_1, h'_2\}$ completes an efficient defining pair $(p^j_{ij}, p^j_{jk})$. 

The proof of Theorem \ref{thm_lower_bound} will be broken into three propositions: \ref{prop_longer_proposition}, \ref{prop_five_22} and \ref{prop_five_44}. Each of them proving that $d(p^i_{ij}, p^j_{ij})\geq 5$ for each pair, depending on the number of punctures bounded by $\gamma_1$ and $\psi_1$. We begin in Proposition \ref{lem_new_long_proposition} showing that such distance is at least 4. 

\begin{proposition}\label{lem_new_long_proposition}
If $\lambda(ij)$ is a path from $p_{ij}^{i}$ to $p_{ij}^{j}$. The length of $\lambda(ij)$ is at least 4. If it is equal to 4, then at least one of $f_1$ and $f_2$ is unmoved by $\lambda(ij)$. 
\end{proposition}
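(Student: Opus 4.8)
The plan is to count the curves that must be moved by any path $\lambda(ij)$ from $p_{ij}^i$ to $p_{ij}^j$, using the structural information from Lemma \ref{lemma_pants_structure} together with the non-equality facts of Lemma \ref{lemma_psi_f_distinct}. Write $p_{ij}^i=\{\gamma_1,\gamma_2,\gamma_3,f_1,f_2\}$ and $p_{ij}^j=\{\psi_1,\psi_2,\psi_3,h_1,h_2\}$. Since the length of $\lambda(ij)$ is at least the number of curves of $p_{ij}^i$ that get moved by it, it suffices to show that at least four of the five curves of $p_{ij}^i$ are moved; and if exactly four are moved, then (since $\gamma_1,\gamma_2,\gamma_3,f_1,f_2$ are the five moved-or-unmoved curves) exactly one is unmoved, and I must pin down that it can be taken to be one of $f_1,f_2$.

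First I would observe that a curve $c$ of $p_{ij}^i$ is unmoved by $\lambda(ij)$ precisely when $c$ belongs to every vertex of $\lambda(ij)$, in particular $c\in p_{ij}^j$, so $c$ is one of $\psi_1,\psi_2,\psi_3,h_1,h_2$. By Lemma \ref{lemma_psi_f_distinct}, no $\psi_n$-loop equals any $f_m$ and no $\gamma_n$-loop equals any $h_m$. Hence an unmoved $f_m$ would have to be some $h_{m'}$, and an unmoved $\gamma_n$ would have to be some $\psi_{n'}$. So the unmoved curves among $p_{ij}^i$ split into two types: $\gamma_n$'s that coincide with $\psi_{n'}$'s, and $f_m$'s that coincide with $h_{m'}$'s. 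The key numerical point is a parity/enclosure argument: $\gamma_1,\psi_1$ are reducing curves and $\gamma_2,\gamma_3,\psi_2,\psi_3$ are cut-reducing curves (Lemma \ref{lemma_pants_structure}), and I want to use Remark \ref{remark_combinatorial_property} (the combinatorial constraints on which punctures reducing/cut-reducing curves of $L_i$ and $L_j$ can share) together with the fact that within a single pants decomposition all curves are pairwise non-isotopic and disjoint, to show that at most two of $\{\gamma_1,\gamma_2,\gamma_3\}$ can simultaneously be isotopic to curves in $\{\psi_1,\psi_2,\psi_3\}$. Indeed if all three $\gamma_n$ equalled all three $\psi_n$, then together with $f_1=h_{?}$ or $f_2=h_{?}$ one would get $p_{ij}^i$ and $p_{ij}^j$ sharing four or five curves, forcing distance $\le 1$, contradicting Lemma \ref{lem_efficient_pairs} which gives $d(p_{ij}^i,p_{ik}^i)=2$ and more to the point would collide with the configuration analysis; more carefully, I would argue directly from the 8-puncture count and Remark \ref{remark_combinatorial_property} that $\{\gamma_2,\gamma_3\}$ and $\{\psi_2,\psi_3\}$ cannot all four be mutually disjoint and essential unless they coincide in pairs in a way ruled out by Lemma \ref{lem_combinatorics}.

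Assembling this: at most one of $f_1,f_2$ can be unmoved (they are distinct curves, and if both were unmoved they'd both lie in $p_{ij}^j$, so $\{f_1,f_2\}=\{h_1,h_2\}$, whence $p_{ij}^i$ and $p_{ij}^j$ differ only among $\{\gamma_n\}$ vs $\{\psi_n\}$ — three curves on each side — and then a short argument shows at least one $\gamma_n$ must also move, but even granting all three $\gamma_n$ unmoved we get the two decompositions equal, contradicting that they are distinct vertices at positive distance). Likewise at most one, in fact a careful count shows at most one total, of the curves of $p_{ij}^i$ can be unmoved: if two were unmoved, the two decompositions would share two curves; the remaining picture on the complementary subsurface (a sphere with few punctures) would have the two decompositions at distance equal to the number of remaining curves moved, and chasing the parities via Lemmas \ref{lemma_psi_f_distinct} and Remark \ref{remark_combinatorial_property} shows this forces a shared third curve, hence a contradiction with distinctness or with Lemma \ref{lem_efficient_pairs}. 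Therefore at least four curves of $p_{ij}^i$ are moved, giving length $\ge 4$; and in the equality case exactly one curve is unmoved, which by the dichotomy above and the fact that no $\gamma_n$ can be unmoved without forcing a second shared curve (again the parity argument: an unmoved $\gamma_n=\psi_{n'}$ together with the reducing curve constraints pins the enclosed punctures and forces another coincidence) must be $f_1$ or $f_2$.

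\textbf{Main obstacle.} The delicate part is the last step: ruling out that the single unmoved curve is a $\gamma_n$-loop, i.e. showing that if exactly one curve of $p_{ij}^i$ is unmoved then it is one of $f_1,f_2$ rather than one of $\gamma_1,\gamma_2,\gamma_3$. This needs the full force of Remark \ref{remark_combinatorial_property} and the configuration trichotomy of Lemma \ref{lem_combinatorics} to show that a shared $\gamma_n=\psi_{n'}$ (reducing or cut-reducing on both sides) cannot occur in isolation — it always drags along a second coincidence among the curves, contradicting the assumption that only one is unmoved. I expect this to be where most of the case-work lives: separating the sub-cases by whether the shared curve is a reducing curve ($\gamma_1=\psi_1$, handled or excluded via Lemma \ref{lemma_red_intersect}) or a cut-reducing curve ($\gamma_n=\psi_m$ with $n,m\in\{2,3\}$), and in each using the enclosed-puncture bookkeeping to produce the forced second coincidence.
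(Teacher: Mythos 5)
Your proposal misses the decisive and much simpler observation that the paper uses as its first step: if any $\gamma_n$-loop is unmoved by $\lambda(ij)$, then (by Lemma \ref{lemma_psi_f_distinct}) it must equal some $\psi_m$-loop; but since $\gamma_n$ belongs to both $p_{ij}^i$ and $p_{ik}^i$ it bounds c-discs in $T_{ij}$ and $T_{ik}$, while $\psi_m$ also belongs to $p_{jk}^j$ and so bounds a c-disc in $T_{jk}$ --- hence $\gamma_n$ bounds c-discs in all three tangles, making $\mc{T}$ reducible by Lemma \ref{lem_1}, a contradiction. This shows at once that all three $\gamma_n$-loops are moved. You replace this clean argument with a vaguer ``parity/enclosure'' argument via Remark \ref{remark_combinatorial_property} that only aims at the weaker claim ``at most two of the $\gamma_n$ are unmoved,'' which is insufficient.

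More seriously, your overall plan is to show that at least four of the five curves of $p_{ij}^i$ are moved. That claim is actually \emph{false}: the length-4 paths considered in Proposition \ref{prop_longer_proposition}, Case 2, have both $f_1$ and $f_2$ unmoved (one $\gamma_n$-loop moves twice), so a length-4 path may move only three distinct curves. The paper does not establish ``four curves moved''; it establishes that \emph{every} $\gamma_n$-loop is moved (giving length $\geq 3$), observes that in the length-3 and length-4 cases respectively both / at least one of $f_1, f_2$ is unmoved, and then devotes the entire rest of the proof to a case analysis (using Lemmas \ref{lem_gamma1_first}, \ref{lem_gamma1_last}, \ref{lemma_red_intersect} and Remark \ref{remark_combinatorial_property}, together with an examination of the four-holed sphere where the A-move $\gamma_2 \mapsto \gamma_2'$ occurs) to rule out length exactly 3. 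Your proposal does not address this length-3 exclusion at all: the parenthetical about ``even granting all three $\gamma_n$ unmoved we get the two decompositions equal'' concerns the scenario where all five curves are unmoved, which is irrelevant. The hard content of the proposition --- that a path of length 3 cannot exist --- is entirely missing.
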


\begin{proof}
By Lemma \ref{lemma_psi_f_distinct}, no $\psi_n$ loop is equal to $f_1$ or $f_2$ and no $\gamma_n$ loop is equal to $h_1$ or $h_2$. Thus, if some $\gamma_n$-loop is unmoved by $\lambda(ij)$, then it is equal to some $\psi_n$-loop. But by Lemma \ref{lem_1}, this implies that $\mc{T}$ is reducible, a contradiction. Thus, $\lambda(ij)$ moves every $\gamma_n$-loop, so the length of $\lambda(ij)$ is at least 3. If it is equal to 3, then $f_1$ and $f_2$ are unmoved by $\lambda(ij)$ and if it is equal to 4, at least one of $f_1, f_2$ is unmoved by $\lambda(ij)$, as desired. Thus, we simply need to show that the length is not 3. 

Assume, for a contradiction, that the length of $\lambda(ij)$ is 3. As $f_1, f_2$ are unmoved, by Lemma \ref{lemma_psi_f_distinct}, $\{f_1, f_2\} = \{h_1, h_2\}$. By Lemma \ref{lem_1}, each of the curves $\{\gamma_1, \gamma_2, \gamma_3\}$ moves exactly once. For each $m = 1,2,3$, let $\gamma'_m$ denote the $\psi_n$-loop to which $\gamma_m$ is moved by $\lambda(ij)$. Lemmas \ref{lem_gamma1_first} and \ref{lem_gamma1_last} imply that the curves $\gamma_1$ and $\psi_1$ are not involved in the first and third A-moves of $\lambda(ij)$. Thus, $\gamma_1 \mapsto \psi_1$ must be the second A-move in $\lambda(ij)$. We can then assume that $\gamma_2$ moves first, $\gamma'_2=\psi_2$ and $\gamma'_3 = \psi_3$.

We focus on the 4-holed sphere $E$ where the A-move $\gamma_2 \mapsto \gamma'_2$ occurs. After a surface homeomorphism, we can draw $E$ as in Figure \ref{fig_case2b_part1}(a) where the parity of punctures one one side of $\partial_n$, is given by the Figure \ref{fig_case2b_part1}(a). Since $\gamma_2$ is a cut disk, one of its sides contains three punctures. Thus, we may assume that $\partial_2$ only bounds the puncture $p$ and $\partial_1$ bounds two punctures. 
We get two cases, depending on the number of punctures bounded by $\partial_3$, one or three (see Figure \ref{fig_case2b_part1}). 
\begin{figure}[h]
\centering
\labellist \small\hair 2pt 
\pinlabel {(a)} at 5 256
\pinlabel {even} at 59 199
\pinlabel {odd} at 59 111
\pinlabel {odd} at 193 195
\pinlabel {even} at 193 111
\pinlabel {$\partial_1$} [t] at 50 178
\pinlabel {$\partial_2$} [bl] at 85 127
\pinlabel {$\partial_3$} [b] at 194 221
\pinlabel {$\partial_4$} at 150 90
\pinlabel {$\gamma_2'=\psi_2$} [t] at 154 62

\pinlabel {(b)} at 315 256
\pinlabel {$\gamma_2$} [bl] at 396 236
\pinlabel {$\boundary_1$} [tr] at 348 182
\pinlabel {$\psi_2 = \gamma'_2$} at 450 28
\pinlabel {$p$} [b] at 373 110
\pinlabel {$\boundary_4$} [b] at 503 54
\pinlabel {$\boundary_3$} [b] at 492 224

\pinlabel {(c)} at 637 256
\pinlabel {$\gamma_2$} [l] at 721 240
\pinlabel {$\boundary_1$} [b] at 685 196
\pinlabel {$\psi_2 = \gamma'_2$} [t] at 811 28
\pinlabel {$p$} [b] at 691 124
\pinlabel {$r$} [t] at 774 122
\pinlabel {$s$} [t] at 811 123
\pinlabel {$x$} [b] at 853 79
\pinlabel {$q$} [t] at 816 221
\pinlabel {$\gamma_3$} at 847 60
\pinlabel {$\boundary_4$} [r] at 756 99
\endlabellist 
\includegraphics[width=.95\textwidth]{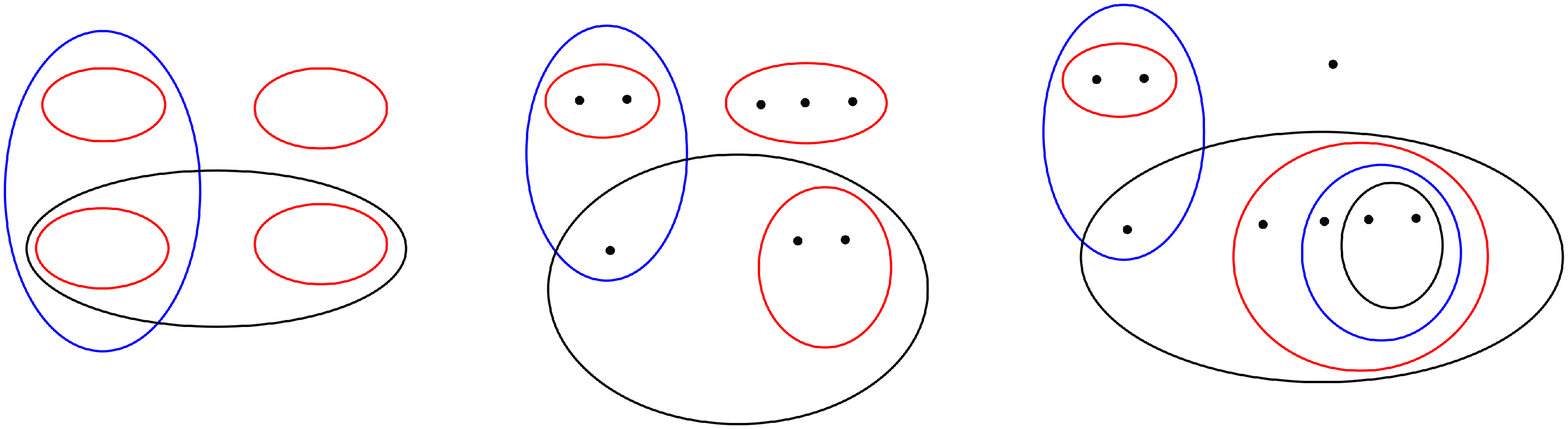}
\caption{Two subcases, depending in the number of punctures bounded by $\partial_3$.} 
\label{fig_case2b_part1}
\end{figure}

\textbf{Case 1:} $\partial_3$ bounds three punctures. In particular, $\partial_3=\gamma_3$ bounds a cut disk. 

By the previous paragraph, $\gamma_3$ has to be moved in third place and $\gamma_1$ in second. 
Since $\gamma_1 \mapsto \psi_1$ is an A-move, we know that $|\gamma_1 \cap \psi_1|=2$. This is a contradiction due to the following argument also found in Lemma \ref{lemma_red_intersect}. Denote by $D \subset \Sigma$ the twice punctured disk bounded by $\gamma_1$. We have that $\psi_1\cap D$ consists of parallel arcs separating the punctures. Since $\psi_1$ is a reducing curve, then it bounds discs in $\Sigma$ each containing an even number of punctures. Therefore, $|\psi_1\cap D|$ is even and $|\psi_1\cap \gamma_1|$ is a multiple of 4.  


\textbf{Case 2:} $\partial_3$ bounds one puncture, named $q$. 

After a surface homeomorphism, we can draw the curves as in Figure \ref{fig_case2b_part1}(c). Recall that $\gamma_1\mapsto \psi_1$ is the second A-move in $\lambda(ij)$. It follows that $\gamma_1 \in \{\partial_1, \partial_4, x\}$ and observe that all the possible configurations for the curve $\gamma'_1=\psi_1$ in Figure \ref{fig_case2b_part1}(c) contradict the combinatorial conditions in Remark \ref{remark_combinatorial_property}. Thus, this case cannot occur. 
\end{proof}

\begin{proposition}\label{prop_longer_proposition}
Suppose $\gamma_1$ bounds two punctures and $\psi_1$ bounds four. Then any path $\lambda(ij)$ from $p_{ij}^{i}$ to $p_{ij}^{j}$ must be of distance at least five. 
\end{proposition}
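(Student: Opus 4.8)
The plan is to argue by contradiction: assume some path $\lambda(ij)$ from $p^i_{ij}$ to $p^j_{ij}$ has length exactly $4$ and show this forces $\mc{T}$ to be reducible or stabilized. By Proposition \ref{lem_new_long_proposition}, at least one of $f_1,f_2$ is unmoved by $\lambda(ij)$; after renaming, say $f_2$ is unmoved. Since $f_2$ then lies in $p^j_{ij}$ and, by Lemma \ref{lemma_psi_f_distinct}, cannot be a $\psi_n$-loop, we have $f_2\in\{h_1,h_2\}$; rename so that $f_2=h_2$. The three curves $\gamma_1,\gamma_2,\gamma_3$ are all moved (an unmoved $\gamma_n$-loop would, by Lemmas \ref{lemma_psi_f_distinct} and \ref{lem_1}, coincide with a $\psi_m$-loop and make $\mc{T}$ reducible), so $\lambda(ij)$ consists of exactly four $A$-moves distributed among the four curves $\gamma_1,\gamma_2,\gamma_3,f_1$. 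There are then two shapes to consider: either each of these four curves moves exactly once — giving a bijection onto $\{\psi_1,\psi_2,\psi_3,h_1\}$ — or $f_1$ is unmoved as well and one of the $\gamma_n$-loops moves twice; I will treat the first shape in detail and reduce the second to it.

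Next I would record the local constraints. Because $\gamma_1$ bounds two punctures, $L_i$ has a component meeting $\Sigma$ twice, while $\psi_1$ bounding four punctures says $L_j$ has no such component; hence Lemma \ref{lemma_red_intersect}(1) gives $|\gamma_1\cap\psi_1|\ge 4$, so the $A$-move $\gamma_1\mapsto\psi_1$ never occurs. Thus $\gamma_1$ is moved to one of $\psi_2,\psi_3,h_1$, and $\psi_1$ is produced from one of $\gamma_2,\gamma_3,f_1$. Lemmas \ref{lem_gamma1_first} and \ref{lem_gamma1_last} further forbid any move of the form $\gamma_1\mapsto\psi_n$ or $\gamma_m\mapsto\psi_1$ from being the first or last move of $\lambda(ij)$; whenever such a move occurs it must be the second or third.

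I would then run the case analysis on the image of $\gamma_1$ and the preimage of $\psi_1$. In the cases where $\gamma_1\mapsto\psi_n$ (with $n\in\{2,3\}$) and some $\gamma_m\mapsto\psi_1$, both moves are interior, hence they occupy the two middle slots, and the first and last moves carry $f_1$ and the remaining $\gamma_\ell$ onto $h_1$ and the remaining $\psi$-curve. Examining the two $4$-holed spheres that host $\gamma_1\mapsto\psi_n$ and $\gamma_m\mapsto\psi_1$ — as in the proofs of Lemmas \ref{lem_gamma1_last} and \ref{lemma_psi_f_distinct}, the hypotheses of Lemmas \ref{lemma_red_intersect} and \ref{lem_experimental_2} pin down that $\gamma_1$ and $\psi_1$ must each bound two punctures here, which together with Remark \ref{remark_combinatorial_property} fixes the local pictures — I expect to extract a complete shadow-arc configuration of the type in Lemma \ref{criterion_destab}, contradicting that $\mc{T}$ is unstabilized. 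The remaining cases, in which $\gamma_1\mapsto h_1$ or $f_1\mapsto\psi_1$ (so that a reducing curve is replaced by a compressing curve for $T_{ij}$, or conversely), are handled by the same device used in the proof of Lemma \ref{lemma_psi_f_distinct}: starting from the bridge discs realizing the reducing curve and applying Lee's reduction theorem \cite{lee2017reduction}, one builds three shadow arcs meeting the stabilization criterion of Lemma \ref{criterion_destab}. In every case $\mc{T}$ turns out to be reducible or stabilized, contrary to hypothesis, so no length-$4$ path exists and $d(p^i_{ij},p^j_{ij})\ge 5$.

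The main obstacle is the combination of bookkeeping and geometry in the case analysis: for each admissible bijection and ordering of the four moves one must draw the relevant $4$-holed spheres, keep careful track of which boundary components are $\gamma_n$- or $\psi_n$-loops and how many punctures each encloses (invoking Remark \ref{remark_combinatorial_property} and Lemma \ref{lem_experimental_2} to discard parity-violating or intersection-two configurations), and then actually produce the destabilizing shadow arcs. The delicate recurring point — exactly as in Lemmas \ref{lemma_red_intersect}, \ref{lem_experimental_2}, \ref{lemma_psi_f_distinct}, and \ref{lem_gamma1_last} — is manufacturing a shadow arc for $T_{jk}$ (or $T_{ik}$) that meets a shadow of a reducing-curve disc in exactly one endpoint and no interior point; this is where Lee's theorem and the interior-disjointness and arc-sliding arguments do the real work.
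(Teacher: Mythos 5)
Your proposal correctly reduces to showing no length-$4$ geodesic exists, and correctly organizes the possibilities into two shapes (exactly one of $f_1,f_2$ moved versus both fixed). You also correctly note that $\gamma_1\mapsto\psi_1$ is ruled out by Lemma \ref{lemma_red_intersect} and that Lemmas \ref{lem_gamma1_first}--\ref{lem_gamma1_last} force any $\gamma_1\mapsto\psi_n$ or $\gamma_m\mapsto\psi_1$ to be an interior edge. There are, however, two substantive problems.

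First, in the first shape your analysis is more roundabout than it needs to be and also incomplete. The clean route (which the paper takes) is: since $\gamma_1$ and $h_1,h_2$ all bound compressing discs for the same tangle $T_{ij}$, the two punctures of $\gamma_1$ lie on one side of each $h_n$, so $|\gamma_1\cap h_n|\equiv 0 \pmod 4$ and $\gamma'_1\neq h_n$; combined with $|\gamma_1\cap\psi_1|\geq 4$ this forces $\gamma'_1$ to be a $\psi_2$- or $\psi_3$-type cut-reducing curve, and then $|\gamma_1\cap\psi_2|=2$ directly contradicts Lemma \ref{lem_experimental_2}(1) (which applies because $\psi_1$ bounds four punctures so both components of $L_j$ are $2$-bridge). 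No ordering of moves, no shadow-arc constructions, and no direct appeal to Lee's theorem are needed at this level. You instead set up a laborious ordering argument and expect to extract destabilizing shadows; that would likely go through eventually, but it duplicates work already buried inside the lemmas and leaves unverified the part where you ``expect to extract'' the destabilization.

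Second, and more seriously, the claim that the second shape ``reduces to'' the first is not justified and, as far as I can see, is simply false. When $f_1$ and $f_2$ are both fixed, one $\gamma_n$-loop must move twice through a pivotal curve $\theta$. If the doubled curve is not $\gamma_1$ you do get the same contradiction as in the first shape (this is the paper's Subcase 2a), but if $\gamma_1$ itself moves twice you enter genuinely new territory: you must track $\theta$ and show $\theta'=\psi_1$, then control the $4$-holed spheres of the intermediate A-moves and the order in which the cut curves move, ultimately producing destabilizing shadows in several sub-configurations (the paper's Subcase 2b(i)/(ii), which occupies roughly two pages with several figures). None of that follows formally from the first-shape argument, because in the first shape $\gamma_1$ moves exactly once, so the key step ``$\gamma'_1$ is a cut curve, hence $|\gamma_1\cap\psi_n|=2$, contradiction'' no longer applies. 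Until you replace the phrase ``reduce the second to it'' with an actual argument for the $\gamma_1$-moves-twice case, the proof has a real hole where most of the work lies.
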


\begin{proof}[Proof of Proposition \ref{prop_longer_proposition}]
By Proposition \ref{lem_new_long_proposition} it is enough to show the distance from $p^i_{ij}$ to $p^j_{ij}$ is not four. By way of contradiction, let $\lambda$ be a geodesic path of length four between such pants decompositions. 
By Lemmas \ref{lem_1} and \ref{lemma_psi_f_distinct}, each $\gamma_n$-curve must move at least once. We have two cases, depending on how many curves of $\{f_1, f_2\}$ are moved. 

\textbf{Case 1:} $\lambda$ moves one curve of $\{f_1, f_2\}$.

Without loss of generality $f_1$ is moved and so $f_2=h_2$ is fixed. In this case, each of $\{\gamma_1, \gamma_2, \gamma_3, f_1\}$ is moved once to one curve among $\{\psi_1, \psi_2, \psi_3, h_1\}$. Denote by $x'$ the image of a loop $x$ under the path $\lambda$; i.e., $x\mapsto x'$ differ by one A-move. 

First observe that, since $h_n$ and $\gamma_1$ are compressing curves for the same tangle, it must happen that if $\gamma_1$ bounds $\{p,q\}$, then they are both on the same side of $h_n$. Thus, $|\gamma_1\cap h_n|\equiv 0$ modulo 4. In particular, $\gamma'_1 \neq h_n$. Similarly $\gamma'_1 \neq \psi_1$. Thus, $\gamma'_1$ bounds a cut disk, say $\gamma'_1=\psi_2$. 
In particular $|\gamma_1 \cap \psi_2|=2$. This is a contradiction to Part 1 of Lemma \ref{lem_experimental_2}. Hence, this case cannot occur. 


\begin{figure}[h]
\labellist \small\hair 2pt 
\pinlabel{(a)} [tl] at 1 186
\pinlabel {$p$} [t] at 45 145
\pinlabel {$q$} [b] at 45 61
\pinlabel {$v$} [b] at 128 151
\pinlabel {$w$} [b] at 165 151
\pinlabel {$u$} [b] at 112 62
\pinlabel {$t$} [b] at 148 62
\pinlabel {$r$} [b] at 188 62
\pinlabel {$s$} [b] at 224 62
\pinlabel {$\gamma_1$} [bl] at 73 156
\pinlabel {$\boundary_2$} [bl] at 169 167
\pinlabel {$x$} [tl] at 158 47
\pinlabel {$y$} [tl] at 208 52
\pinlabel {$\boundary_4$} [tr] at 107 30
\pinlabel {$\gamma'_1 = \psi_2$} [bl] at 191 103

\pinlabel {(b)} [tl] at 287 186
\pinlabel {$r$} [b] at 353 70
\pinlabel {$\gamma_2$} [bl] at 381 173
\pinlabel {$\boundary_1$} [b] at 353 119
\pinlabel {$\boundary_3$} [b] at 454 171
\pinlabel {$\boundary_4$} [tr] at 419 42
\pinlabel {$x$} [b] at 475 51
\pinlabel {$\psi_1 = \gamma'_2$} [bl] at 515 106
\endlabellist
\centering
\includegraphics[width=.9\textwidth]{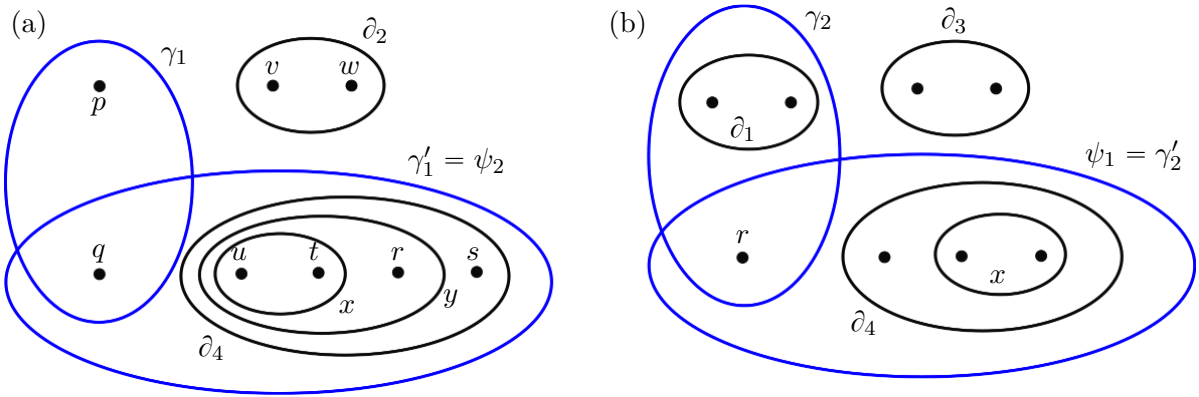}
\caption{How the curves in $\Sigma$ look for specific A-moves.} 
\label{fig_low5_case1}
\end{figure}

\textbf{Case 2:} $\lambda$ fixes $\{f_1, f_2\}$. 

We can write $f_1=h_1$ and $f_2=h_2$. In this case, one of $\{\gamma_1, \gamma_2, \gamma_3\}$ will move twice and the other $\gamma_n$-loops move once along $\lambda$. For the curve $\gamma_j \in \{\gamma_1, \gamma_2, \gamma_3\}$ that moves twice, denote by $\theta$ the curve $\gamma_j'$. We will also refer to $\theta$ as the \textbf{pivotal curve}.

\textbf{Subcase 2a:} $\gamma_1$ moves once along $\lambda$. By Lemma \ref{lemma_red_intersect} $|\gamma_1 \cap \psi_1|\geq 4$ so $\gamma'_1$ must bound a cut disk, say $\gamma'_1=\psi_2$. 
In particular $|\gamma_1 \cap \psi_2|=2$. This is impossible since it contradicts Part 1 of Lemma \ref{lem_experimental_2}.
%

\textbf{Subcase 2b:} $\gamma_1$ moves twice along $\lambda$. 
We will first see that $\gamma'_n\neq \psi_1$ for any $n$. In particular, $\theta'=\psi_1$ and the following property holds: at each vertex of $\lambda$, there are at most three pairwise disjoint curves bounding an even number of punctures. 

By Lemma \ref{lemma_red_intersect}, $\gamma'_1\neq \psi_1$. Suppose, without loss of generality, that $\gamma'_2=\psi_1$. 
The 4-holed sphere corresponding to the A-move $\gamma_2 \mapsto \psi_1$ has one boundary component bounding one puncture, $r$, and boundary loops $\partial_1$, $\partial_3$ and $\partial_4$ bounding two, two and three punctures, respectively (see Figure \ref{fig_low5_case1}(b)). Here, there are four pairwise disjoint curves bounding an even number of punctures: $\{\psi_1, \partial_1, \partial_3, x\}$. Since 
$\gamma_1 \cap \psi_1 \neq \emptyset$ by Lemma \ref{lemma_red_intersect}, we know that $\{f_1, f_2, \theta\}=\{\partial_1, \partial_3, x\}$.
If $\partial_1 = \theta$, then $\gamma_1$ will bound $r$ and one of the two punctures bounded by $\partial_1$. This is impossible since such punctures are on distinct sides of $\psi_1$. Hence $\partial_1 = f_1=h_1$. 

Observe that the two punctures bounded by $\gamma_1$ must be separated by $\theta=\gamma'_1$; if not, then $|\gamma_1 \cap \theta|\equiv 0$ modulo 4 which makes impossible the A-move $\gamma_1\mapsto \theta$. We use this to see that if $\partial_3 = \theta$, then $\gamma_1$ would bound one puncture inside $\partial_3$ with one puncture inside $\partial_4$. These points are in distinct sides of $\psi_1$ (see Figure \ref{fig_low5_case1}(b)) which is a contradiction to Remark \ref{remark_combinatorial_property}. 
Hence, $x=\theta$,  $\partial_3 = f_2$ and $\partial_1=f_1$. Notice that all the incoming A-moves will occur in the side of $\psi_1$ containing $\partial_4$. This forces $p^j_{ij}$ to have at least four curves bounding an even number of punctures, a contradiction to Lemma \ref{lemma_pants_structure}. This concludes that $\gamma'_n\neq \psi_1$, as desired.

By the above, the $\gamma_n$-cut curves move once along $\lambda$ to $\psi_n$ cut curves. Without loss of generality, $\gamma'_n = \psi_n$ for $n=2,3$. \textbf{We will assume that $\gamma_3\mapsto \psi_3$ is not the last A-move in $\lambda$ in the path $\lambda$}; if not, we can relabel the $\gamma_n$ curves. We will focus on the 4-holed sphere corresponding to the A-move $\gamma_3 \mapsto \gamma'_3$ (see Figure \ref{fig_low5_case2b}(a)). We have two cases, depending on the number of punctures bounded by $\partial_2$ and $\partial_3$. 
\begin{figure}[h]
\labellist \small\hair 2pt 
\pinlabel {(a)} at 4 224
\pinlabel {even} at 60 173
\pinlabel {odd} at 60 84
\pinlabel {odd} at 193 173
\pinlabel {even} at 193 84
\pinlabel {$\boundary_1$} [t] at 59 150
\pinlabel {$\boundary_2$} [tl] at 79 62
\pinlabel {$\boundary_3$} [t] at 193 150
\pinlabel {$\boundary_4$} [tr] at 176 65
\pinlabel {$\gamma'_3 = \psi_3$} [t] at 135 37
\pinlabel {$\gamma_3$} [bl] at 102 193

\pinlabel {(b)} at 328 224
\pinlabel {$\boundary_1$} [b] at 382 190
\pinlabel {q} [b] at 365 148
\pinlabel {p} [b] at 400 148
\pinlabel {$\boundary_4$} [tr] at 476 42
\pinlabel {$\gamma_3$} [r] at 335 164
\pinlabel {$\gamma'_3 = \psi_3$} [bl] at 560 146
\pinlabel {$t$} [b] at 385 82
\pinlabel {$x$} [b] at 548 102
\pinlabel {$y$} [b] at 528 43
\pinlabel {$s$} [t] at 469 76
\pinlabel {$u$} [t] at 507 79
\pinlabel {$v$} [t] at 540 82
\pinlabel {$w$} [t] at 560 82
\pinlabel {$r$} [b] at 513 182

\pinlabel {(c)} at 679 224
\pinlabel {$\boundary_1$} [t] at 737 146
\pinlabel {$t$} [b] at 725 170
\pinlabel {$u$} [b] at 754 171
\pinlabel {$r$} [b] at 837 171
\pinlabel {$p$} [b] at 864 171
\pinlabel {$q$} [b] at 894 171
\pinlabel {$x$} [t] at 877 206
\pinlabel {$s$} [b] at 745 79
\pinlabel {$\gamma_3$} [b] at 737 212
\pinlabel {$\gamma'_3 = \psi_3$} [t] at 807 36
\pinlabel {$\boundary_4$} [t] at 873 107
\pinlabel {$\boundary_3$} [br] at 821 195
\endlabellist
\centering
\includegraphics[width=.95\textwidth]{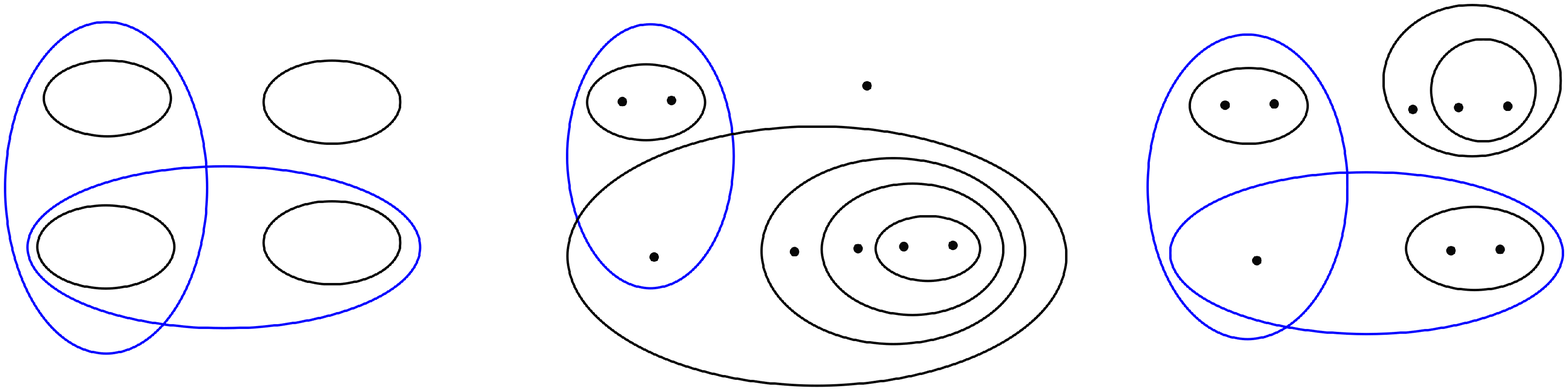}
\caption{The three possibilities occuring in Case 2b.} 
\label{fig_low5_case2b}
\end{figure}

\textbf{Subcase 2b(i):} Both $\partial_2$ and $\partial_3$ bound one puncture each. We adopt the notation in Figure \ref{fig_low5_case2b}(b). In this case, we already have three pairwise disjoint curves bounding an even number of punctures $\{\partial_1, \partial_4, x\}$, so there is a curve $y$ bounding $x$ and one puncture $u$ (see Figure \ref{fig_low5_case2b}(b)). 
Recall that $h_n$ bounds two punctures and $f_n=h_n$ is fixed by $\lambda$. This implies that $\partial_1 = f_1$, $x=f_2$ and $\partial_4\in \{\theta, \psi_1\}$. Now, since $\gamma_3 \mapsto \psi_3$ is not the last A-move in $\lambda$, there are two possible curves which may move next, $y$ and $\theta$. 

Suppose first that $y$ moves before $\theta$ does. (The curve $\theta$ may or may not move). Then $y'$ must be a cut disk and we get $y'=\psi_2$ and $y=\gamma_2$. Using the notation of Figure \ref{fig_low5_case2b}(b), since $\gamma_1$ bounds two punctures and is disjoint from $\gamma_2$ and $\gamma_3$, we obtain that $\gamma_1$ bounds $\{r,s\}$. But $\partial_4$ separates such punctures, so the only option is $\partial_4=\theta$. Now, the fact that $y'$ bounds a cut disk implies that it bounds the two punctures inside $x$ and $s$. The next move $\theta\mapsto \psi_1$ is forced to separate $r$ and $s$, contradicting Remark \ref{remark_combinatorial_property}. 

It remains to study what happens when $\partial_4$ moves before $y$. (The curve $y$ may or may not move). Here, $\partial_4=\theta$. Focusing on Figure \ref{fig_low5_case2b}(b), we observe that $\psi_1=\theta'$ bounds the two punctures inside $x=f_2$, together with $t$ and $u$. By Remark \ref{remark_combinatorial_property}, $\gamma_1$ bounds either $\{r,s\}$ or $\{t,u\}$. The latter is impossible since $\gamma_3$ is disjoint from $\gamma_1$ and $\gamma_3$ separates such punctures. Thus $\gamma_1$ bounds $\{r,s\}$. Since $\psi_3$ separates $r$ and $s$, the A-move $\gamma_1\mapsto \theta$ must appear in $\lambda$ before $\gamma_3\mapsto \psi_3$. Moreover, the move $\gamma_2\mapsto \psi_2=y$ cannot happen between $\gamma_1\mapsto \theta$ and $\gamma_3\mapsto\psi_3$. 
This claim holds because, if $\gamma_2$ moves between $\gamma_1$ and $\gamma_3$, it would force $\gamma_2$ to bound the two punctures inside $x=f_2$ together with $s$, which implies the contradiction $\gamma_1 \cap \gamma_2\neq \emptyset$. We are left with two possibilities, depending on the order of the curves moving: $\left( \gamma_2, \gamma_1, \gamma_3, \theta\right)$ or $\left(\gamma_1, \gamma_3, \theta, \gamma_2\right)$. Figure \ref{fig_low5_case2bi_paths} showcases the two possible paths and what punctures are bounded by each curve. 
\begin{figure}[h]
\centering
\labellist \small\hair 2pt  
\pinlabel{$f_1$} [b] at 45 110
\pinlabel{$r$} [b] at 125 105
\pinlabel{$\psi_3$} [b] at 83 150
\pinlabel{$\gamma_2$} [b] at 10 150
\pinlabel{$\gamma_1$} [b] at 140 80
\pinlabel{$\gamma_2$} [b] at 155 85
\pinlabel{$s$} [b] at 117 70
\pinlabel{$u$} [b] at 133 42
\pinlabel{$f_2$} [b] at 161 35
\pinlabel{$\psi_2$} [b] at 172 21
\pinlabel{$\theta$} [b] at 90 21
\pinlabel{$t$} [b] at 50 60
\pinlabel{$\psi_1$} [b] at 50 10

\pinlabel{$f_1$} [b] at 303 110
\pinlabel{$r$} [b] at 385 105
\pinlabel{$\gamma_1$} [b] at 360 110
\pinlabel{$\psi_3$} [b] at 390 140
\pinlabel{$\theta$} [b] at 400 75
\pinlabel{$\gamma_2$} [b] at 420 93
\pinlabel{$f_2$} [b] at 445 85
\pinlabel{$u$} [b] at 388 43
\pinlabel{$s$} [b] at 359 73
\pinlabel{$\psi_2$} [b] at 320 10
\pinlabel{$\psi_1$} [b] at 290 5
\pinlabel{$\gamma_3$} [b] at 260 40
\pinlabel{$t$} [b] at 306 45

\endlabellist
\includegraphics[width=.8\textwidth]{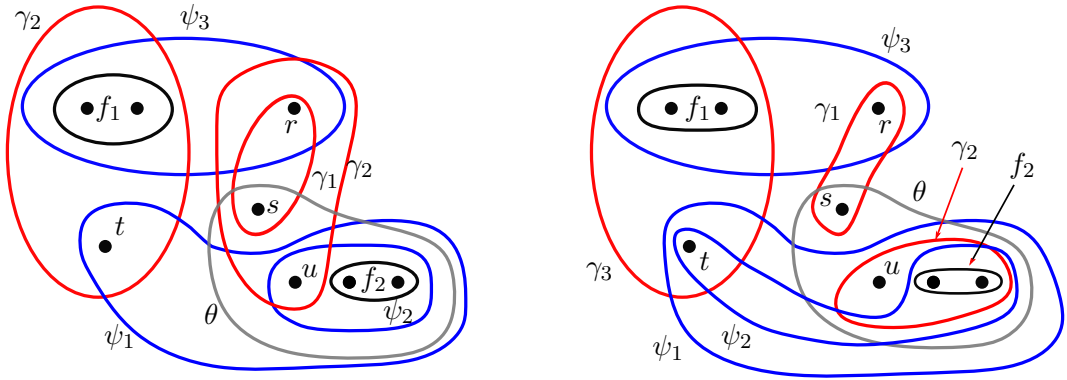}
\caption{Two paths.} 
\label{fig_low5_case2bi_paths}
\end{figure}

We focus on the sub path of $\lambda$ corresponding to the consecutive A-moves $\gamma_1 \mapsto \theta$ followed by $\gamma_3 \mapsto \psi_3$. The second A-move occurs inside a 4-holes sphere with boundaries associated to $t$, $r$, $f_1$ and $\theta$ (see Figure \ref{fig_low5_case2bi_end}(a)). The fact that $\gamma_1$ and $\gamma_3$ are disjoint implies that the condition $|\gamma_3\cap \psi_3|=2$ is equivalent to $|\gamma_1\cap \psi_3|=2$. One can see this claim by noticing that the curves $\gamma_3$ and $\partial \eta(\gamma_1\cup \theta)$ are isotopic in the 4-holed sphere. 
The condition $|\gamma_1 \cap \psi_3|=2$ contradicts the statement of Lemma \ref{lem_experimental_2}. 
In other words, subcase 2b(i) is impossible. 
\begin{figure}[h]
\centering
\labellist \small\hair 2pt 
\pinlabel {(a)} at 5 190
\pinlabel {$f_1$} [b] at 68 132
\pinlabel {$q$} [t] at 57 115
\pinlabel {$p$} [t] at 80 115
\pinlabel {$t$} [t] at 66 55
\pinlabel {$\gamma_3$} [tl] at 71 14
\pinlabel {$\gamma_1$} at 130 130
\pinlabel {$r$} [t] at 126 114
\pinlabel {$s$} [b] at 126 57
\pinlabel {$u$} [b] at 147 57
\pinlabel {$f_2$} [t] at 182 42
\pinlabel {$\theta$} [tr] at 118 24
\pinlabel {$\psi_3$} [b] at 115 174

\pinlabel {(b)} at 228 190
\pinlabel {$t$} [b] at 252 119
\pinlabel {$u$} [b] at 274 119
\pinlabel {$\boundary_1$} [b] at 262 146
\pinlabel {$\psi_2 = \boundary'_3$} [b] at 269 170
\pinlabel {$\boundary_q = f_1$} [t] at 327 43
\pinlabel {$\psi_3$} [l] at 369 44
\pinlabel {$\gamma_1$} [bl] at 321 145
\pinlabel {$x = \theta$} [bl] at 353 131
\pinlabel {$r$} [b] at 304 119
\pinlabel {$p$} [b] at 335 117
\pinlabel {$q$} [b] at 357 117
\pinlabel {$s$} [t] at 261 55
\pinlabel {$\gamma_2 = \boundary_3$} [b] at 341 176

\pinlabel {(c)} at 426 190
\pinlabel {$t$} [t] at 447 114
\pinlabel {$u$} [t] at 469 114
\pinlabel {$\gamma_3$} [bl] at 469 143
\pinlabel {$\gamma_1$} [r] at 437 87
\pinlabel {$\theta$} [b] at 457 132
\pinlabel {$r$} [tl] at 507 114
\pinlabel {$p$} [t] at 530 114
\pinlabel {$q$} [t] at 553 114
\pinlabel {$s$} [b] at 447 58
\pinlabel {$f_1 = \boundary_4$} [tr] at 512 30
\pinlabel {$\boundary_3$} [b] at 535 138
\pinlabel {$\psi_3$} [l] at 559 62
\endlabellist
\includegraphics[width=1\textwidth]{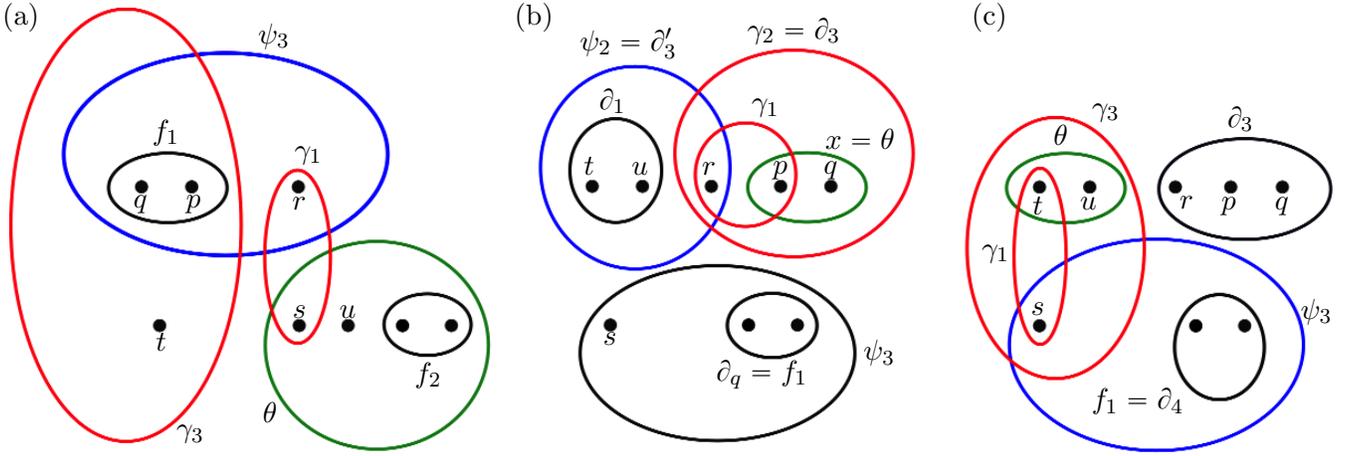}
\caption{Curves interacting in the consecutive A-moves $\gamma_1 \mapsto \theta$, $\gamma_n \mapsto \psi_n$ for a fixed $n$.} 
\label{fig_low5_case2bi_end}
\end{figure}

\textbf{Subcase 2b(ii):} Only one of $\{\partial_2, \partial_3\}$ bounds one puncture. Without loss of generality, $\partial_2$ bounds one puncture and $\partial_3$ three. This forces the setup in Figure \ref{fig_low5_case2b}(c). 
The curves along the path $\lambda$ bounding an even number of punctures are $\gamma_1$, $\psi_1$, $f_1=h_1$, $f_2=h_2$ and (possibly) $\theta$. 
But we have seen that $\theta'=\psi_1$ and $\gamma'_1 = \theta$. This implies that $\partial_4 \not \in \{\gamma_1, \theta, \psi_1\}$ since all the A-moves starting at $\partial_4$ will be forced to end at curves bounding two punctures. 
Thus we may assume that $\partial_4=f_1$. Since no curve at this moment bounds four punctures, there should be another A-move after $\gamma_3 \mapsto \psi_3$. Using the notation in Figure \ref{fig_low5_case2b}(c), the curves that might move are $\{\partial_1, \partial_3, x\}$. 

Suppose that $\partial_3$ moves first, then $\partial_3=\gamma_2$ and $\partial'_3=\psi_2$. Since $\psi_2$ bounds three punctures then $\partial'_3$ must enclose $\partial_1$ and the puncture $r$ together. Since $\psi_1$ separates the cut curves $\psi_2$ and $\psi_3$ (Figure \ref{fig_gamma1_first}), it follows that $\partial_1=f_2$ and $\psi_1$ separates $p$ and $q$. Thus, from Remark \ref{remark_combinatorial_property}, we must have $x=\theta$. Without loss of generality, $\gamma_1$ encloses $r$ and $p$ (see Figure \ref{fig_low5_case2b}(c)). 
We now focus in the consecutive A-moves $\gamma_1 \mapsto \theta$, $\partial_3=\gamma_2 \mapsto \psi_2$. Observe that $\gamma_2 \mapsto \psi_2$ occurs in a 4-holed sphere with boundaries corresponding to $\psi_3$, $r$, $\partial_1=f_2$ and $\theta$. 
This local setup in depicted in Figure \ref{fig_low5_case2bi_end}(b). 
In here, the conditions $\gamma_1\cap \gamma_2=\emptyset$ and $|\gamma_2 \cap \psi_2|=2$ force $|\gamma_1\cap \psi_2|=2$. This contradicts the statement of Lemma \ref{lem_experimental_2}. 

If $x$ moves before $\partial_1$ and $\partial_3$, then $\partial_1=f_2$. In particular, $x=\gamma_1$ and $\partial_3$ must move so that $\theta'=\psi_1$ can bound four punctures. We can then redefine $x$ to be $\gamma'_1=\theta$ and proceed as if $\partial_3$ moves first (paragraph above). We get then a contradiction. 

The last case to check is when $\partial_1$ moves before $\partial_3$ and $x$. In particular $x=f_2$ and $\partial_1\in \{\gamma_1, \theta\}$. 

First we see that that if $\partial_1=\gamma_1$, then $\partial_3$ will have to move between $\gamma_1\mapsto \theta$ and $\theta\mapsto \psi_1$. This is true because, if $\partial_3 $ doesn't move immediately after, then $(\gamma'_1)'=\psi_1$ would separate $t$ and $u$, contradicting Remark \ref{remark_combinatorial_property}. 
In particular $\partial_3=\gamma_2$ must move between $\gamma_1$ and $\theta$. Moreover, the A-move $\gamma_2 \mapsto \psi_1$ occurs in a 4-holed sphere with boundaries corresponding two $\theta$, $\partial_1=f_2$ and two boundaries bounding one puncture each. If we switch the labels and redefine $\gamma_2$ to be $\gamma_3$, we get the situation of Subcase 2b(i). We can then obtain a contradiction. 

Therefore, we must have $\partial_1 = \theta$. 
Since $\gamma_1$ is disjoint from $\gamma_3$, using the notation in Figure \ref{fig_low5_case2b}(c), we can assume that $\gamma_1$ bounds $t$ and $s$. We obtain the sub path of $\lambda$, depicted in Figure \ref{fig_low5_case2bi_end}(c), given by the consecutive A-moves $\gamma_1 \mapsto \theta$, $\gamma_3 \mapsto \psi_3$. Observe that $\gamma_3\mapsto \psi_3$ occurs in a 4-holed sphere with boundaries corresponding to $s$, $\partial_3$, $f_1$ and $\theta$. 
In here, the conditions $\gamma_3\cap \gamma_1=\emptyset$ and $|\gamma_3\cap \psi_3|=2$ force $|\gamma_1 \cap \psi_3|=2$, contradicting Lemma \ref{lem_experimental_2}. 
Hence, Subcase 2b(ii) cannot occur. We have exhausted all the possibilities, thus concluding the proof of the Proposition. 
\end{proof}

\begin{proposition}\label{prop_five_22}
Suppose that both $\gamma_1$ and $\psi_1$ bound two punctures each. Then any path $\lambda(ij)$ from $p_{ij}^{i}$ to $p_{ij}^{j}$ must be of distance at least five. 
\end{proposition}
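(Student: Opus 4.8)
The plan is to follow the template of Propositions \ref{lem_new_long_proposition} and \ref{prop_longer_proposition}. By Proposition \ref{lem_new_long_proposition} the distance $d(p^i_{ij},p^j_{ij})$ is at least $4$, so it suffices to rule out a geodesic $\lambda$ of length exactly $4$. Two facts will be used throughout. First, under the standing hypothesis that both $\gamma_1$ and $\psi_1$ bound two punctures, Lemma \ref{lem_combinatorics} forces both $L_i$ and $L_j$ to have a one-bridge (hence also a three-bridge) component; by Remark \ref{remark_combinatorial_property} the pairs of punctures enclosed by $\gamma_1$ and by $\psi_1$ are then either equal or disjoint, and equality would make $\mc{T}$ reducible by Lemma \ref{lem_1}, so $\gamma_1$ and $\psi_1$ enclose disjoint pairs and may be isotoped disjoint. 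Second, no curve of $p^i_{ij}$ encloses exactly the two punctures bounded by $\psi_1$: such a curve would be isotopic to $\psi_1$, contradicting Lemma \ref{lemma_psi_f_distinct} (the only candidates in $p^i_{ij}$ are $f_1,f_2$).

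As in Proposition \ref{prop_longer_proposition}, Lemmas \ref{lem_1} and \ref{lemma_psi_f_distinct} force every $\gamma_n$-loop to move at least once, so a length-$4$ geodesic either moves exactly one of $f_1,f_2$ — in which case each $\gamma_n$ moves exactly once (Case 1) — or fixes both $f_1,f_2$, in which case one $\gamma_n$-loop moves twice and the other two move once each (Case 2). The key claim, which handles Case 1 and the part of Case 2 in which $\gamma_1$ moves only once, is that $\gamma_1$ cannot be the initial curve of a single A-move landing in $p^j_{ij}$. Indeed, $\gamma_1$ and any $h_m$ both bound compressing disks for $T_{ij}$, so the two punctures bounded by $\gamma_1$ lie on one side of $h_m$ and $\gamma_1$ can be isotoped off $h_m$; $\gamma_1$ and $\psi_1$ are disjoint by the first recorded fact; and if $\gamma_1$ were moved to a cut curve $\psi_2$ (or $\psi_3$), then $|\gamma_1\cap\psi_2|=2$, so part (2) of Lemma \ref{lem_experimental_2} applies and forces $\psi_2$ to enclose the two punctures of $\psi_1$ together with exactly one of the two punctures of $\gamma_1$. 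Examining the four-holed sphere in which this A-move occurs then forces one of its boundary curves — necessarily a curve of $p^i_{ij}$ — to enclose exactly the pair bounded by $\psi_1$, contradicting the second recorded fact. This disposes of Case 1 and of Case 2 whenever $\gamma_1$ moves once.

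The remaining possibility is Case 2 with $\gamma_1$ pivotal. I would first show, as in Proposition \ref{prop_longer_proposition}, that neither singly-moved cut curve $\gamma_2,\gamma_3$ can be sent to $\psi_1$: if $\gamma_2\mapsto\psi_1$ then $|\gamma_2\cap\psi_1|=2$, and part (2) of Lemma \ref{lem_experimental_2} with the roles of $L_i$ and $L_j$ reversed would make $\gamma_2$ enclose the pair bounded by $\gamma_1$ together with one more puncture; but the pair bounded by $\psi_1$ is disjoint from all of these, giving $|\gamma_2\cap\psi_1|=0$, a contradiction. Hence the pivotal curve ends at $\psi_1$: writing $\gamma_1\mapsto\theta\mapsto\psi_1$ we get $\{\gamma'_2,\gamma'_3\}=\{\psi_2,\psi_3\}$ with $f_1=h_1$ and $f_2=h_2$. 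From here I would mimic the bookkeeping of Proposition \ref{prop_longer_proposition}, Subcase 2b: the ordering restrictions of Lemmas \ref{lem_gamma1_first} and \ref{lem_gamma1_last} say that the first edge of $\lambda$ is neither of the form $\gamma_1\mapsto\psi_n$ nor $\gamma_n\mapsto\psi_1$, and then a case split on the four-holed spheres of the A-moves $\gamma_1\mapsto\theta$, $\theta\mapsto\psi_1$, $\gamma_2\mapsto\psi_2$, $\gamma_3\mapsto\psi_3$ — organised by how many punctures each boundary component encloses — should close every branch either by a combinatorial contradiction to Remark \ref{remark_combinatorial_property} or Lemma \ref{lem_experimental_2}, or by producing a triple of shadow arcs exhibiting $\mc{T}$ as stabilized via Lemma \ref{criterion_destab}.

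The main obstacle is precisely this last subcase. Because both $L_i$ and $L_j$ carry one-bridge components, Lemma \ref{lemma_red_intersect} and part (1) of Lemma \ref{lem_experimental_2} are unavailable, so only part (2) of Lemma \ref{lem_experimental_2} and the combinatorial Remark \ref{remark_combinatorial_property} are in play; consequently the tree of sub-configurations — both the arrangements of the relevant four-holed spheres and the relative order of the four A-moves — is longer and more delicate than in Proposition \ref{prop_longer_proposition}, and I expect this step to occupy the bulk of the argument, with Lemma \ref{criterion_destab} invoked repeatedly to kill the surviving branches.
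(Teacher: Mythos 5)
Your framing --- reduce to showing the distance is not $4$, then split on whether $f_1,f_2$ move --- matches the paper, and your two recorded facts (that $\gamma_1$ and $\psi_1$ enclose disjoint pairs, and that no curve of $p^i_{ij}$ encloses exactly the $\psi_1$-pair) are correct, as is the observation that $\gamma_1$ is disjoint from $\psi_1,h_1,h_2$. However, the ``key claim'' argument has a gap. From Lemma~\ref{lem_experimental_2}(2) you correctly deduce that if the A-move $\gamma_1\mapsto\psi_2$ occurs inside a four-holed sphere $E$, then one boundary curve of $E$ must enclose exactly the pair bounded by $\psi_1$. You then assert that this curve is ``necessarily a curve of $p^i_{ij}$,'' but Lemmas~\ref{lem_gamma1_first} and~\ref{lem_gamma1_last} only place the edge $\gamma_1\mapsto\psi_2$ at position $2$ or $3$ of $\lambda$. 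In position $3$ the preceding moves may already have introduced $\psi_1$ (via $\gamma_2\mapsto\psi_1$ in Case 1, or via $\theta\mapsto\psi_1$ in Case 2), in which case the offending boundary curve of $E$ is simply $\psi_1$ itself and your second recorded fact is not contradicted. This is exactly the configuration the paper analyses at length in Subcase 1(b) and Subcase 2a; your shortcut handles only the position-$2$ situation.

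There is a second gap at the start of your discussion of Subcase 2b. To rule out $\gamma_2\mapsto\psi_1$ you invoke Lemma~\ref{lem_experimental_2}(2) with the roles of $L_i$ and $L_j$ reversed to conclude that $\gamma_2$ encloses the $\gamma_1$-pair plus one more puncture $x$, and you then assert that the $\psi_1$-pair is disjoint from these three punctures, forcing $|\gamma_2\cap\psi_1|=0$. That disjointness is unjustified and is false in general: $|\gamma_2\cap\psi_1|=2$ forces $\gamma_2$ to separate the two $\psi_1$-punctures, so one of them lies inside $\gamma_2$; since it is not a $\gamma_1$-puncture, it must be $x$, and no contradiction arises. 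The paper instead eliminates $\gamma_n\mapsto\psi_1$ by reversing $\lambda$ and reducing to Subcase 2a, which you have not established. Finally, you are up front that the branch $\gamma_1\mapsto\theta\mapsto\psi_1$ is left unfinished; in the paper this branch, together with Subcase 2a, is where the bulk of the work lives, so as written the proposal is a plan rather than a proof.
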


\begin{proof}[Proof of Proposition \ref{prop_five_22}]
This proof follows the same path as Proposition \ref{prop_longer_proposition}. By Proposition \ref{lem_new_long_proposition}, it is enough to show the distance from $p^i_{ij}$ to $p^j_{ij}$ is not four. By way of contradiction, let $\lambda$ be a geodesic path of length four between such pants decompositions. 
By Lemmas \ref{lem_1} and \ref{lemma_psi_f_distinct}, each $\gamma_n$-curve must move at least once. We have two cases, depending on how many curves of $\{f_1, f_2\}$ are moved. 

\textbf{Case 1:} $\lambda$ moves one curve of $\{f_1, f_2\}$.
Without loss of generality, assume $f_2=h_2$ is fixed. Observe that, since $\psi_1$ and $\gamma_1$ bound two punctures and the curves $\psi_1$, $\gamma_1$, $h_1$, $f_1$ are compressing curves for the same tangle, we obtain that $\gamma'_1 \neq h_1, \psi_1$ and $\psi_1 \neq f'_1$. Thus, we can assume that $\gamma'_1 = \psi_2$ and $\gamma'_2 = \psi_1$. By Lemmas \ref{lem_gamma1_first} and \ref{lem_gamma1_last}, the A-moves $\gamma_1 \mapsto \psi_2$ and $\gamma_2 \mapsto \psi_1$ cannot be first nor last in $\lambda$.

\textbf{Subcase 1(a):} $\gamma_1 \mapsto \psi_2$ is second. In particular, $\gamma_2 \mapsto \psi_1$ is third, and there are at most three curves bounding an even number of punctures after the second A-move:  $\{f_1,h_1,f_2=h_2\}$. We focus our attention to the 4-holed sphere corresponding to $\gamma_1 \mapsto \psi_2$. By the previous sentence, we are forced to have an arrangement of curves as in Figure  
\ref{fig_low5_case1}(a) (compare with Figure \ref{fig_low5_22_case2}).
In particular, $\{x,\partial_2, \partial_4\}=\{f_1, h_1, f_2\}$ and $y=\gamma_2$. Since $\psi_1$ is the next curve to appear, $\psi_1$ must bound $\{r,s\}$. 
This is already a contradiction since Part 2 of Lemma \ref{lem_experimental_2} implies that $\psi_1$ bounds two of the three punctures $\{p,v,w\}$. 
This subcase is impossible. 

\textbf{Subcase 1(b):} $\gamma_1 \mapsto \psi_2$ is third and $\gamma_2 \mapsto \psi_1$ is second in $\lambda$. Recall that the only curves bounding an even number of punctures are $\{\gamma_1, \psi_1, f_1, h_1, f_2=h_2\}$. We need to decide which of the A-moves $\gamma_3 \mapsto h_1$ and $f_1 \mapsto \psi_3$ is first. 
For us to decide, focus on the 4-holed sphere corresponding to the A-move $\gamma_1 \mapsto \psi_2$. Counting $\gamma_1$, there are four or five pairwise disjoint curves bounding an even number of punctures before $\gamma_1$ moved (See Figure \ref{fig_low5_22_case2}). 
But every A-move in $\lambda$ interchanges cut and compressing curves, so the number of even curves after the second A-move will be three or five. 
Thus, $\gamma_3$ moves first, $f_1$ at last and the curves look like in Figure \ref{fig_low5_22_case2}(b). 
Part 2 of Lemma \ref{lem_experimental_2} implies that $\partial_2 = \psi_1$. 
Since $\gamma_2\mapsto \psi_1$ occurs in second place, we can assume that $\gamma_2$ bounds $\{p,q,v\}$. 

We will focus on $\partial_4$. First observe that if $\partial_4=f_2=h_2$, then the A-moves in distinct sides of $\partial_4$ commute. This would let us to contradict Lemma \ref{lem_gamma1_last} since we could make $\gamma_2 \mapsto \psi_1$ the first A-move. 
Suppose now $\partial_4=f_1$. Since $f_1$ is the last curve to move, we can assume that $f'_1=\psi_3$ bounds $\{q,u,t\}$. Moreover, because $|\gamma_1\cap\psi_2|=|\partial_4 \cap \psi_3|=2$ and $\psi_3$ is disjoint from $x$, $z$, and $\psi_2$, we can see that $\gamma_1$ and $\psi_3$ must intersect in two points. Now, we know that $x= h_a$ for some $a\in \{1,2\}$. 
We can use the dual curve $h'_a\in p_{jk}^j$ to find a tuple $(c, c')$ of destabilization shadows as in Lemma \ref{criterion_destab}. Thus, $\partial_4=h_1$ is the remaining option. 

If $\partial_4=h_1$, then we can assume that $\gamma_3$ bounds $\{r,s,w\}$ because $\gamma_3\mapsto h_1$ is the first A-move in $\lambda$. Recall that $\gamma_2$ bounds $\{p,q,v\}$. By thinking in the 4-holed sphere with boundaries $\gamma_3$, $\partial_2$, $z$ and $x$, the conditions $|\partial_4 \cap \gamma_3|= |\partial_2 \cap \gamma_2|=2$ and $\partial_4 \cap \partial_2 = \emptyset$ imply that $\gamma_2$ intersects $\partial_2=\psi_1$ in two points. Now, we know that $z=f_a$ for some $a\in \{1,2\}$. We can use the dual curve $f'_a\in p_{ik}^i$ to find a pair of shadows $(c,c')$ as in Lemma \ref{criterion_destab}. We have concluded Case 1. 

\textbf{Case 2:} $\lambda$ fixes $\{f_1,f_2\}$. 

In this case, one $\gamma_n$-curve moves twice and the rest exactly once. We write $f_a=h_a$ and denote by $\theta$ the pivotal curve. There are two subcases depending on how many times $\gamma_1$ moves.
\begin{figure}[h]
\centering
\labellist \small\hair 2pt
\pinlabel {(a)} at -15 140
\pinlabel {$q$} at 30 40
\pinlabel {$p$} at 20 120
\pinlabel {$\gamma_1$} at -1 120
\pinlabel {$\gamma'_1 = \psi_2$} at 235 10
\pinlabel {$\partial_4$} at 215 115
\pinlabel {$y$} at 185 125
\pinlabel {$\partial_2$} at 155 140
\pinlabel {$x$} at 120 70
\pinlabel {$r$} at 145 60
\pinlabel {$t$} at 100 60
\pinlabel {$u$} at 75 60
\pinlabel {$s$} at 180 60
\pinlabel {$v$} at 105 120
\pinlabel {$w$} at 130 120

\pinlabel {(b)} at 280 140
\pinlabel {$q$} at 315 40
\pinlabel {$p$} at 315 120
\pinlabel {$x$} at 400 75
\pinlabel {$z$} at 450 77
\pinlabel {$r$} at 435 60
\pinlabel {$t$} at 395 60
\pinlabel {$u$} at 370 60
\pinlabel {$s$} at 460 60
\pinlabel {$v$} at 400 120
\pinlabel {$w$} at 425 120
\pinlabel {$\partial_4$} at 510 120
\pinlabel {$\gamma'_1 = \psi_2$} at 530 10
\pinlabel {$\partial_2$} at 460 120
\pinlabel {$\gamma_1$} at 295 120

\endlabellist
\includegraphics[width=.75\textwidth]{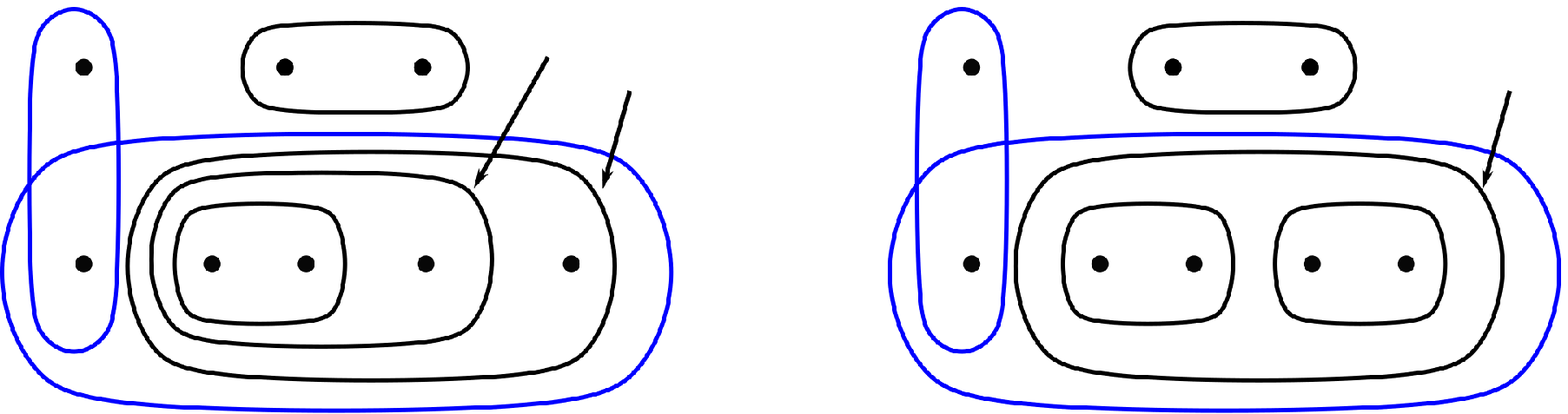}
\caption{When $\gamma_1$ and $\psi_2$ differ by one A-move, there are either (a) three or (b) four curves disjoint from $\gamma_1$ bounding an even number of punctures.} 
\label{fig_low5_22_case2}
\end{figure}

\textbf{Subcase 2a:} $\gamma_1$ moves once along $\lambda$. Recall that $\gamma_1$, $\psi_1$, $f_1=h_1$, and $f_2=h_2$ bound compressing disks in $T_{ij}$ and $\gamma_1$, $\psi_1$ bound two punctures. Thus, $|\gamma_n\cap \alpha|$ and $|\psi_1 \cap \alpha|$ are both divisible by four for all $\alpha \in \{\gamma_1, \psi_1, f_1=h_1, f_2=h_2\}$. This implies that $\gamma'_1$ must bound a cut disk, say $\gamma'_1=\psi_2$. Lemmas \ref{lem_gamma1_first} and \ref{lem_gamma1_last} force $\gamma_1$ to move second or third in $\lambda$. 
We can represent the curves in 4-holed sphere corresponding to $\gamma_1 \mapsto \psi_2$ like in Figure \ref{fig_low5_22_case2}. Observe that, before the A-move of $\gamma_1$, there are either four or five pairwise disjoint curves bounding an even number of punctures. 

We first study $\partial_2$ in Figure \ref{fig_low5_22_case2}. Since $\partial_2$ bounds two punctures, we have $\partial_2 \in \{f_1=h_1, f_2=h_2, \psi_1, \theta\}$. 
Notice that $\partial_2$ cannot be $\theta$. If that were the case, $\theta'$ would be forced to bound an even number of punctures, say $\{p,v\}$, and $\theta'=\psi_1$. In particular, $\psi_1$ separates $p$ and $q$ which contradicts Remark \ref{remark_combinatorial_property}. Lemma \ref{lem_experimental_2} implies that $\psi_1$ bounds two punctures from $\{p,v,w\}$, thus $\partial_2 = \psi_1$.

\textbf{Subcase 2a(i):} Suppose first that there are five even curves. We use the notation in Figure \ref{fig_low5_22_case2}(b). We have that the sets of curves $\{x,z,\partial_4\}$ and $\{\theta, f_1, f_2\}$ agree. In particular, by Lemma \ref{lem_gamma1_last} $\gamma_2\mapsto\psi_1$ must be the second A-move and so $\gamma_3 \mapsto \theta$ is the first one. 
If $\partial_4$ is equal to some $f_a$, then the curves $\theta$ and $\psi_1$ will lie in different sides of $\partial_4$. We could then permute their corresponding A-moves and obtain $\gamma_2 \mapsto\psi_1$ first in $\lambda$, contradicting Lemma \ref{lem_gamma1_last}. Thus we conclude that 
$\partial_4=\theta$, $x=f_1=h_1$, and $z=f_2=h_2$. Here, we can assume that $\gamma_2$ bounds $\{p,q,v\}$ and $\gamma_3$ bounds $\{w,r,s\}$. 
Now, by looking at the 4-holed sphere bounded by $\gamma_2$, $x$, $z$ and $\partial \eta(w)$, we can see that $\gamma_3 \cap \gamma_2 = \emptyset$ and $|\gamma_2 \cap \psi_1|=2$ imply that $|\gamma_3\cap \psi_1|=2$. Then, inside the component of $\Sigma\setminus \gamma_3$ containing $w$, we can use $f'_2$ to find a tuple of shadows $(c,c')$ satisfying the conditions of Lemma \ref{criterion_destab}. Thus, this subcase cannot occur. 

\textbf{Subcase 2a(ii):} Before the A-move $\gamma_1 \mapsto \psi_2$, there are four curves bounding even number of punctures. We can draw the curves in $\Sigma$ as in Figure \ref{fig_low5_22_case2}(a). 
Since $\partial_2 = \psi_1$, we can assume $x=f_1=h_1$ and $\partial_4 = f_2= h_2$. 
Now, since $\partial_4$ is fixed along $\lambda$, the A-moves occurring in different sides of $\partial_4$ can be permuted. Thus, we can assume that $y=\gamma_2$ and so $\gamma'_2 \in \{\psi_3, \theta\}$.

Suppose now that $\gamma'_2=\psi_3$. Since $\psi_3=\gamma'_2$ is forced to bound $\{u,t,s\}$, we can assume that $h'_1\in p^j_{jk}$ bounds $\{t,s\}$. In particular, $T_{jk}$ connects the punctures $\{t,s\}$. On the other hand, since $\gamma_1$ $f_1$, and $f_2$ bound disks in $T_{ij}$, we know that $T_{ij}$ connects $p$, $u$ and $r$ with $q$, $t$, and $s$, respectively. The fact that $L_j=T_{ij}\cup \T_{jk}$ is a 2-component link and $\psi_1$ is a reducing curve implies that $T_{jk}$ connects the punctures $\{u,r\}$ with $\{p,q\}$. 
Since $\gamma_2$ bounds a cut-disk for $T_{ik}$, we have that $T_{ik}$ must connect $r$ with either $u$ or $t$. In any case, the fact that $L_k=T_{ik}\cup \T_{jk}$ is a 2-component link forces $v$ and $w$ to be connected by $T_{ik}$. 
Since $\psi_1$ bounds a compressing disk in both $T_{ij}$ and $T_{jk}$, we obtain that $v$ and $w$ are connected by the three tangles. This implies the surface $S$ is disconnected, a contradiction.

We are left with $\gamma'_2=\theta$ which forces $\gamma'_3=\psi_1=\partial_2$ and $\theta'=\psi_3$. 
Since $\partial_4=f_2=h_2$ is fixed along $\lambda$, the A-moves on distinct sides of $\partial_4$ commute. Thus, we can take $\lambda$ so that $\gamma_3 \mapsto \psi_1$ is the first A-move. This contradicts the conclusion of Lemma \ref{lem_gamma1_last}. Hence, this subcase cannot occur. 

\textbf{Subcase 2b:} $\gamma_1$ moves twice along $\lambda$. 
By symmetry and Subcase 2a, it is enough to study the case that $\theta'=\psi_1$. We write $\gamma'_2=\psi_2$ and $\gamma'_3=\psi_3$. First observe that, since $\gamma_1$ and $\psi_1$ bound disjoint sets of two punctures (Lemma \ref{lem_combinatorics}), the A-moves $\gamma_1\mapsto \theta$ and $\theta\mapsto \psi_1$ cannot be consecutive in $\lambda$. In other words, at least one cut-curve must move between those moves. We are left with two options (up to symmetry) for the order of the A-moves along $\lambda$: $(\gamma_1, \gamma_3, \gamma_2, \theta)$ and $(\gamma_1, \gamma_3, \theta, \gamma_2)$. 
We focus on the \textbf{second} A-move $\gamma_3 \mapsto \psi_3$. It occurs inside a 4-holed sphere depicted in Figure \ref{fig_low5_case2b}(a). 

\textbf{Subcase 2b(i):} Both $\partial_2$ and $\partial_3$ bound one puncture each. We use the notation in Figure \ref{fig_low5_case2b}(b) and observe that $y=\gamma_2$. 
Since $\gamma_1 \mapsto \theta$ and $\gamma_3 \mapsto \psi_3$ are the first two A-moves in $\lambda$, we know that the sets of curves $\{x,\partial_1, \partial_4\}$ and $\{\theta, f_1=h_1, f_2=h_2\}$ agree. 
Suppose $\partial_4 = \theta$, then $\gamma_1$ is forced to bound $\{r,s\}$. 
In the 4-holed sphere with boundaries $\partial_1$, $y$, $\partial\eta(r)$ and $\partial \eta(t)$, the conditions $\gamma_3 \cap \gamma_1=\emptyset$ and $|\gamma_3\cap \psi_3|=2$ force $|\gamma_1\cap \psi_3|=2$. Lemma \ref{lem_experimental_2} implies that $\psi_1$ bounds two punctures from $\{q,p,r\}$. This is impossible since $\partial_1 \in \{h_1, h_2\}$ is disjoint from $\psi_1$. 
Thus we conclude that $\partial_4= f_2=h_2$. 

Suppose now that $\partial_1 = f_1 = h_1$. 
Since the A-move $\gamma_1 \mapsto \theta$ occurs inside $\gamma_2$, we can reuse Figure \ref{fig_low5_case2b}(b) and assume that $x=\gamma_1$ and $\theta=\gamma'_1$ bounds $\{u,v\}$. 
After $\gamma'_1\mapsto \theta$, the next A-move has to be $\gamma_2\mapsto \psi_2$. Here, $\psi_2$ and $\psi_1=\theta'$ will bound $\{s,u,v\}$ and $\{s,u\}$, respectively.
Focus on the 4-holed sphere $E$ corresponding to the A-move $\theta\mapsto \psi_1$. Notice that $E$ has boundaries corresponding to $s$, $u$, $v$ and $\psi_2$. Since $|\gamma_2 \cap \psi_2|=2$, the intersection $\gamma_2\cap E$ is an arc with both endpoints on $\psi_2$ that separates $s$ from $\{u,v\}$ (see Figure \ref{fig_low_22_case2bi}(a)). Since $\theta\cap \gamma_2 =\emptyset$, the condition $|\psi_1 \cap \theta|=2$ forces $\psi_1$ to intersect $\gamma_2$ in two points. 
\begin{figure}[h]
\centering
\labellist \small\hair 2pt
\pinlabel {(a)} at -20 200
\pinlabel {$s$} at 53 100
\pinlabel {$u$} at 105 100
\pinlabel {$v$} at 155 100
\pinlabel {$w$} at 207 100
\pinlabel {$\theta$} at 145 130
\pinlabel {$\psi_1$} at 55 185
\pinlabel {$\partial_4=f_1=h_1$} at 180 170
\pinlabel {$\psi_2$} at 200 35
\pinlabel {$\gamma_2$} at 210 140

\pinlabel {(b)} at 270 200
\pinlabel {$q$} at 355 150
\pinlabel {$p$} at 415 150
\pinlabel {$t$} at 350 95
\pinlabel {$s$} at 415 50
\pinlabel {$u$} at 490 50
\pinlabel {$r$} at 515 150
\pinlabel {$v$} at 535 50
\pinlabel {$w$} at 580 50
\pinlabel {$f'_2$} at 305 50
\pinlabel {$c'$} at 495 130
\pinlabel {$\partial_1=f_1$} at 390 180
\pinlabel {$\gamma_3$} at 350 50
\pinlabel {$f'_1$} at 380 120
\pinlabel {$c$} at 390 5
\pinlabel {$\partial_4=f_2$} at 600 110
\pinlabel {$\gamma_2$} at 580 85
\pinlabel {$\psi_1$} at 520 70

\endlabellist
\includegraphics[width=.85\textwidth]{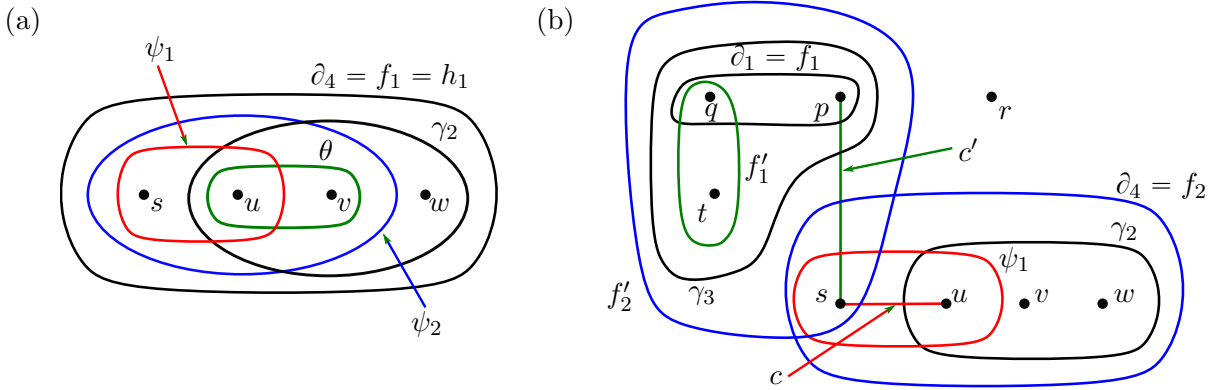}
\caption{A close-up to some curves in Subcase 2b(i).} 
\label{fig_low_22_case2bi}
\end{figure}

To end, we study the curve $f'_2$. For reference, we use the curves and notation from Figure \ref{fig_low_22_case2bi}(b). We now look at the 4-holes sphere $E'$ with boundaries $\gamma_3$, $\gamma_2$, $\partial \eta(r)$, and $\partial \eta(t)$. Since $|\psi_1 \cap \gamma_2|=2$, $\psi_1\cap E'$ is an arc with both endpoints on $\gamma_2$ that separates $s$ from $r$ and $\gamma_3$. Thus, the conditions $\gamma_2\cap f_2=\emptyset$ and $|f'_2\cap f_2|=2$ imply that $\psi_1$ intersects $f'_2$ in two points. 
If $f'_2$ bounds two punctures, we can use the condition $|\psi_1\cap f'_2|=2$ to find a tuple $(c,c')$ of shadows satisfying the condition of Lemma \ref{criterion_destab}, contradicting the fact that $\mc{T}$ is not stabilized.

On the other hand, if $f'_2$ bounds four punctures, we will also find a tuple $(c,c')$ as in Lemma \ref{criterion_destab}. The rest of this paragraph explains how to do this. First observe that $f'_2$ will bound $\gamma_3$ and $s$. Since $f'_1$ is lies inside $\gamma_3$ and intersects $f_1$ in two points, we can assume that $f'_1$ bounds $\{q,t\}$. Both $f'_1$ and $f'_2$ bound compressing disks in $T_{ik}$ so we can find a shadow $c'$ of an arc of $T_{ik}$ connecting $\{p,s\}$ such that $c'$ is disjoint from $f'_1$ and $f'_2$. Inside the disk component of $\Sigma \setminus f'_2$ that contains $\gamma_3$, the condition $|\psi_1\cap f'_2|=2$ implies that $\psi_1$ is an arc with both endpoints in $f'_2$ that separates $s$ from $f'_1$ and $p$. We can slide $c'$ over $f'_1$ and $f'_2$ and assume that $|c'\cap \psi_1|=1$. The last condition allows us to pick an arc $c$ in $\Sigma$ connecting $\{s,u\}$ such that $\partial \eta (c)=\psi_1$ and $c\cap c'=\partial c \cap \partial c'=\{s\}$. Notice that $c'$ is a shadow for arcs in $T_{ij}$ and $T_{jk}$. Hence, the tuple $(c,c')$ satisfies the conditions of Lemma \ref{criterion_destab}. This is a contradiction. 

We are left with $x=f_1 = h_1$ and $\theta=\partial_1$. Since $\partial_4 = f_2=h_2$ is fixed along $\lambda$, A-moves on distinct sides of $\partial_4$ commute. Moreover, this setup is equivalent to the previous case ($\partial_1 = f_1=h_1$): one can reflect Figure \ref{fig_low5_case2b}(b) with respect to $\partial_4$ and the roles of the curves on each side will reverse. Therefore, this case is impossible. 

\textbf{Subcase 2b(ii):} $\partial_2$ and $\partial_3$ enclose one and three punctures, respectively. We use the notation of Figure \ref{fig_low5_case2b}(c). One of the curves $\{\partial_1, x, \partial_4\}$ is equal to $\theta$. 
Observe that, if $\rho$ is a curve such that $\rho\mapsto \partial_4$ is an A-move immediately before $\partial_3 \mapsto\psi_3$, then $\rho$ bounds four punctures. In particular, $\rho\neq \gamma_1$. Thus $\partial_4 \neq \theta$ and so $\partial_4 = f_1 = h_1$. 
Suppose now that $x=\theta$. We can assume that $\gamma_1$ bounds $\{r,p\}$. By Lemma \ref{lem_combinatorics}, the two punctures bounded by $\psi_1$ must be distinct than $\{r,p\}$. Here, notice that $\gamma'_2=\psi_2$ is forced to bound $\{t,u,r\}$ and $\theta=x$ must move after $\gamma_2$. Moreover, $\theta'$ has to bound four punctures, contradicting $\theta'=\psi_1$. 
Hence $x=f_2=h_2$ and $\partial_1 = \theta$.

We are left to discard the case $\partial_1 =\theta$. Since $x$ is fixed along $\lambda$ and $\psi_3$ won't move, we see that two out of the three punctures $\{t,u,r\}$ will be bounded by $\psi_1$. 
We can assume that $\gamma_1$ bounds $\{t,s\}$. By looking at the 4-holed sphere with boundaries $\gamma_3$, $\partial\eta(t)$, $\partial\eta(s)$ and $\partial\eta(u)$, we see that the conditions $\psi_3 \cap \theta=\emptyset$, $|\gamma_3\cap \psi_3|=2$ and $|\theta\cap \gamma_1|=2$ imply $|\gamma_1 \cap \psi_3|=2$. Now, inside the disk of $\Sigma\setminus \psi_3$ containing $\partial_4 = h_1$, one can see that $h'_1\in p^j_{jk}$ must intersect $\gamma_1$ in two points. Thus, there is a shadow $c'$ for an arc of $T_{jk}$ with $\partial \eta(c')=h'_1$ and $|c'\cap \gamma_1|=1$. By taking $c\subset \Sigma$ with $\partial \eta (c) = \gamma_1$, $c\cap c' = \partial c \cap \partial c'=\{s\}$, we obtain a tuple $(c, c')$ like in Lemma \ref{criterion_destab}. Hence, $\T$ is an stabilization. This finishes the analysis in Case 2. 
\end{proof}

\begin{proposition}\label{prop_five_44}
Suppose that both $\gamma_1$ and $\psi_1$ bound four punctures each. Then any path $\lambda(ij)$ from $p^i_{ij}$ to $p^j_{ij}$ must have length at least five. 
\end{proposition}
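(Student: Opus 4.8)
The plan is to follow the template of Propositions~\ref{prop_longer_proposition} and~\ref{prop_five_22}. By Proposition~\ref{lem_new_long_proposition} it suffices to show that no geodesic $\lambda$ of length four runs from $p^i_{ij}$ to $p^j_{ij}$, so I would assume for contradiction that one exists. The first observation is that the hypothesis forces us into Case~3 of Lemma~\ref{lem_combinatorics}: that is the only configuration triple in which the reducing curve $\gamma_1$ for $L_i$ and the reducing curve $\psi_1$ for $L_j$ both bound four punctures. In particular every component of each of $L_i, L_j, L_k$ is in $2$-bridge position, and Remark~\ref{remark_combinatorial_property} applies, so after relabeling $\psi_1$ separates the four punctures enclosed by $\gamma_1$ into two pairs; hence $\gamma_1$ and $\psi_1$ must intersect and cannot enclose the same four punctures. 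I would also record the rigid local pictures of $p^i_{ij}$ and $p^j_{ij}$ coming from the parity analysis of Lemma~\ref{lem_gamma1_first} (Figure~\ref{fig_gamma1_first}): the two cut-reducing curves $\gamma_2,\gamma_3$ lie on opposite sides of $\gamma_1$ and each bounds an odd number of punctures, while $f_1$ and $f_2$ also lie on opposite sides of $\gamma_1$ and bound two punctures each, and symmetrically for $\psi_2,\psi_3,h_1,h_2$ relative to $\psi_1$.

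By Lemmas~\ref{lem_1} and~\ref{lemma_psi_f_distinct} every $\gamma_n$-loop is moved by $\lambda$, so as $\lambda$ has length four there are two cases: in Case~1 exactly one of $f_1,f_2$ is moved, and then $\gamma_1,\gamma_2,\gamma_3$ together with that $f$-curve are each moved exactly once, forming an A-move bijection onto $\{\psi_1,\psi_2,\psi_3,h_n\}$; in Case~2 neither $f_1$ nor $f_2$ moves, so exactly one $\gamma_n$-loop moves twice (its intermediate value $\theta$ being the pivotal curve) and the other two move once. Inside each case I would subdivide according to the position along $\lambda$ of the A-move(s) touching $\gamma_1$, and according to whether $\gamma_1$ moves once or twice, exactly as in Proposition~\ref{prop_five_22}. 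The two engines of the analysis are: (i) a parity count — the endpoints $p^i_{ij},p^j_{ij}$ each have exactly three curves bounding an even number of punctures, each A-move changes this count by $-1$, $0$, or $+1$, and an A-move carried out in a $4$-holed sphere all of whose boundary components bound an odd number of punctures sends an even curve to an even curve; and (ii) the combinatorial constraints of Remark~\ref{remark_combinatorial_property} for reducing curves of distinct links, reinforced by Lemmas~\ref{lem_gamma1_first} and~\ref{lem_gamma1_last} (or, where those are not directly applicable in Case~3, by re-deriving via parity that $\gamma_1\mapsto\psi_n$ and $\gamma_n\mapsto\psi_1$ cannot be the first or last edge of $\lambda$). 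In every surviving subcase one is left with an explicit arrangement of the five curves inside a $4$-holed subsurface of $\Sigma$ in which, using that $\gamma_1$ or $\psi_1$ bounds compressing discs for two of the three tangles, one can produce a shadow arc $c$ for those two tangles and a shadow arc $c'$ for the third sharing exactly one endpoint with $c$ and with interior disjoint from it, contradicting the assumption that $\mc{T}$ is unstabilized via Lemma~\ref{criterion_destab}; the remaining subcases contradict Remark~\ref{remark_combinatorial_property} directly.

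The main obstacle is precisely that the intersection estimates of Lemmas~\ref{lemma_red_intersect} and~\ref{lem_experimental_2}, which drive Propositions~\ref{prop_longer_proposition} and~\ref{prop_five_22}, are unavailable here: they require some $L_n$ to have a component in $1$-bridge position, and in Case~3 none does, so there is no arc $\alpha$ with $\gamma_1=\partial\eta(\alpha)$ and Lee's reduction theorem~\cite{lee2017reduction} cannot be invoked to manufacture the third shadow arc in the easy way. The substitute is the Case~3 rigidity already noted: $\gamma_1$ and $\psi_1$ necessarily meet and share exactly two enclosed punctures, and two curves each bounding a disc with an even number of punctures through which the other passes as parallel separating arcs must meet in a multiple of four points; these facts let one rule out the ``$|\gamma_1\cap\psi_1|=2$'' configurations and force each $\gamma_n$-cut-curve to be carried along $\lambda$ to a $\psi_n$-cut-curve. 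I expect the bulk of the work to be the same lengthy ordered-A-move bookkeeping as in Proposition~\ref{prop_five_22}, reusing the $4$-holed-sphere figures, and I expect the argument to be of comparable length.
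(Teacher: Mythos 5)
Your high-level structure matches the paper's: reduce to ruling out a length-four geodesic via Proposition~\ref{lem_new_long_proposition}, split into Case~1 (one $f$-curve moves) and Case~2 ($f$-curves fixed, with a pivotal curve $\theta$), and run a parity count plus Remark~\ref{remark_combinatorial_property} to constrain the A-move order. You also correctly identify the central obstacle: the hypothesis forces Case~3 of Lemma~\ref{lem_combinatorics}, all components of every $L_n$ are $2$-bridge, and so Lemmas~\ref{lemma_red_intersect}, \ref{lem_experimental_2}, and the Lee-theorem mechanism that supplied the third shadow arc in the other two propositions are all off the table. That diagnosis is exactly right.

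The gap is in your resolution of that obstacle. You first (correctly) observe that there is no arc $\alpha$ with $\gamma_1 = \partial\eta(\alpha)$ because $\gamma_1$ encloses four punctures, and yet a few lines later you claim that in each terminal subcase ``one can produce a shadow arc $c$ for those two tangles'' and invoke Lemma~\ref{criterion_destab}. Those two statements contradict each other: Lemma~\ref{criterion_destab} is precisely the shortcut that requires a \emph{single} arc serving simultaneously as a shadow for $(B_{ij},T_{ij})$ and $(B_{jk},T_{jk})$, and such an arc exists only when the relevant reducing curve bounds \emph{two} punctures. The paper's actual mechanism here is the full generality of Lemma~\ref{gen_criterion_destab}: since $\gamma_1$ bounds four punctures, one must build a length-four chain of shadow arcs $a_1,a_2$ for $T_{ij}$ and $b_1,b_2$ for $T_{ik}$, alternating around the four-punctured disk bounded by $\gamma_1$, slide them so that $a_1\cup b_1\cup a_2\cup b_2$ is a simple closed curve, and then produce a single shadow $c$ for $T_{jk}$ (arising from $h_1'$ or the analogous dual curve) meeting this circle in exactly one puncture. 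Constructing this closed chain — and verifying the disjointness conditions after the slides — is the genuinely new technical work in this proposition relative to Propositions~\ref{prop_longer_proposition} and~\ref{prop_five_22}, and your proposal does not contain it.

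Two smaller points. First, in Case~2 the paper spends significant effort determining the number of punctures the pivotal curve $\theta$ bounds (ruling out two and three, forcing four); your plan does not anticipate that the pivot's own parity becomes a branching variable, which it didn't in the earlier propositions. Second, you rule out $\gamma_1 \mapsto \psi_1$ by a ``multiple of four'' intersection count, while the paper's argument is a curve count: the four boundary loops of the A-move's four-holed sphere would each have to bound two punctures, overloading the budget of at most five even curves in play. Your version is plausible in spirit but is not actually a consequence of the observation you cite (parallel arcs across a four-punctured disk need not come in multiples of four), so this step should be reworked along the paper's lines.
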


\begin{proof}[Proof of Proposition \ref{prop_five_44}]
By Proposition \ref{lem_new_long_proposition}, it is enough to show the distance from $p^i_{ij}$ to $p^j_{ij}$ is not four. By way of contradiction, let $\lambda$ be a geodesic path of length four between such pants decompositions. By Lemmas \ref{lem_1} and \ref{lemma_psi_f_distinct}, each $\gamma_n$-curve must move at least once. 

Notice that if two pants decompositions differ by the A-move $\gamma_1 \mapsto \psi_1$, then each boundary loop of the 4-holed sphere corresponding to this A-move must bound two punctures. This is true because the curves $\gamma_1$ and $\psi_1$ bound compressing disks for the same tangle $T_{ij}$. In particular, 
we know that there are at most five curves bounding an even number of punctures that are involved in $\lambda$, say $\{\gamma_1, \psi_1, h_1, f_1, f_2=h_2\}$ or $\{\gamma_1, \psi_1, \theta, f_1=h_1, f_2=h_2\}$, where $\theta$ is the pivotal curve. Thus it cannot contain the edge $\gamma_1 \mapsto \psi_1$. 

\textbf{Case 1:} $\lambda$ moves one curve of $\{f_1, f_2\}$. Say $f_2=h_2$ is fixed. Notice that $f_1$ bounds two punctures since $\gamma_1$ bounds four. Also, $f_1$ and $\psi_1$ bound compressing disks for the same tangle $T_{ij}$, so the two punctures bounded by $f_1$ must be on the same side of $\psi_1$. Thus, $|f_1\cap \psi_1|$ is divisible by four. This implies that $f'_1 \neq \psi_1$. Similarly, $\gamma'_1\neq h_1$. We can then assume that $\gamma_2 \mapsto \psi_1$ and $\gamma_1\mapsto \psi_2$ are A-moves along $\lambda$. Moreover, by Lemmas \ref{lem_gamma1_first} and \ref{lem_gamma1_last} such A-moves must be in either second or third place. But $\gamma_1 \cap \psi_1\neq \emptyset$ so $\gamma_1\mapsto \psi_2$ must be second and $\gamma_2 \mapsto \psi_1$ is third. 

We now study the 4-holed sphere where the A-move $\gamma_1\mapsto \psi_2$ occurs. We can assume that the curves look like in Figure \ref{fig_low_44}(a). In particular $\partial_1=\gamma_2$ and the sets of curves $\{x, \partial_3, \partial_4\}$ and $\{f_1, h_1, f_2=h_2\}$ agree. 
Since the next A-move is $\gamma_2 \mapsto \psi_1$ we obtain that $\psi_1$ bounds $x$ and $\partial_3$. From Figure \ref{fig_gamma1_first} we know that the reducing curve $\gamma_1$ (resp. $\psi_1$) must separate $f_1$ and $f_2$ (resp. $h_1$ and $h_2$). This implies that $x=f_1$, $\partial_3 = f_2=h_2$ and $\partial_4 = h_1$. 
\begin{figure}[h]
\centering
\labellist \small\hair 2pt
\pinlabel {(a)} at -20 130
\pinlabel {$\gamma_1$} at -5 70
\pinlabel {$\psi_2$} at 220 65
\pinlabel {$\gamma_2$} at 30 73
\pinlabel {$x$} at 75 105
\pinlabel {$\partial_3$} at 145 115

\pinlabel {(b)} at 240 130
\pinlabel {$\theta$} at 255 115
\pinlabel {$\gamma_1$} at 420 25
\pinlabel {$\partial_4$} at 300 25
\pinlabel {$\partial_3$} at 300 120

\pinlabel {(c)} at 440 130
\pinlabel {$\theta$} at 570 125
\pinlabel {$\partial_1$} at 550 100
\pinlabel {$x$} at 515 87
\pinlabel {$\gamma_1$} at 660 65
\pinlabel {$\partial_4$} at 565 15
\pinlabel {$z$} at 600 22

\tiny\pinlabel {$f=h$} at 340 20
\tiny\pinlabel {$f=h$} at 340 120

\endlabellist
\includegraphics[width=.9\textwidth]{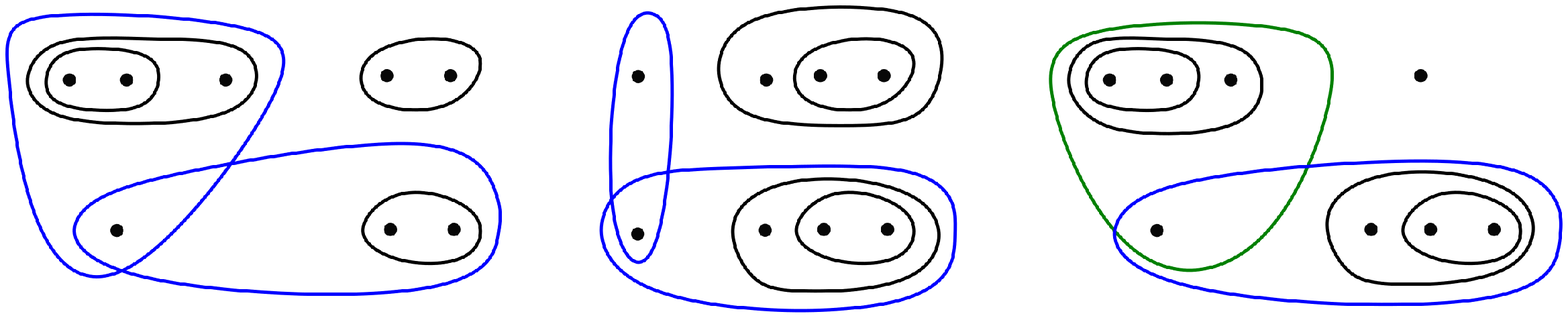}
\caption{Curve arrangements for specific A-moves.} 
\label{fig_low_44}
\end{figure}

To end this case, we will analyze the possible shadows of the tangles $T_{ij}$, $T_{ik}$ and $T_{jk}$. Figure \ref{fig_low_44_ending}(a) contains the labels of the punctures and the new shadows described throughout this paragraph. 
Notice that $h'_1$ bounds two punctures, say $\{s,t\}$. By looking at the 4-holed sphere with boundaries $\psi_2$, $s$, $t$ and $u$, we can conclude that $h'_1$ must intersect $\gamma_1$ in two points. In particular, there is a shadow $c$ of an arc in $T_{jk}$ connecting $\{s,t\}$ such that $\partial \eta (c)=h'_1$. Since $|h'_1\cap \gamma_1|=2$, we see that $c$ intersects $\gamma_1$ once. 
Now focus in the disk component of $\Sigma \setminus \gamma_1$ containing $\gamma_2$. Since $f_1$ and $\gamma_1$ bound compressing disks for $T_{ij}$, there are shadows $a_1$, $a_2$ for arcs of $T_{ij}$ that are disjoint from $f_1\cup \gamma_1$ satisfying $\partial\eta (a_1)=f_1$ and $a_2$ connects $\{r,s\}$. Notice that $f_1$ and $h'_1$ are in opposite sides of $\gamma_2$, so $a_1 \cap c =\emptyset$. Moreover, we can think of $a_2$ as an arc in a 4-holed sphere with boundaries $x=f_1$, $\gamma_1$, $\partial \eta(s)$ and $\partial \eta(r)$, where $a_2$ and $c$ are arcs connecting $\{r,s\}$ and $\{s, \gamma_1\}$, respectively. We can slide $a_2$ over $f_1$ and $\gamma_1$ and still obtain a shadow arc for $T_{ij}$. Thus, we can slide $a_2$ inside this 4-holed sphere and choose $a_2$ to have interior disjoint from $c$; i.e., $a_2 \cap c = \partial a_2 \cap \partial c=\{s\}$. 
To end, we observe that $f'_1$ bounds two punctures and is inside $\gamma_2$. We can assume that $f'_1$ bounds $
\{q, r\}$. Since $f'_1$ and $\gamma_1$ bound compressing disks for $T_{ik}$, we can find shadows $b_1$, $b_2$ for arcs in $T_{ik}$ disjoint from $f'_1$ and $\gamma_1$ satisfying $\partial \eta (b_1) = f'_1$ and $b_2$ connects $\{p,s\}$. Since $|f_1 \cap f'_1|=2$, we can choose $b_1$ so that $b_1 \cap a_1=\partial b_1 \cap \partial a_1=\{q\}$. As we did with $a_2$, we can slide $b_2$ over $f'_1$ and $\gamma_1$ until $b_2$ has interior disjoint from $c$. 
We can further slide $a_2$ and $b_2$ and see that $a_1\cup b_1 \cup a_2 \cup b_2$ can be chosen to be a simple closed curve (ignoring the punctures). 
The tuple $(\alpha, \beta, \gamma)=(\{a_1, a_2\}, \{b_1, b_2\}, c)$ satisfies the conditions of Lemma \ref{gen_criterion_destab}, concluding that $\mc{T}$ is an stabilization. 

\textbf{Case 2:} $\lambda$ fixes $\{f_1, f_2\}$. 
Suppose first that $\gamma'_1 = \psi_2$. From Figure \ref{fig_low_44}(a), we note that before the A-move $\gamma_1 \mapsto \psi_2$ there are four curves bounding even number of punctures say $\{\gamma_1, x,\partial_3, \partial_4\}$. Since $\gamma_1\cap \psi_1 \neq \emptyset$, $f_1=h_1$, and $f_2 = h_2$, the mentioned A-move is impossible. Thus $\gamma'_1 \neq \psi_2, \psi_3$. Similarly, we see that $\psi_1 \neq \gamma'_2, \gamma'_3$. We have already established that $\gamma'_1$ cannot be equal to $\psi_1$. Thus, the only option is $\gamma'_1 = \theta$ and $\theta'=\psi_1$. In particular $\gamma'_2 = \psi_2$ and $\gamma'_3 = \psi_3$. 

We now study how many punctures $\theta$ bounds. First note that $\theta$ cannot bound three punctures. This holds because, before the A-move $\gamma_1\mapsto \theta$, there would be three other curves bounding an even number of punctures (set $\psi_2=\theta$ in Figure \ref{fig_low_44}(a)). This is impossible since only five curves can bound even number of punctures $\{\gamma_1, \psi_1, \theta, f_1=h_1, f_2=h_2\}$, and $\psi_1$ and $\theta$ intersect $\gamma_1$. If $\theta$ bounds two punctures, the curves in $\Sigma$ will look as in Figure \ref{fig_low_44}(b). If $\theta$ moves immediately after $\gamma_1$, then three out of the four punctures bounded by $\gamma_1$ will be in the same side of $\psi_1=\theta$, contradicting Remark \ref{remark_combinatorial_property}. If cut-curve moves before $\theta$, we can assume is $\gamma_2=\partial_3$. Since $\gamma'_2$ bounds a cut disk, it is forced to bound $\theta$ together with one other puncture. This implies that $\theta'=\psi_1$ will bound two punctures, a contradiction. 
\begin{figure}[h]
\centering
\labellist \small\hair 2pt 
\pinlabel {(a)} at -15 180 
\pinlabel {$p$} at 30 130
\pinlabel {$q$} at 85 130
\pinlabel {$r$} at 140 130
\pinlabel {$s$} at 80 25
\pinlabel {$u$} at 230 35
\pinlabel {$t$} at 205 35
\pinlabel {$a_1$} at 55 113
\pinlabel {$b_1$} at 120 113
\pinlabel {$f_1$} at 55 148
\pinlabel {$f'_1$} at 120 148
\pinlabel {$\gamma_2$} at 20 162
\pinlabel {$\gamma_1$} at 25 50
\pinlabel {$\psi_2$} at 270 60
\pinlabel {$\partial_3$} at 255 133
\pinlabel {$h_1$} at 180 50
\pinlabel {$c$} at 140 25
\pinlabel {$b_2$} at 67 75
\pinlabel {$a_2$} at 105 75
 
\pinlabel {(b)} at 300 180 
\pinlabel {$p$} at 360 130 
\pinlabel {$q$} at 390 130 
\pinlabel {$r$} at 445 130 
\pinlabel {$s$} at 500 130 
\pinlabel {$t$} at 380 35 
\pinlabel {$u$} at 475 33 
\pinlabel {$v$} at 530 33 
\pinlabel {$w$} at 590 33 
\pinlabel {$a_1$} at 565 33
\pinlabel {$b_1$} at 500 33
\pinlabel {$f_2$} at 600 60
\pinlabel {$f'_2$} at 480 15
\pinlabel {$a_2$} at 450 33
\pinlabel {$b_2$} at 500 73
\pinlabel {$\gamma_1$} at 610 80
\pinlabel {$\theta$} at 480 150
\pinlabel {$\gamma_2$} at 450 160
\pinlabel {$\gamma_2'$} at 330 80
\pinlabel {$h_1$} at 372 154 
\pinlabel {$c$} at 377 80

\endlabellist
\includegraphics[width=.9\textwidth]{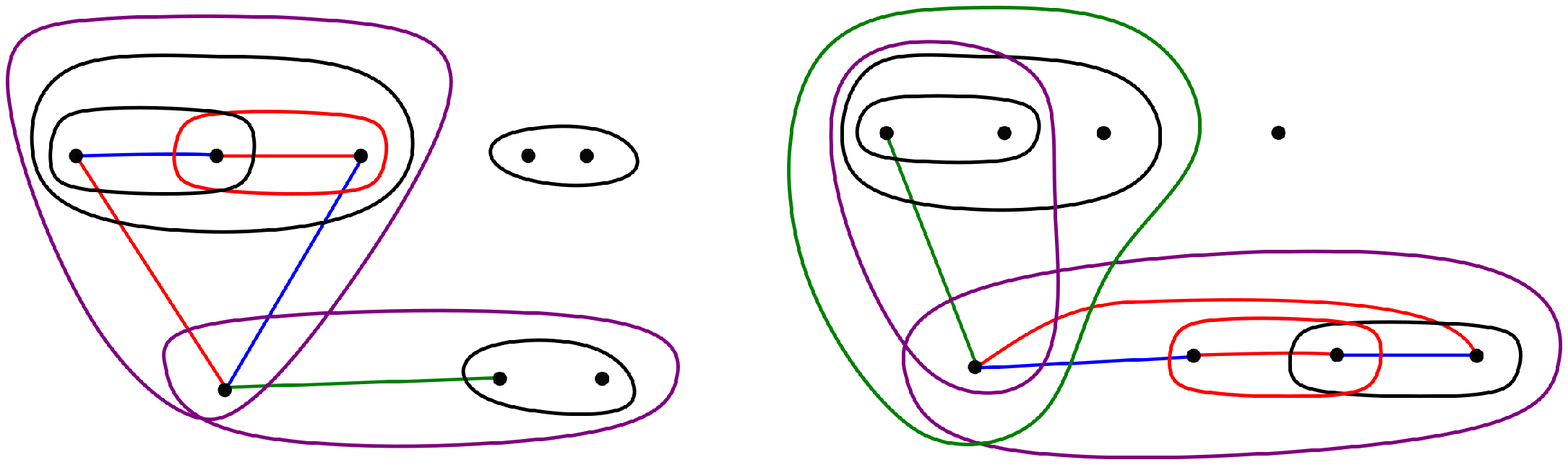}
\caption{Shadows.} 
\label{fig_low_44_ending}
\end{figure}

The only remaining option is if $\theta$ bounds four punctures. Since only $\{f_1=h_1,f_2=h_2\}$ are curves disjoint from $\gamma_1$ that bound an even number of punctures, we can draw the curves in $\Sigma$ before the A-move $\gamma_1 \mapsto \theta$ as in Figure \ref{fig_low_44}(c). Moreover, we can assume $x=f_1=h_1$ and  $z=f_2=h_2$. 
Recall that $f_1=h_1$ and $f_2=h_2$ lie in different sides of both $\gamma_1$ and $\psi_1$ (see Figure \ref{fig_gamma1_first}). Thus, by Remark \ref{remark_combinatorial_property}, since $\gamma_1$ bounds $t$, $u$ and $f_2=h_2$, we conclude that $\psi_1$ bounds $r$, $s$ and $f_2=h_2$. But $\partial_1$ bounds $h_1$ and $r$ which are on distinct sides of $\psi_1$. 
Thus $\partial_1 \not \in \{ \psi_2, \psi_3\}$. Similarly, $\partial_4 \not \in \{\psi_2, \psi_3\}$. We can then assume that $\partial_1 = \gamma_2$ and $\partial_4 = \gamma_3$. 
Since $\theta$ separates $\{r,t\}$ from $\{s,u\}$, we see that $\gamma_2$ moves before $\theta$. Also, $\gamma'_2 = \psi_2$ will bound $t$ and $f_1=h_1$. The A-move $\gamma_2 \mapsto \psi_2$ occurs inside a 4-holed sphere with boundaries $f_1=h_1$, $\partial \eta(r)$, $\partial \eta (t)$ and $\theta$. Here, $\gamma_1$ is an arc with both endpoints in $\theta$ that separates $t$ from $f_1=h_1$ and $\partial \eta(r)$. Thus, since $\gamma_2 \cap \gamma_1=\emptyset$, the condition $|\gamma_2 \cap \psi_2|=2$ is equivalent to $|\psi_2 \cap \gamma_1|=2$. 
Now, inside $\psi_2$, we can assume that the curve $h'_1$ bounds $\{p,t\}$. Again, the condition $|\gamma_1 \cap \psi_2|=2$ implies that $|h'_1 \cap \gamma_1|=2$. In particular, there is a shadow $c$ of an arc in $T_{jk}$ connecting $\{p,t\}$ such that $\partial \eta(c) = h'_1$. The condition $|h'_1 \cap \gamma_1|=2$ implies that $c$ intersects $\gamma_1$ once. 
Focus on the disk component of $\Sigma \setminus \gamma_1$. Here, the arc $c$ is an arc with endpoints in $\gamma_1$ and $\{t\}$. We can repeat the argument in Case 1 and find shadows $a_1$, $a_2$ for arcs in $T_{ji}$ and $b_1$, $b_2$ for arcs in $T_{ik}$ as in Figure \ref{fig_low_44_ending}(b). One of the key properties we obtain is that $a_1 \cup b_1 \cup a_2 \cup b_2$ is a simple closed curve (ignoring the punctures) disjoint from $\gamma_1$ and that intersects $c$ in the puncture $\{t\}$. Then the tuple $(\alpha, \beta, \gamma)=(\{a_1, a_2\}, \{b_1, b_2\}, c)$ satisfies the conditions of Lemma \ref{gen_criterion_destab}, concluding that $\mc{T}$ is an stabilization. 
\end{proof}

\begin{theorem}\label{thm_lower_bound}
Let $\mc{T}$ be a $(4,2)$-bridge trisection for a knotted connected surface $S$ in $S^4$. Then $$L(\mc{T})\geq 15.$$ 
\end{theorem}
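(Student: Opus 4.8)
The plan is to assemble Propositions \ref{lem_new_long_proposition}, \ref{prop_longer_proposition}, \ref{prop_five_22} and \ref{prop_five_44}, after first reducing to the case where $\mc{T}$ is irreducible and unstabilized. Since $S$ carries a $(4;2,2,2)$-trisection, $\chi(S) = c_1 + c_2 + c_3 - \b(\mc{T}) = 2$, so $S$ is a $2$-sphere; as $S$ is knotted, the Meier--Zupan theorem that closed surfaces of bridge number at most $3$ are unknotted (cited in Lemma \ref{reduciblenonmin}) forces $\b(S) = 4 = \b(\mc{T})$, so $\mc{T}$ has minimal bridge number. Lemma \ref{reduciblenonmin} (applicable since $\b(S) = 4 \le 7$) then shows $\mc{T}$ is irreducible, and Lemma \ref{stabnonmin} shows $\mc{T}$ is unstabilized --- precisely the standing hypotheses under which the four propositions above were proved.

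Now fix $\{i,j,k\} = \{1,2,3\}$ and arbitrary efficient defining pairs $(p^i_{ij}, p^i_{ik})$ and $(p^j_{ij}, p^j_{jk})$, written as in Lemma \ref{lemma_pants_structure}: $p^i_{ij} = \{\gamma_1, \gamma_2, \gamma_3, f_1, f_2\}$ with $\gamma_1$ a reducing curve for $L_i$, and $p^j_{ij} = \{\psi_1, \psi_2, \psi_3, h_1, h_2\}$ with $\psi_1$ a reducing curve for $L_j$. A reducing curve bounds a compressing disc, hence encloses an even number of the $8$ punctures of $\Sigma$, and being essential it encloses at least $2$ and at most $4$ of them; so each of $\gamma_1$ and $\psi_1$ encloses exactly $2$ or $4$ punctures. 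Up to the symmetry that swaps the indices $i$ and $j$ (the distance $d(p^i_{ij},p^j_{ij})$ is symmetric in $i,j$), this leaves three cases: $\gamma_1$ and $\psi_1$ both enclose $2$ punctures, handled by Proposition \ref{prop_five_22}; both enclose $4$, handled by Proposition \ref{prop_five_44}; one encloses $2$ and the other $4$, handled by Proposition \ref{prop_longer_proposition}. In every case $d(p^i_{ij}, p^j_{ij}) \ge 5$.

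Applying this to each of the three pairs $\{1,2\}$, $\{2,3\}$, $\{3,1\}$, any choice of efficient defining pairs entering Definition \ref{def_L_inv} satisfies
\[
d(p^1_{12}, p^2_{12}) + d(p^2_{23}, p^3_{23}) + d(p^1_{31}, p^3_{31}) \ge 5 + 5 + 5 = 15,
\]
and minimizing over all such choices yields $\mc{L}(\mc{T}) \ge 15$. Essentially all of the real work --- the case analysis of length-$4$ geodesics in $\mc{P}(\Sigma)$, each ruled out by producing destabilizing shadow arcs via Lemmas \ref{criterion_destab} and \ref{gen_criterion_destab} together with Lee's reduction results --- is already contained in the four propositions; the theorem is then a short bookkeeping step, whose only subtlety is the initial reduction to the irreducible, unstabilized setting.
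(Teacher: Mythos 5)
Your proof is correct and takes essentially the same route as the paper: reduce to the irreducible, unstabilized setting, then apply Propositions \ref{prop_longer_proposition}, \ref{prop_five_22}, and \ref{prop_five_44} to the three possible puncture-counts of the two reducing curves. The only cosmetic differences are that you invoke Lemmas \ref{reduciblenonmin} and \ref{stabnonmin} after first pinning down $\b(S)=\b(\mc{T})=4$ (the paper argues the connected/distant-sum dichotomy directly), and you derive the trichotomy on $(\gamma_1,\psi_1)$ by elementary puncture counting rather than by citing Lemma \ref{lem_combinatorics}; both choices are sound and lead to the same case split.
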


\begin{proof}[Proof of Theorem \ref{thm_lower_bound}]
We first observe that $\mc{T}$ is unstabilized and irreducible. If $\mc{T}$ was stabilized, then $\b(S) \leq 3$. By \cite[Theorem 1.8]{meier2017bridge}, $S$ is unknotted, contradicting our assumption. If $\mc{T}$ is reducible, then by \cite{blair2020kirby}, it is either the distant sum or connected sum of two other trisections. In the former case, this would imply that $F$ is disconnected, a contradiction. In the latter case, the two other trisections have bridge numbers $b_1, b_2\geq 2$ and $b_1 + b_2 - 1 = 4$. Thus, $b_1, b_2 \leq 3$. Again by \cite[Theorem 1.8]{meier2017bridge}, this means both surfaces being trisected are unknotted and so $S$, being their connected sum, is also unknotted. 

Let $(p_{ij}^i, p_{ik}^i)$ for $\{i,j,k\} = \{1,2,3\}$ be choices of efficient pairs so that
\[
\mc{L}(\mc{T}) = d(p_{12}^1, p_{12}^2) + d(p_{13}^1, p_{13}^3)+ d(p_{23}^2, p_{23}^3)
\]

By Lemma \ref{lem_combinatorics}, the reducing curves of $p^i_{ij}$ and $p^j_{ij}$ either (1) bound two and four punctures each, (2) both bound two punctures, or (3) both bound four punctures. Propositions \ref{prop_longer_proposition}, \ref{prop_five_22}, \ref{prop_five_44} state that $d(p^i_{ij}, p^j_{ij})\geq 5$ on each case. Hence
$\Lcal(\mc{T})$ is at least $5+5+5=15$
\end{proof}

\begin{corollary}\label{cor_lower_2b}
Let $K\neq U$ be a 2-bridge knot in $S^3$. 
The spun $\mc{S}(K)$ satisfies $$\mc{L}(S(K))\geq 15.$$ 
\end{corollary}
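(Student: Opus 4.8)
The plan is to deduce this directly from Theorem \ref{thm_lower_bound} together with the structural results on spun $2$-bridge knots. First I would record that, since $K$ is a $2$-bridge knot, $\mathcal{S}(K)$ is a connected surface in $S^4$ — indeed a $2$-sphere, being a spun knot — and that $K \neq U$ forces $\mathcal{S}(K)$ to be (topologically) knotted. This last point is exactly the observation already invoked in the proof of Theorem \ref{thm_spun_are_42}: the meridional rank of $\mathcal{S}(K)$ equals the meridional rank of $K$, which is $2$ by \cite{boileau1989pi} since $K$ is nontrivial, so $\mathcal{S}(K)$ cannot be unknotted.

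Next I would invoke Theorem \ref{thm_spun_are_42}: because $\mathfrak{b}(K) = 2$, we have $\mathfrak{b}(\mathcal{S}(K)) = 4$, and every $(4; c_1, c_2, c_3)$-bridge trisection of $\mathcal{S}(K)$ has $c_1 = c_2 = c_3 = 2$; that is, every bridge trisection of $\mathcal{S}(K)$ of minimal bridge number is a $(4,2)$-bridge trisection. By Definition \ref{def_L_inv}, $\mathcal{L}(\mathcal{S}(K))$ is the minimum of $\mathcal{L}(\mathcal{T})$ taken over all bridge trisections $\mathcal{T}$ of $\mathcal{S}(K)$ with $\mathfrak{b}(\mathcal{T}) = \mathfrak{b}(\mathcal{S}(K)) = 4$, hence over a (nonempty) family of $(4,2)$-bridge trisections of the knotted connected surface $\mathcal{S}(K)$.

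Finally, applying Theorem \ref{thm_lower_bound} to each such $\mathcal{T}$ gives $\mathcal{L}(\mathcal{T}) \geq 15$, and taking the minimum over all of them yields $\mathcal{L}(\mathcal{S}(K)) \geq 15$, as claimed.

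There is essentially no hard step here: the corollary is a packaging of Theorem \ref{thm_lower_bound}. The only point requiring a sentence of care is verifying the two hypotheses of Theorem \ref{thm_lower_bound} — that $\mathcal{S}(K)$ is knotted and connected, and that the trisections over which the minimum defining $\mathcal{L}(\mathcal{S}(K))$ is taken really are $(4,2)$-bridge trisections — and both are immediate from Theorem \ref{thm_spun_are_42} and the knottedness remark above. (If one wished to avoid citing \cite{boileau1989pi}, one could alternatively note that $\mathcal{S}(K)$ is knotted because $\pi_1(S^4 \setminus \mathcal{S}(K)) \cong \pi_1(S^3 \setminus K)$ is nonabelian for $K \neq U$.)
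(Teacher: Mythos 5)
Your proof is correct and follows the same route as the paper: invoke Theorem \ref{thm_spun_are_42} to see that every minimal bridge trisection of $\mathcal{S}(K)$ is a $(4,2)$-bridge trisection of a knotted connected surface, then apply Theorem \ref{thm_lower_bound}. You merely spell out the knottedness/connectedness hypotheses a bit more explicitly than the paper does, which is a reasonable addition.
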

\begin{proof}
From Theorem \ref{thm_spun_are_42}, if $\mc{T}$ is a minimal $(b, c_1, c_2, c_2)$-bridge trisection of $S(K)$ then $b=4$ and $c_1=c_2=c_3=2$. By Theorem \ref{thm_lower_bound}, $\mc{L}(\mc{T})\geq 15$. 
\end{proof}

\section{Upper bounds for $\Lcal$-invariant of spun knots}

The goal of this section is to build an upper bound for $\Lcal(S(K))$ in terms of the bridge splitting for $K$. Through out this section, $K$ will denote a knot in $b$-bridge position $K=T^+_K\cup T^-_K$ and $\mathcal{T}_{MZ}$ is the $(3b-2,b)$-bridge trisection for the spun of $K$ from Section \ref{section_spun_knots}. 

\begin{example}[$\Lcal$-invariant of spun trefoil]\label{example_L_trefoil}
When $K$ is the trefoil knot, the triplane diagrams from Section \ref{section_spun_knots} give us the links $L_i=T_{ij}\cup \T_{ik}$ in Figure \ref{fig_spun_trefoil_1}. In the same figure, we find particular choices for efficient defining pairs $(p^i_{ij}, p^i_{ik})$ for the link $L_i$ which have bounded distance $d\left(p^i_{ij}, p^j_{ij}\right)\leq 5$ (Figure \ref{fig_spun_trefoil_2}). Thus, $\Lcal(S(K))\leq 15$. One can observe that such paths resemble a particular path in the four punctured sphere (Figure \ref{fig_spun_trefoil_2}(d)). The main idea of this section is to formalize the resemblance and use it to build a general upper bound in Theorem \ref{thm_upper_bbridges}.
\begin{figure}[h!]
\centering
\labellist \small\hair 2pt 
\pinlabel {$T_{12}$} at 40 290
\pinlabel {$\T_{13}$} at 40 220
\pinlabel {$p^1_{12}$} at 00 170
\pinlabel {$p^1_{13}$} at 00 80

\pinlabel {$T_{13}$} at 230 290
\pinlabel {$\T_{23}$} at 230 220
\pinlabel {$p^3_{13}$} at 220 170
\pinlabel {$p^3_{23}$} at 220 80

\pinlabel {$T_{23}$} at 455 290
\pinlabel {$\T_{12}$} at 450 220
\pinlabel {$p^2_{23}$} at 425 170
\pinlabel {$p^2_{12}$} at 425 80

\endlabellist
\includegraphics[width=.8\textwidth]{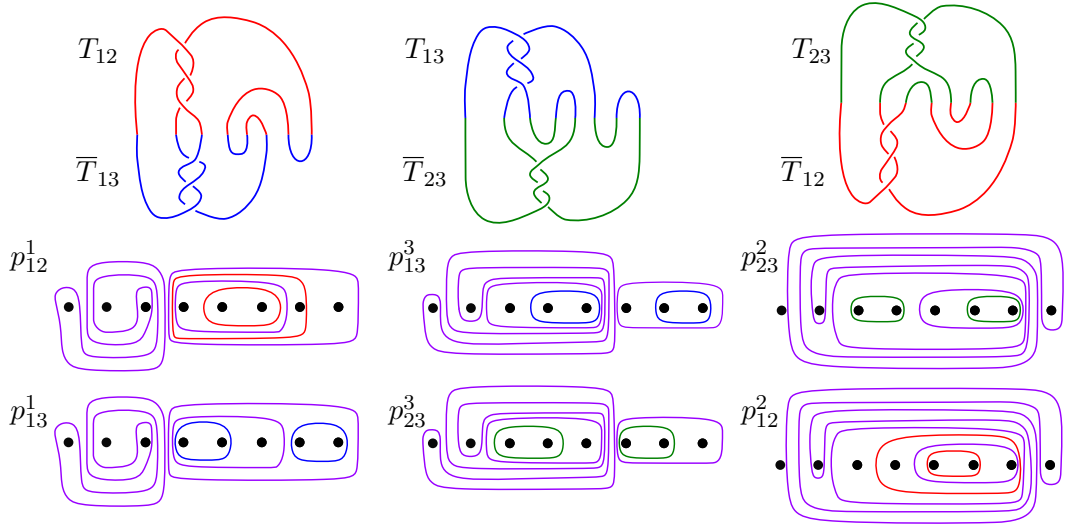}
\caption{Bridge positions and efficient defining pairs for the links $L_i=T_{ij}\cup \T_{ik}$.}
\label{fig_spun_trefoil_1}
\end{figure}
\begin{figure}[h!]
\centering
\labellist \small\hair 2pt 
\pinlabel {(a)} at -5 425
\pinlabel {$p^1_{12}$} [b] at 180 410
\pinlabel {$p^2_{12}$} [b] at 180 1

\pinlabel {(b)} at 220 425
\pinlabel {$p^3_{13}$} [b] at 400 410
\pinlabel {$p^1_{13}$} [b] at 400 20

\pinlabel {(c)} at 460 425
\pinlabel {$p^3_{23}$} [b] at 600 410
\pinlabel {$p^2_{23}$} [b] at 620 1

\pinlabel {(d)} at 680 425
\pinlabel {A} [b] at 855 300

\endlabellist
\includegraphics[width=.95\textwidth]{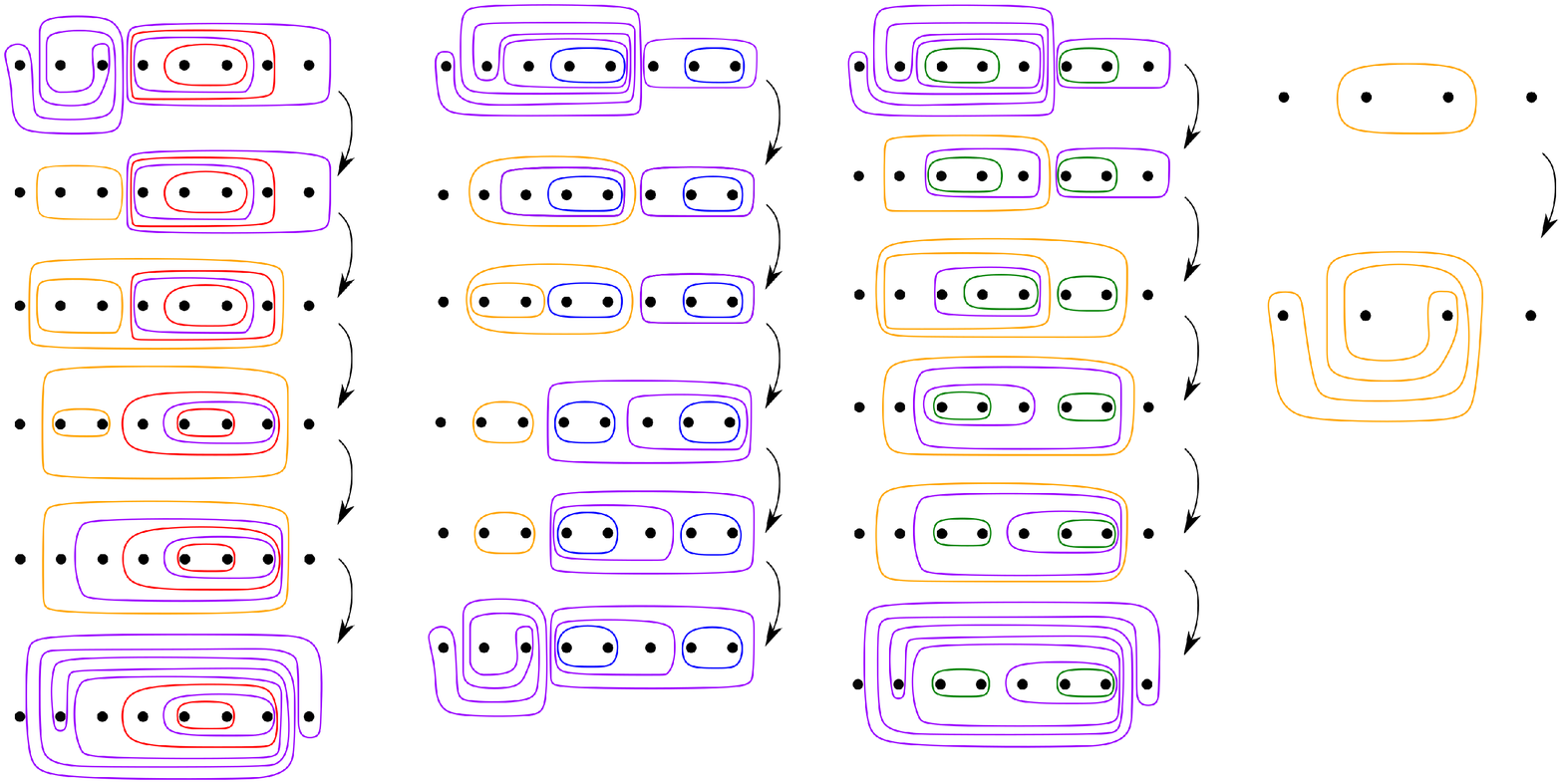}
\caption{Three paths of length five between $p^i_{ij}$ and $p^j_{ij}$.}
\label{fig_spun_trefoil_2}
\end{figure}
\end{example}

Recall that a link $L=L_+ \cup L_-$ in bridge position is perturbed if there is a pair of bridge disks (one on each side) intersecting once in one puncture. This notion is equivalent to the existence of a pair of compressing disks (one per tangle) with boundaries $f_+$ and $f_-$ such that: (1) each $f_\pm$  bounds two punctures, (2) $f_+$ and $f_-$ bound one common puncture, and (3) $|f_+\cap f_-|=2$. Observe that if $c_\pm$ is the shadow for the bridge disk in the perturbation, then $f_\pm = \partial \eta c_\pm$. 

A \textbf{perturbation system} is a finite collection of perturbation pairs $\{(c^n_-, c^n_+)\}_n$ with pairwise disjoint interiors such that $\bigcup_n (c^n_+\cup c^n_-)$ contains no circles in the bridge surface. In other words, it is a collection of perturbations that can be undone at the same time. Figure \ref{fig_three_links} contains examples of perturbation systems. As submanifolds of the bridge surface, the loops $\partial \eta \left( \bigcup_n (c^n_+ \cup c^n_-) \right)$ bound disks c-disks for $L$ in both sides. We will refer to these curves (resp. spheres) in the bridge surface (resp. $S^3$) as \textbf{sensor} curves (resp. spheres) since they allow us to think of $L$ as a link with lower bridge number. 
\begin{figure}[h!]
\centering
\labellist \small\hair 2pt 
\pinlabel {$12$} at 5 240 
\pinlabel {$\overline{13}$} at 5 10
\pinlabel {$\beta$} at 180 185
\pinlabel {$\overline\beta$} at 180 40

\pinlabel {$13$} at 690 240 
\pinlabel {$\overline{23}$} at 690 10
\pinlabel {$\beta$} at 550 185
\pinlabel {$\overline\beta$} at 550 40

\pinlabel {$23$} at 1070 240 
\pinlabel {$\overline{12}$} at 1070 10
\pinlabel {$\beta$} at 930 185
\pinlabel {$\overline\beta$} at 930 40

\endlabellist
\includegraphics[width=.9\textwidth]{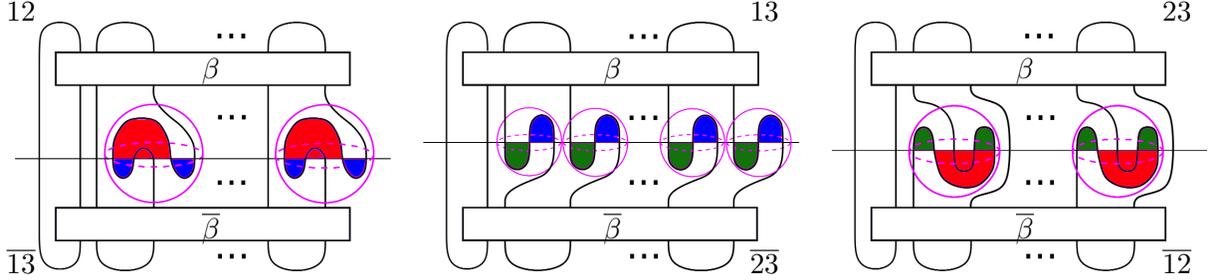}
\caption{Bridge presentations for the links $L_{\eps, \bar\delta} = T_{\eps} \cup \T_\delta$.}
\label{fig_three_links}
\end{figure}

For the $b$-bridge links in Figure \ref{fig_three_links}, the perturbation systems will determine two  simplicial maps between pants complexes $\Pcal(\Sigma_{2b}) \ra \Pcal(\Sigma_{6b-4})$. The main idea of the upper bound for $\Lcal(\mc{T}_{MZ})$ is to induce paths in $\Pcal(\Sigma_{6b-4})$ using information from the splitting of the knot $K$.

Fix $(\eps, \delta, \rho)$ to be a cyclic permutation of the labels $(12, 13, 23)$. Focus on the link $L_{\eps, \bar{\delta}}= T_\eps \cup \T_{\bar{\delta}}$ and the perturbation system in Figure \ref{fig_three_links}. If we shrink the sensor spheres to a point by collapsing the 3-ball containing the perturbation disks, we obtain a link isotopic to $L_{\eps, \bar{\delta}}$ in $b$-bridge position. At the level of the bridge surfaces, this collapsing induces a continuous map between the punctured spheres $g_{\eps, \bar\delta}: \Sigma_{6b-4} \ra \Sigma_{2b}$. Given a pants decomposition $p\in \Pcal(\Sigma_{2b})$, define the following sets of curves $G^\pm_{\eps, \bar \delta}(p) = g^{-1}_{\eps, \bar \delta} (p) \cup \mu^{\pm}_{\eps, \bar \delta} \cup \phi_{\eps, \bar \delta}$, where $\mu^{\pm}_{\eps, \bar\delta}$ and $\phi_{\eps, \bar\delta}$ are collections of curves described in Figure \ref{fig_the_map_G}. By construction, both $G^\pm_{\eps, \bar\delta}(p)$ are a pants decompositions of $\Sigma_{6b-4}$. Furthermore, the functions $\{G^\pm_{\eps, \bar\delta}\}_{(\eps, \bar\delta)}$ satisfy several properties described in the following lemma.
\begin{figure}[h!]
\centering
\labellist \small\hair 2pt 
\pinlabel {$K$:} at -20 237  
\pinlabel {$\left(12,\overline{13}\right)$:} at -40 190  
\pinlabel {$\left(13,\overline{23}\right)$:} at -40 115  
\pinlabel {$\left(23,\overline{12}\right)$:} at -40 42  

\pinlabel {$1$} at 12 247  
\pinlabel {$2$} at 39 247  
\pinlabel {$3$} at 70 247  
\pinlabel {$4$} at 104 247  

\pinlabel {$5$} at 102 190 
\pinlabel {$5$} at 190 190 
\pinlabel {$5$} at 257 190 
\pinlabel {$3$} at 102 115 
\pinlabel {$3$} at 190 115 
\pinlabel {$3$} at 67 115 
\pinlabel {$3$} at 155 115 
\pinlabel {$3$} at 257 115 
\pinlabel {$5$} at 70 42 
\pinlabel {$5$} at 160 42
\pinlabel {$5$} at 257 42 

\pinlabel {$\mu^+_{12,\overline{13}}$} at 455 245 
\pinlabel {$\phi_{12,\overline{13}}$} at 365 245
\pinlabel {$\mu^-_{12,\overline{13}}$} at 500 140

\pinlabel {$\mu^-$} at 355 115
\pinlabel {$\mu^+$} at 425 115
\pinlabel {$\phi_{13,\overline{23}}$} at 320 85

\pinlabel {$\mu^-_{23,\overline{12}}$} at 500 95
\pinlabel {$\mu^+_{23,\overline{12}}$} at 425 -05
\pinlabel {$\phi_{23,\overline{12}}$} at 545 100

\tiny\pinlabel {$2b-1$} at 160 247 
\pinlabel {$2b-2$} at 200 247

\endlabellist
\includegraphics[width=.8\textwidth]{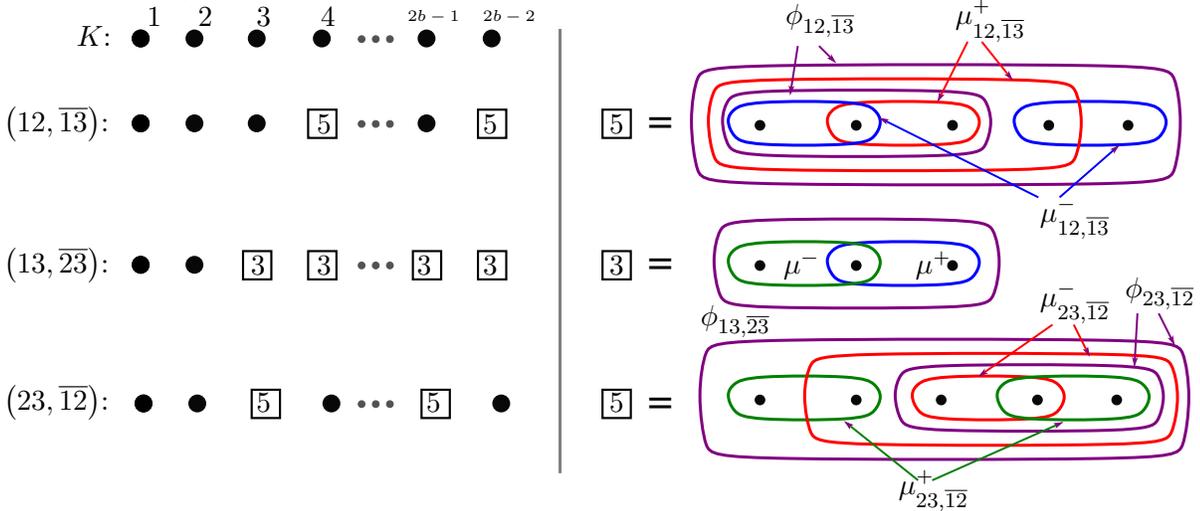}
\caption{Curves that complete $G^\pm_{\eps, \bar\delta}$, we removed the indices in the right side of the figure.}
\label{fig_the_map_G}
\end{figure}
\begin{lemma}\label{lem_props_G_map}
Let $(\eps, \delta, \rho)$ be a cyclic permutation of $(12,13,23)$ and let $p_0$ and $p_1$ be any two pants decompositions of $\Sigma_{2b}$. The following holds: 
\begin{enumerate}
\item $G^\pm_{\eps, \bar\delta}: \Pcal(\Sigma_{2b}) \ra \Pcal(\Sigma_{6b-4})$ is a 1-simplicial map; in other words, if $\lambda\subset \Pcal(\Sigma_{2b})$ is a path from $p_0$ to $p_1$, then $G^\pm_{\eps, \bar\delta}(\lambda)$ is a path connecting $G^\pm_{\eps, \bar\delta}(p_0)$ and $G^\pm_{\eps, \bar\delta}(p_1)$. 
\item If every loop in $p_0$ bounds a c-disk in $T^+_K$, then the tuple $\left(G^+_{\eps, \bar\delta}(p_0), G^-_{\eps, \bar\delta}(p_0)\right)$ is an efficient pair for the link $T_{\eps} \cup \T_{\delta}$.
\item If every loop in $p_1$ bounds a compressing disk for $T^-_K$, then the distance in $\Pcal(\Sigma_{6b-4})$ between $G^+_{\eps,\bar\delta}(p_1)$ and $G^-_{\rho, \bar\eps}(p_1)$ is $2(b-1)$. 
\end{enumerate}
\end{lemma}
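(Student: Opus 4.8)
The plan is to prove all three parts by unwinding the two ingredients behind $G^{\pm}_{\eps,\bar\delta}$: the collapsing map $g_{\eps,\bar\delta}\colon\Sigma_{6b-4}\ra\Sigma_{2b}$, which is a homeomorphism away from the finitely many disks it crushes (those bounded by the sensor curves), and the $p$-independent families $\mu^{\pm}_{\eps,\bar\delta}$, $\phi_{\eps,\bar\delta}$ of completing curves drawn in Figure \ref{fig_the_map_G}. For the sharp distances in (2) and (3) I will also use that for a $b$-component unlink in $(3b-2)$-bridge position, such as $L_{\eps,\bar\delta}$, the distance between the disk sets of its two sides equals $(3b-2)-b=2b-2$; the nontrivial half is the lower bound, which follows from Lemma \ref{lem_efficient_pairs} together with the (routine) existence of efficient pairs.

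For (1): an edge of $\Pcal(\Sigma_{2b})$ from $p$ to $p'$ is an A-move $c\mapsto c'$ inside a $4$-holed sphere $P$ cut off by the remaining curves of $p$. I would first isotope $c$, $c'$ and $\partial P$ off the points crushed by $g_{\eps,\bar\delta}$, so that $g^{-1}_{\eps,\bar\delta}(c)$ and $g^{-1}_{\eps,\bar\delta}(c')$ are genuine simple closed curves, and observe that the sensor curves lying over the crush points of $P$ already belong to $G^{\pm}_{\eps,\bar\delta}(p)$; cutting $g^{-1}_{\eps,\bar\delta}(P)$ along them leaves a $4$-holed sphere — a complementary region of $G^{\pm}_{\eps,\bar\delta}(p)\setminus\{g^{-1}_{\eps,\bar\delta}(c)\}$ — inside which $g^{-1}_{\eps,\bar\delta}(c)\mapsto g^{-1}_{\eps,\bar\delta}(c')$ is a single A-move. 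Since $\mu^{\pm}_{\eps,\bar\delta}$ and $\phi_{\eps,\bar\delta}$ do not move with $p$, the decompositions $G^{\pm}_{\eps,\bar\delta}(p)$ and $G^{\pm}_{\eps,\bar\delta}(p')$ differ by exactly that A-move, so $G^{\pm}_{\eps,\bar\delta}$ carries edges to edges and is $1$-simplicial; applying this to each edge of a path $\lambda$ yields the path $G^{\pm}_{\eps,\bar\delta}(\lambda)$.

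For (2): if every curve of $p_0$ bounds a c-disk in $T^+_K$, then its $g_{\eps,\bar\delta}$-preimage bounds a c-disk in $(B_{\eps},T_{\eps})$ (push the disk across the collapse, using that $T_{\eps}$ is obtained from $T^+_K$ by reversing the collapse and adjoining trivial strands); each curve of $\mu^{+}_{\eps,\bar\delta}$ is $\partial\eta$ of a shadow arc of $T_{\eps}$ coming from the perturbation system and so bounds a c-disk in $T_{\eps}$; and the curves of $\phi_{\eps,\bar\delta}$ bound c-disks in $T_{\eps}$ by inspection of Figure \ref{fig_the_map_G}. Hence $G^{+}_{\eps,\bar\delta}(p_0)\in\mathcal{D}_{\eps}$, and the mirror argument gives $G^{-}_{\eps,\bar\delta}(p_0)\in\mathcal{D}_{\delta}$. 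These two decompositions share $g^{-1}_{\eps,\bar\delta}(p_0)\cup\phi_{\eps,\bar\delta}$ and differ only in $\mu^{+}_{\eps,\bar\delta}$ versus $\mu^{-}_{\eps,\bar\delta}$; since $|\partial\eta(c^n_+)\cap\partial\eta(c^n_-)|=2$ for each perturbation pair and the perturbations have disjoint interiors, the $2b-2$ A-moves $\partial\eta(c^n_+)\mapsto\partial\eta(c^n_-)$ live in disjoint $4$-holed spheres and give a path of length $2b-2$; by the lower bound recalled above this is geodesic, so $(G^{+}_{\eps,\bar\delta}(p_0),G^{-}_{\eps,\bar\delta}(p_0))$ is an efficient pair for $T_{\eps}\cup\T_{\delta}$.

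For (3): both $G^{+}_{\eps,\bar\delta}(p_1)$ and $G^{-}_{\rho,\bar\eps}(p_1)$ lie in $\mathcal{D}_{\eps}$ (orientation reversal does not change the disk set, so $\T_{\bar\eps}$ also has disk set $\mathcal{D}_{\eps}$). The heart of the matter — and the step I expect to be the main obstacle — is the claim that when every curve of $p_1$ bounds a \emph{compressing} disk for $T^-_K$, the curve systems $g^{-1}_{\eps,\bar\delta}(p_1)\cup\phi_{\eps,\bar\delta}$ and $g^{-1}_{\rho,\bar\eps}(p_1)\cup\phi_{\rho,\bar\eps}$ are isotopic in $\Sigma_{6b-4}$; intuitively, such curves can be isotoped into the part of the trisection surface coming from $T^-_K$, where the two perturbation systems — hence the two collapsing maps — agree, and making this precise requires a careful reading of the MZ trisection and of Figure \ref{fig_the_map_G} (it is the uniform-in-$b$ form of the coincidence visible directly in Example \ref{example_L_trefoil}). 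Granting the claim, $G^{+}_{\eps,\bar\delta}(p_1)$ and $G^{-}_{\rho,\bar\eps}(p_1)$ share all their curves except the $2b-2$ sensor curves $\mu^{+}_{\eps,\bar\delta}$, respectively $\mu^{-}_{\rho,\bar\eps}$, which enclose different punctures and so are not isotopic to any curve of the other decomposition; hence any path between them moves at least $2b-2$ curves, while the $2b-2$ disjointly supported A-moves carrying $\mu^{+}_{\eps,\bar\delta}$ to $\mu^{-}_{\rho,\bar\eps}$ (exactly as in (2)) give a path of that length, so the distance is exactly $2(b-1)$.
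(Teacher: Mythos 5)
Your treatments of Parts (1) and (2) track the paper's argument closely and look fine: Part (1) is the expected fleshing-out of ``follows from the definition,'' and Part (2) matches the paper's strategy of observing that every curve in $G^\pm_{\eps,\bar\delta}(p_0)$ bounds a c-disk on the appropriate side, pairing up $\mu^+$ with $\mu^-$ to produce a length-$(2b-2)$ path, and invoking Lemma \ref{lem_efficient_pairs} for efficiency.

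However, Part (3) contains a genuine error, and it is precisely at the step you flag as ``the heart of the matter.'' Your central claim is that $g^{-1}_{\eps,\bar\delta}(p_1)\cup\phi_{\eps,\bar\delta}$ and $g^{-1}_{\rho,\bar\eps}(p_1)\cup\phi_{\rho,\bar\eps}$ are isotopic, with the $2b-2$ curves $\mu^+_{\eps,\bar\delta}$ versus $\mu^-_{\rho,\bar\eps}$ providing the discrepancy. This is backwards. The $\mu$'s are boundaries of shadows of bridge disks from the perturbation systems; since $G^+_{\eps,\bar\delta}$ and $G^-_{\rho,\bar\eps}$ both extract data from the \emph{same} tangle $(B_\eps, T_\eps)=(\bar B_\eps,\T_\eps)$, and the perturbation systems of Figure \ref{fig_three_links} are set up so that their bridge disks on the $\eps$-side coincide, one has $\mu^+_{\eps,\bar\delta}=\mu^-_{\rho,\bar\eps}$. (Also, calling the $\mu$'s ``sensor curves'' conflates them with $\phi$; the paper reserves ``sensor'' for the $\phi$-loops.) Meanwhile, $\phi_{\eps,\bar\delta}$ and $\phi_{\rho,\bar\eps}$ come from the sensor loops of \emph{different} links $L_{\eps,\bar\delta}$ and $L_{\rho,\bar\eps}$ and, in fact, share no common curve; the $2(b-1)$ A-moves in the paper's argument occur inside the ``$B$-loops'' of $p_1$ and carry $\phi_{\eps,\bar\delta}$ to $\phi_{\rho,\bar\eps}$, with $g^{-1}(p_1)$ and all the $\mu$'s unmoved. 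So the two pants decompositions agree on $g^{-1}(p_1)\cup\mu$ and differ exactly on $\phi$ --- the opposite of what you assert --- and your proposed identification would force a distance of $0$ once you correctly recognize that the $\mu$'s coincide. Beyond getting the roles of $\mu$ and $\phi$ reversed, you explicitly defer the proof of the isotopy claim; as that claim is false, the deferral conceals the gap rather than just postponing a routine check.
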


\begin{proof}
Part 1 follows from the definition of $G^\pm_{\eps, \bar\delta}$. 
In order to prove Part 2, we first observe that $G^+_{\eps, \bar\delta}(p_0)$ and $G^-_{\eps, \bar\delta}(p_0)$ are pants decompositions with looks bounding c-disks in $T_\eps$ and $T_\delta$, respectively. The loops in $\mu^\pm_{\eps, \bar\delta}$ arise from perturbation pairs and the ones in $\phi_{\eps, \bar\delta}$ from sensor loops (see Figure \ref{fig_three_links}). Thus they bound c-disks. The extra assumption in $p_0$ implies that $g^{-1}_{\eps, \bar\delta}(p_0)$ also bounds c-disks. 
Next, one can see from Figure \ref{fig_the_map_G} that the loops in $\mu^+_{\eps, \bar\delta}$ and $\mu^-_{\eps, \bar\delta}$ can be paired so that they intersect in two points and are disjoint from the rest. Thus, there is a path in $\Pcal(\Sigma_{6g-4})$ of length $2b-2$. Lemma \ref{lem_efficient_pairs} concludes that $\left(G^+_{\eps, \bar\delta}(p_0), G^-_{\eps, \bar\delta}(p_0)\right)$ is an efficient pair. 

We will now discuss Part 3. Label the punctures in the bridge sphere for $K$ as in the left side of Figure \ref{fig_the_map_G}. In particular, since every loop in $p_1$ bonds a compressing disk for $T^-_K$, we get that the pairs of punctures $\{2n-1, 2n\}$ belong to the same component of $\Sigma_{2b}\setminus p_1$ for $n=1, \dots, b$. We denote such collection of loops by $B\subset p_1$. After an isotopy of the bridge surface for $K$, which changes the surface by a homeomorphism fixing the punctures, we can assume that the loops in $B$ look as in Figure \ref{fig_outer_path}. 
Observe that this isotopy of $K$ does not affect the class of bridge trisection $\mathcal{T}_{MZ}$; more precisely, it changes the triplane diagrams by a pure braid. We can then consider the pants decompositions $G^+_{\eps, \bar \delta}(p_1)$ and $G^-_{\rho, \bar \eps}(p_1)$ and see that the loops in $g^{-1}_{\eps, \bar\delta}(p_1)$ and $g^{-1}_{\rho, \bar\eps}(p_1)$ agree. We also observe that the loops in $\mu^+_{\eps, \bar\delta}$ and $\mu^-_{\rho, \bar\delta}$ are the same since their corresponding bridge disks agree (see Figure \ref{fig_three_links}). To end, we can perform the length two path of A-moves described by Figure \ref{fig_outer_path} near each loop in $B$ ($b-1$ times), and find a path in $\Pcal(\Sigma_{6b-4})$ replacing the loops $\phi_{\eps, \bar\delta}$ by the loops $\phi_{\rho, \bar\eps}$. Thus  the distance in $\Pcal(\Sigma_{6b-4})$ between $G^+_{\eps,\bar\delta}(p_1)$ and $G^-_{\rho, \bar\eps}(p_1)$ is at most $2(b-1)$. Since the sets of curves $\phi_{\eps, \bar\delta}$ and $\phi_{\rho, \bar\eps}$ have no common curve, we conclude that this path is minimal length. 
\end{proof}
\begin{figure}[h!]
\centering
\labellist \small\hair 2pt 
\pinlabel {$\Sigma_{2b}$} at 130 340 
\pinlabel {$1$} at 173 335
\pinlabel {$2$} at 187 335
\pinlabel {$3$} at 231 335
\pinlabel {$4$} at 243 335
\pinlabel {$2b$} at 328 336
\pinlabel {$B$} at 155 355
\pinlabel {$B$} at 225 355
\pinlabel {$B$} at 305 355

\pinlabel {$\left( 12, \overline{13}\right)^+$} at -7 275
\pinlabel {$B$} at 120 275
\pinlabel {$B$} at 120 5
\pinlabel {$\left( 23, \overline{12}\right)^-$} at -7 5

\pinlabel {$\left( 23, \overline{12}\right)^+$} at 350 275
\pinlabel {$B$} at 210 275
\pinlabel {$B$} at 210 5
\pinlabel {$\left( 13, \overline{23}\right)^-$} at 350 5

\pinlabel {$\left( 13, \overline{23}\right)^+$} at 545 275
\pinlabel {$B$} at 410 275
\pinlabel {$B$} at 410 5
\pinlabel {$\left( 12, \overline{13}\right)^-$} at 545 5

\endlabellist
\includegraphics[width=.7\textwidth]{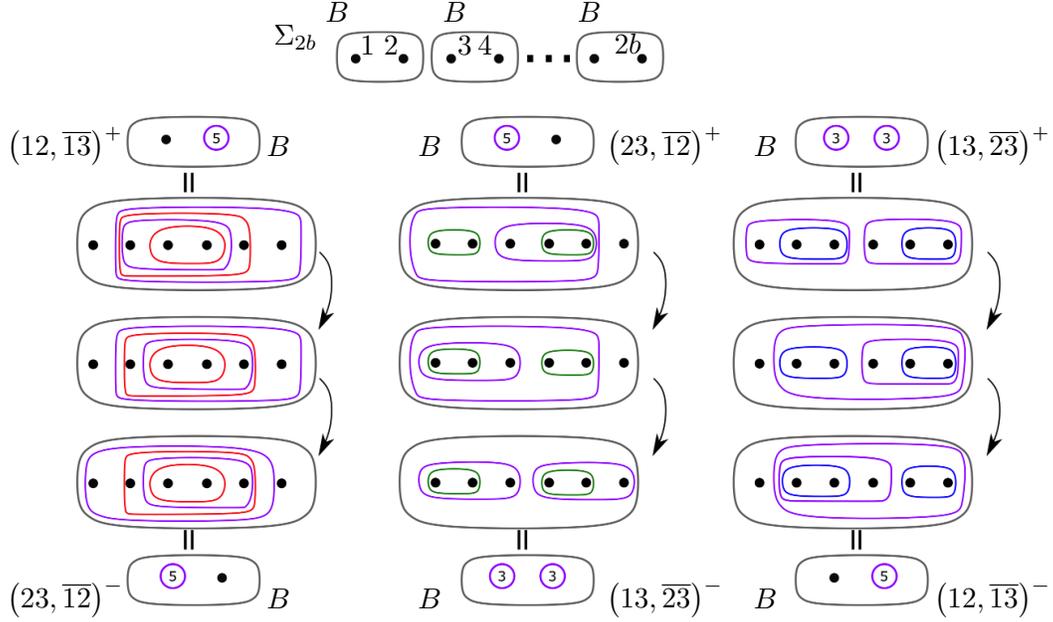}
\caption{If we perform the sequence of A-moves inside each component of $B$, we obtain paths of length $2(b-1)$ connecting $\phi_{\eps, \bar\delta} \mapsto \phi_{\rho, \bar\eps}$.}
\label{fig_outer_path}
\end{figure}

Motivated by Lemma \ref{lem_props_G_map}, for a trivial $N$-tangle $T$, we define $\Pcal_{comp}(T)$ and $\Pcal_{c}(T)$ to be the sets of pants decompositions $p\in \Pcal(\Sigma_{2N})$ such that all loops in $p$ bound compressing disks and c-disks, respectively. The upper bound in the following Theorem can be summarized in Figure \ref{fig_outer_path}. 

\begin{theorem}\label{thm_upper_bbridges}
Let $K=T^+_K \cup T^-_K$ be a knot in $b$-bridge position and let $\mathcal{T}_{MZ}$ be the $(3b-2,b)$-bridge trisection for the spun 2-knot $S(K)\subset S^4$. Let $d\geq 0$ be the distance in $\Pcal(\Sigma_{2b})$ between the sets $\Pcal_c(T^+_K)$ and $\Pcal_{comp}(T^-_K)$. Then 
\[ \Lcal(\mathcal{T}_{MZ})\leq 6(d+b-1).\]
\end{theorem}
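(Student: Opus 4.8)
\textbf{Proof plan for Theorem \ref{thm_upper_bbridges}.}

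The plan is to exhibit, for a suitable choice of efficient defining pairs for the three links $L_1,L_2,L_3$ of $\mathcal{T}_{MZ}$, three paths in $\Pcal(\Sigma_{6b-4})$ whose lengths add up to at most $6(d+b-1)$, and then invoke the definition of $\Lcal$ as a minimum over efficient defining pairs. Concretely, fix a geodesic $\lambda$ in $\Pcal(\Sigma_{2b})$ of length $d$ realizing the distance between $\Pcal_c(T^+_K)$ and $\Pcal_{comp}(T^-_K)$, with endpoints $p_0\in\Pcal_c(T^+_K)$ and $p_1\in\Pcal_{comp}(T^-_K)$. For each cyclic permutation $(\eps,\delta,\rho)$ of $(12,13,23)$, Lemma \ref{lem_props_G_map}(2) says $\bigl(G^+_{\eps,\bar\delta}(p_0),G^-_{\eps,\bar\delta}(p_0)\bigr)$ is an efficient pair for $T_\eps\cup\T_\delta$; these three pairs are what I would feed into Definition \ref{def_L_inv}.

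The heart of the argument is to bound each of the three summands $d\bigl(p^\eps_\eps,p^\eps_{\bar\delta\text{'s partner}}\bigr)$ — more precisely, for each of the three ``sides'' of the trisection triangle, the distance between the two pants decompositions sitting at the two ends of that side — by $2(d+b-1)$. I would build the path on a given side in three stages, following the schematic of Figure \ref{fig_outer_path}: (i) apply the simplicial map $G^+_{\eps,\bar\delta}$ to the geodesic $\lambda$; by Lemma \ref{lem_props_G_map}(1) this is a path of length $\le d$ from $G^+_{\eps,\bar\delta}(p_0)$ to $G^+_{\eps,\bar\delta}(p_1)$; (ii) at $p_1$, use the length-$2(b-1)$ path from Lemma \ref{lem_props_G_map}(3) connecting $G^+_{\eps,\bar\delta}(p_1)$ to $G^-_{\rho,\bar\eps}(p_1)$, which only reshuffles the $\phi$-loops inside the $b-1$ components of $B$; (iii) apply $G^-_{\rho,\bar\eps}$ to $\lambda$ traversed backwards, giving a path of length $\le d$ from $G^-_{\rho,\bar\eps}(p_1)$ back to $G^-_{\rho,\bar\eps}(p_0)$. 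Concatenating, each side gives a path of length at most $d+2(b-1)+d=2d+2(b-1)\le 2(d+b-1)$ — wait, this is $2d+2b-2$, which is $2(d+b-1)$, exactly as needed — between the relevant endpoints $G^-_{\rho,\bar\eps}(p_0)$ and $G^+_{\eps,\bar\delta}(p_0)$, which are precisely the two pants decompositions that this side contributes (the labels $p^\eps_{\eps}$ etc.\ coming from the efficient pairs chosen in stage (i)). Summing over the three cyclic permutations gives $\Lcal(\mathcal{T}_{MZ})\le 3\cdot 2(d+b-1)=6(d+b-1)$.

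The main obstacle, and the point needing genuine care, is the bookkeeping of \emph{which} efficient-pair endpoint sits at each vertex of the trisection triangle, i.e.\ checking that the ``$+$'' decomposition produced by $G^+_{\eps,\bar\delta}$ at one corner is literally the same vertex as the ``$-$'' decomposition $G^-_{\rho,\bar\eps}$ feeding into the next side — this is exactly the compatibility built into Figure \ref{fig_three_links} (the bridge disks for $\mu^+_{\eps,\bar\delta}$ and $\mu^-_{\rho,\bar\delta}$ agreeing) and Figure \ref{fig_the_map_G}, and it is what makes the three paths close up into the triangle rather than merely covering the three sides separately. One must also confirm that in stage (iii) the \emph{reversed} geodesic $\lambda$ is still admissible for the map $G^-_{\rho,\bar\eps}$, which is immediate since $G^-_{\rho,\bar\eps}$ is 1-simplicial and a reversed path is a path, and that stage (ii)'s path genuinely has its endpoints at the decompositions named in stages (i) and (iii) — guaranteed by Lemma \ref{lem_props_G_map}(3) together with the observation there that $g^{-1}_{\eps,\bar\delta}(p_1)$ and $g^{-1}_{\rho,\bar\eps}(p_1)$ agree and the $\mu$-loops match up. Once these identifications are pinned down, the length count is the routine sum above, and the theorem follows by minimality of $\Lcal$ over choices of efficient defining pairs.
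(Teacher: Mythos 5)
Your proposal is correct and is essentially identical to the paper's argument: the paper also fixes a geodesic $\lambda$ from $p_0\in\Pcal_c(T^+_K)$ to $p_1\in\Pcal_{comp}(T^-_K)$, traces the loop in $\Pcal(\Sigma_{6b-4})$ built from the six paths $G^\pm_{\eps,\bar\delta}(\lambda)$ joined by the three length-$2(b-1)$ bridges from Lemma~\ref{lem_props_G_map}(3) (Figure~\ref{fig_upper_bound}), and reads off $\Lcal(\mathcal{T}_{MZ})\le 6d + 3\cdot 2(b-1) = 6(d+b-1)$. Your careful attention to the bookkeeping of which $G^+$/$G^-$ image lands at which vertex of the trisection triangle is exactly the compatibility the paper's figure encodes.
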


\begin{proof}
Let $p_0 \in \Pcal_{c}(T^+_K)$ and $p_1 \in \Pcal_{comp}(T^-_K)$ be pants decompositions realizing the distance $d$, and let $\lambda\subset \Pcal(\Sigma_{6b-4})$ be a geodesic path connecting them. In particular, $p_0$ and $p_1$ satisfy the conclusions of Lemma \ref{lem_props_G_map} for any cyclic permutation $(\eps, \delta, \rho)$ of $(12,13,23)$. Now, consider the loop in $\Pcal(\Sigma_{6b-4})$ described in Figure \ref{fig_upper_bound}. By Lemma \ref{lem_props_G_map}, this loop satisfies the conditions in the definition of $\Lcal(\mathcal{T}_{MZ})$. Since each $G^\pm_{\eps, \bar\delta}(\lambda)$ is a path of length $d$, we can conclude the desired inequality. 
\begin{figure}[h]
\centering
\labellist \small\hair 2pt 

\pinlabel {${T_{12}}$} at 105 195
\pinlabel {{\color{white}$T_{13}$}} at 215 195
\pinlabel {{\color{white}$T_{23}$}} at 160 105

\pinlabel {$G^+_{12,\overline{13}}(p_0)$} at 135 240
\pinlabel {$G^+_{12,\overline{13}}(p_1)$} at 110 305

\pinlabel {$G^-_{12,\overline{13}}(p_0)$} at 255 230
\pinlabel {$G^-_{12,\overline{13}}(p_1)$} at 230 305

\pinlabel {$G^+_{13,\overline{23}}(p_0)$} at 275 180
\pinlabel {$G^+_{13,\overline{23}}(p_1)$} at 330 128

\pinlabel {$G^-_{13,\overline{23}}(p_0)$} at 220 100
\pinlabel {$G^-_{13,\overline{23}}(p_1)$} at 265 28

\pinlabel {$G^+_{23,\overline{12}}(p_0)$} at 100 100
\pinlabel {$G^+_{23,\overline{12}}(p_1)$} at 58 28

\pinlabel {$G^-_{23,\overline{12}}(p_0)$} at 50 180
\pinlabel {$G^-_{23,\overline{12}}(p_1)$} at 28 128

\pinlabel {
\begin{rotate}{55}
$2b-2$
\end{rotate}
}
at -5 240

\pinlabel {
\begin{rotate}{-60}
$2b-2$
\end{rotate}
}
at 320 250

\pinlabel {$2b-2$} at 155 15

\endlabellist
\includegraphics[width=.6\textwidth]{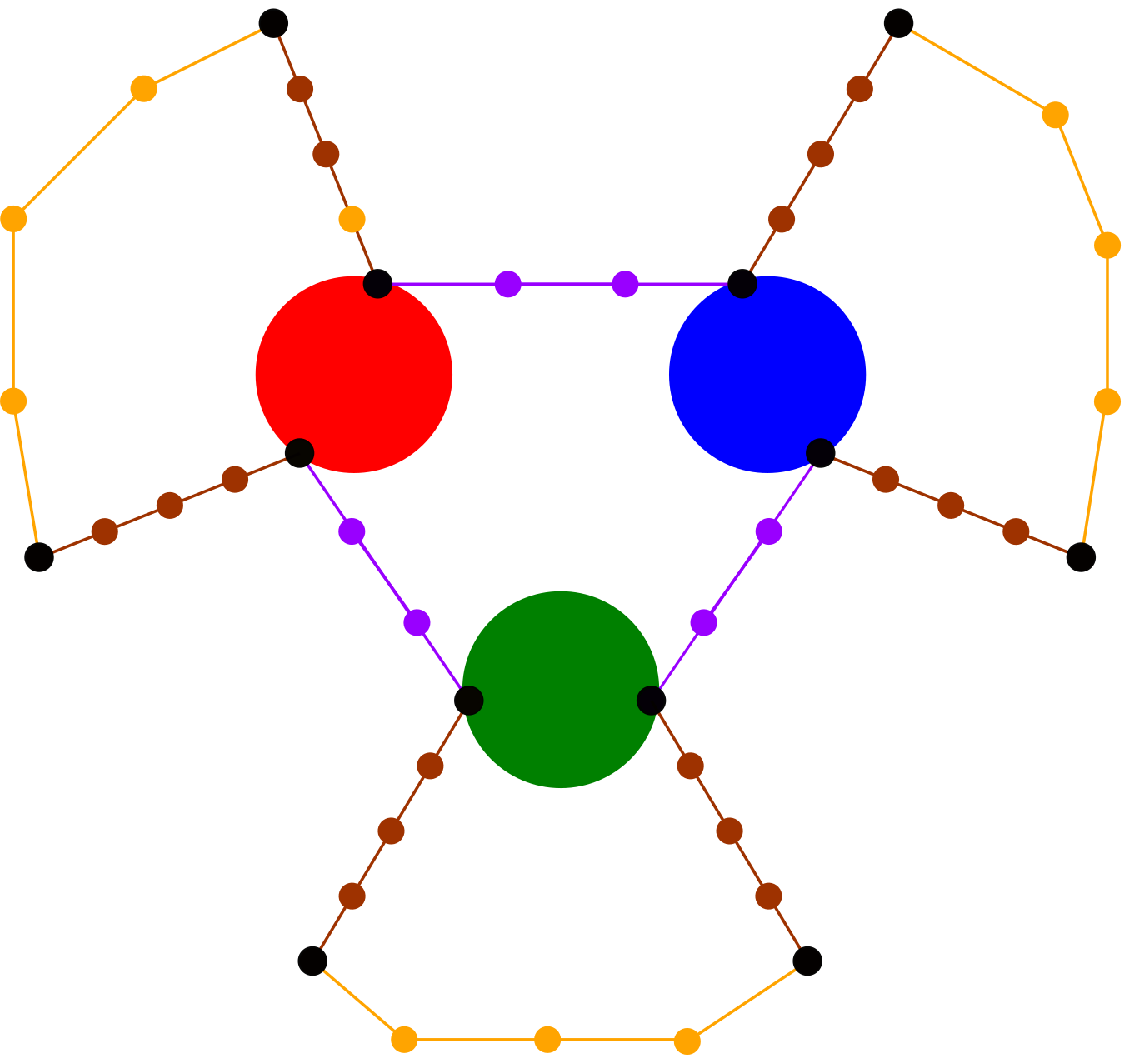}
\caption{Upper bound for $\Lcal(\mathcal{T})$.}
\label{fig_upper_bound}
\end{figure}
\end{proof}

\begin{remark}\label{remark_general_ub}
From 
Figure \ref{fig_upper_bound}, we can derive a more general upper bound for $\Lcal(\mathcal{T}_{MZ})$ as follows: 
If $p_0, p_1\in \Pcal(\Sigma_{2b})$ are pants decompositions with $p_0\in \Pcal_c(T^+_K)$, then 
\[ \Lcal(\mathcal{T}_{MZ})\leq 6d(p_0, p_1) + 
d(G^-_{12,\bar{13}}(p_1), G^+_{13,\bar{23}}(p_1)) +
d(G^-_{13,\bar{23}}(p_1), G^+_{23,\bar{12}}(p_1)) +
d(G^-_{23,\bar{12}}(p_1), G^+_{12,\bar{13}}(p_1)).\]
\end{remark}
The following Corollary studies the distance between $G^+_{\eps,\bar\delta}(p_1)$ and $G^-_{\rho, \bar\eps}(p_1)$ for families of pants decompositions other than $\Pcal_{comp}(T^-_K)$. 
We use Conway's notation \cite{2_bridge_Kauffman, 2_bridge_mulazzani, conway} to describe 2-bridge links. 
The curve in the top of Figure \ref{fig_outer_path} (resp. Figure \ref{fig_outer_path_2}) bounds a compressing disk on both sides of the 2-bridge link with Conway number $0$ (resp. $\infty$). 
The distance below can be computed using continued fraction expansions of $p/q$ \cite{hatcherfarey}. 

\begin{corollary}\label{cor_upper_2bridges}
Let $K\subset S^3$ be a 2-bridge knot with Conway number $p/q$. We have 
\[ \Lcal(\mc{T}_{MZ}) \leq \min \big\{ 6d(p/q,0) + 6,  6d(p/q,\infty) + 9\big\}.\] 
\end{corollary}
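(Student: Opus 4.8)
The plan is to get the two terms of the minimum as two independent applications of the upper-bound machinery already in hand: Theorem~\ref{thm_upper_bbridges} for the first term and the refinement in Remark~\ref{remark_general_ub} for the second. Throughout, $K$ is $2$-bridge, so $b=2$, the bridge sphere is $\Sigma_4$, and $\Pcal(\Sigma_4)$ is the Farey graph: its vertices are slopes, its edges are $A$-moves, and the distance $d(\cdot,\cdot)$ in the statement is the usual Farey distance, which one reads off a continued-fraction expansion of $p/q$. Both $T^+_K$ and $T^-_K$ are trivial $2$-tangles, and such a tangle carries no $c$-disc other than its compressing disc (any disc meeting one strand once has inessential boundary in $\Sigma_4$), so each of $\Pcal_{comp}(T^\pm_K)$ and each of $\Pcal_c(T^\pm_K)$ is a single slope, and $\Pcal_c(T^\pm_K)=\Pcal_{comp}(T^\pm_K)$. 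By the definition of the Conway number we may choose slope coordinates on $\Sigma_4$ so that $\Pcal_c(T^+_K)=\{p/q\}$ and $\Pcal_{comp}(T^-_K)=\{0\}$.

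For the first term I would feed this directly into Theorem~\ref{thm_upper_bbridges}: the quantity $d$ there equals $d\big(\Pcal_c(T^+_K),\Pcal_{comp}(T^-_K)\big)=d(p/q,0)$, and $b=2$, so
\[\Lcal(\mc{T}_{MZ})\le 6\big(d(p/q,0)+b-1\big)=6\,d(p/q,0)+6.\]
Concretely this is the loop of paths in $\Pcal(\Sigma_{6b-4})$ of Figure~\ref{fig_outer_path}, whose $B$-curve is the slope $0$ that bounds compressing discs on both sides of the Conway-$0$ link.

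For the second term I would invoke Remark~\ref{remark_general_ub} with $p_0=p/q\in\Pcal_c(T^+_K)$ and $p_1$ the slope $\infty$ (which need \emph{not} bound compressing discs for $T^-_K$). Then $d(p_0,p_1)=d(p/q,\infty)$, so it remains to bound the three corner distances $d\big(G^-_{12,\bar{13}}(\infty),G^+_{13,\bar{23}}(\infty)\big)$, $d\big(G^-_{13,\bar{23}}(\infty),G^+_{23,\bar{12}}(\infty)\big)$ and $d\big(G^-_{23,\bar{12}}(\infty),G^+_{12,\bar{13}}(\infty)\big)$. I claim each is at most $3$, by the same argument that proves Part~3 of Lemma~\ref{lem_props_G_map} but with the slope $\infty$ put into the standard position of Figure~\ref{fig_outer_path_2} — the position in which $\infty$ bounds compressing discs on both sides of the Conway-$\infty$ link. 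There the loops $g^{-1}_{\eps,\bar\delta}(\infty)$ and the $\mu$-curves of the two cyclic neighbours coincide, and a length-$3$ sequence of $A$-moves supported near the $\infty$-curve carries $\phi_{\eps,\bar\delta}$ to $\phi_{\rho,\bar\eps}$; this is the $b=2$ case of a $\big(2(b-1)+1\big)$-move sequence, the extra move over Lemma~\ref{lem_props_G_map} being exactly the difference between the standard positions of the $0$- and $\infty$-curves relative to the perturbation systems. Summing,
\[\Lcal(\mc{T}_{MZ})\le 6\,d(p/q,\infty)+3+3+3=6\,d(p/q,\infty)+9,\]
and taking the minimum of the two displayed bounds gives the corollary.

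The bookkeeping with the Conway normalization and the fact that a trivial $2$-tangle has a unique $c$-disc slope are routine. The step I expect to be the real obstacle is the analysis of the three corner distances for $p_1=\infty$: one must verify that the advertised length-$3$ $A$-move sequences near the $\infty$-curve genuinely exist (for the upper bound) and, for the estimate to be tight enough that the trefoil comes out to $15$ rather than something larger, that the $\phi$-curve collections $\phi_{\eps,\bar\delta}$ and $\phi_{\rho,\bar\eps}$ involved share no curve, so no shorter sequence is available. Getting the figure-level combinatorics of the $\infty$-configuration exactly right — as opposed to the already-established $0$-configuration of Figure~\ref{fig_outer_path} — is where the care is needed.
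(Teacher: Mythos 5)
Your proof is correct and follows essentially the same route as the paper: the first term via Theorem~\ref{thm_upper_bbridges} with $d=d(p/q,0)$ and $b=2$, the second via Remark~\ref{remark_general_ub} with $p_0=p/q$, $p_1=\infty$, and the three corner distances each bounded by $3$ by inspecting the $\infty$-configuration (the paper's Figure~\ref{fig_outer_path_2}). One small misdirection in your closing paragraph: for an \emph{upper} bound you only need the corner distances to be at most $3$; whether the $\phi$-collections share curves (which would make the distance smaller) is irrelevant — a shorter sequence would only strengthen the inequality, and the trefoil value of $15$ is then pinned down from below by Theorem~\ref{thm_lower_bound}, not by tightness of this estimate.
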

\begin{proof}
For 2-bridge knots, the only curve bounding a compressing disk in $T^-_K$ (resp. $T^+_K$) is the loop of slope $0$ (resp. $p/q$) in the 4-punctured bridge sphere. Furthermore, there are no cut disks for $T^+_K$ since $b$ is small. The first inequality $\Lcal(\mc{T}_{MZ}) \leq 6d(p/q,0) + 6$ follows from Theorem \ref{thm_upper_bbridges}. 

In order to prove the second inequality, we consider $p_1\subset \Pcal(\Sigma_4)$ corresponding to the curve $B\subset \Sigma_4$ with slope $\infty$ in Figure \ref{fig_outer_path_2}. In the same figure, we observe that the distance between the pants decompositions $G^+_{\eps,\bar\delta}(p_1)$ and $G^-_{\rho, \bar\eps}(p_1)$ is bounded by three. By Remark \ref{remark_general_ub}, we conclude $\Lcal(S(K)) \leq 6d(p/q,\infty) + 3\cdot 3$, as desired. 
\begin{figure}[h!]
\centering
\labellist \small\hair 2pt 

\pinlabel {$\Sigma_4$} at 190 390
\pinlabel {$1$} at 250 390
\pinlabel {$2$} at 315 390
\pinlabel {$3$} at 335 390
\pinlabel {$4$} at 380 390
\pinlabel {$B$} at 280 370

\pinlabel {$\left(12,\overline{13}\right)^+$} at 190 340
\pinlabel {$\left(23,\overline{12}\right)^-$} at 190 25

\pinlabel {$\left(23,\overline{12}\right)^+$} at 435 340
\pinlabel {$\left(13,\overline{23}\right)^-$} at 435 25

\pinlabel {$\left(13,\overline{23}\right)^+$} at 665 340
\pinlabel {$\left(12,\overline{13}\right)^-$} at 665 25

\endlabellist
\includegraphics[width=.8\textwidth]{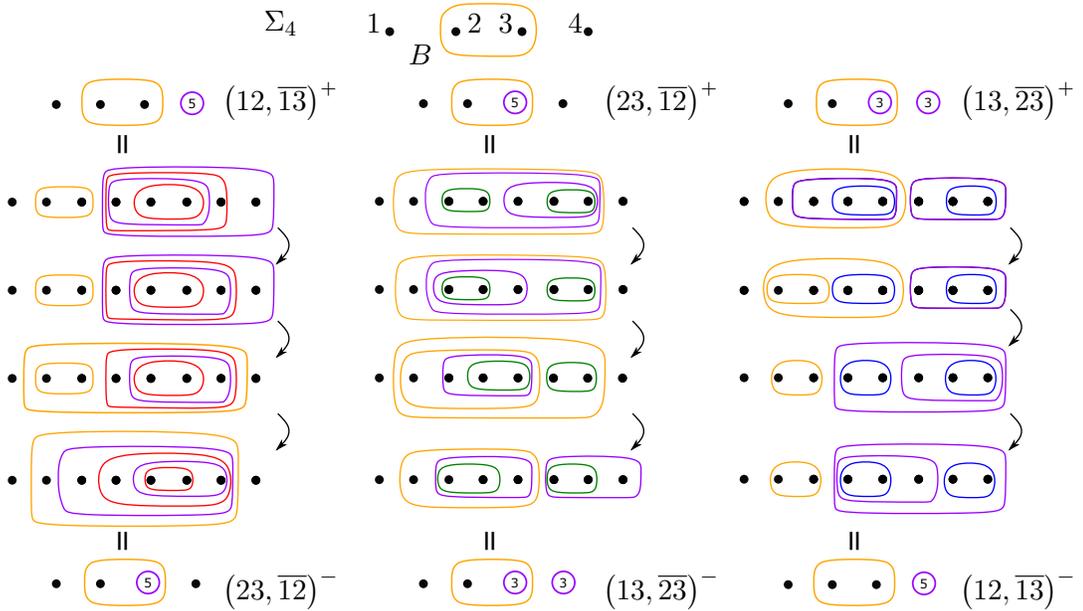}
\caption{Paths of length three between $G^+_{\eps,\bar\delta}(p_1)$ and $G^-_{\rho, \bar\eps}(p_1)$.}
\label{fig_outer_path_2}
\end{figure}
\end{proof}

\bibliographystyle{plain}
{ \small
\bibliography{L_invariant_spun_knots}
}

$\quad$ \\
Jos\'e Rom\'an Aranda, Binghamton University\\
email: \texttt{jaranda@binghamton.edu}\\ 
$\quad$ \\
Puttipong Pongtanapaisan, University of Saskatchewan\\
email: \texttt{puttipong@usask.ca}\\
$\quad$ \\
Scott A. Taylor, Colby College \\ 
email: \texttt{sataylor@colby.edu}\\
$\quad$ \\
Suixin (Cindy) Zhang, Colby College\\
email: \texttt{szhang22@colby.edu}


\end{document}